\theoremstyle{plain}
\newtheorem{theorem}{Theorem}
\newtheorem{conjecture}[theorem]{Conjecture}
\newtheorem{corollary}[theorem]{Corollary}
\newtheorem{lemma}[theorem]{Lemma}
\newtheorem{proposition}[theorem]{Proposition}
\newtheorem*{regularitytheorem*}{Theorem~\ref{thm:regularity}}
\theoremstyle{definition}
\newtheorem{definition}[theorem]{Definition}
\newtheorem{example}[theorem]{Example}
\newtheorem{notation}[theorem]{Notation}
\newtheorem{remark}[theorem]{Remark}
\newtheorem*{notation*}{Notation}
\DeclareMathOperator{\Alphabet}{Alph}
\newcommand{\N}{\mathbb{N}}
\newcommand{\Q}{\mathbb{Q}}
\newcommand{\Z}{\mathbb{Z}}
\newcommand{\word}{\mathbf{w}}
\newcommand{\p}{\mathbf{p}}
\newcommand{\s}{\mathbf{s}}
\newcommand{\z}{\mathbf{z}}
\newcommand{\nequiv}{\mathrel{\not\equiv}}
\newcommand{\ceil}[1]{\left\lceil #1 \right\rceil}
\newcommand{\floor}[1]{\left\lfloor #1 \right\rfloor}
\newcommand{\size}[1]{\lvert #1 \rvert}
\newcommand{\citeseq}[1]{\cite[\href{http://oeis.org/#1}{#1}]{OEIS}}
\newcommand{\seq}[1]{\href{http://oeis.org/#1}{#1}}
\begin{document}

\title{Avoiding $5/4$-powers on the alphabet of nonnegative integers}\thanks{The second-named author is supported by a Francqui Foundation Fellowship of the Belgian American Educational Foundation.}
\author{Eric Rowland}
\author{Manon Stipulanti}
\address{
	Department of Mathematics \\
	Hofstra University \\
	Hempstead, NY \\
	USA
}
\date{May 6, 2020}

\begin{abstract}
We identify the structure of the lexicographically least word avoiding $5/4$-powers on the alphabet of nonnegative integers.
Specifically, we show that this word has the form $\p \, \tau ( \varphi(\z) \varphi^2(\z) \cdots)$ where $\p,\z$ are finite words, $\varphi$ is a $6$-uniform morphism, and $\tau$ is a coding.
This description yields a recurrence for the $i$th letter, which we use to prove
that the sequence of letters is $6$-regular with rank $188$.
More generally, we prove $k$-regularity for a sequence satisfying a recurrence of the same type.
\end{abstract}

\maketitle

\section{Introduction}\label{sec:Introduction}

Avoidance of patterns is a major area of study in combinatorics on words~\cite{Berstel--Perrin}, which finds its origins in the work of Thue~\cite{Berstel,Thue-1906,Thue-1912}.
In particular, lexicographically least words avoiding some patterns have gained interest over the years.
Often, words of interest that avoid a pattern can be described by a morphism.
A \emph{morphism} on an alphabet $\Sigma$ is a map $\mu \colon \Sigma \to \Sigma^*$.
(Here $\Sigma^*$ denotes the set of finite words on $\Sigma$.)
A morphism extends naturally to finite and infinite words by concatenation.
We say that a morphism $\mu$ on $\Sigma$ is \emph{$k$-uniform} if $\lvert \mu(c) \rvert = k$ for all $c \in \Sigma$.
A $1$-uniform morphism is also called a \emph{coding}.
If there exists a letter $c \in \Sigma$ such that $\mu(c)$ starts with $c$, then iterating $\mu$ on $c$ gives a word $\mu^{\omega}(c)$, which is a fixed point of $\mu$ beginning with $c$.
In this paper we index letters in finite and infinite words starting with $0$.

An \emph{overlap} is a word of the form $cxcxc$ where $c$ is a letter.
On a binary alphabet, the lexicographically least overlap-free word is $001001\varphi^\omega(1)$, where $\varphi(0) = 01, \varphi(1) = 10$ is the morphism generating the Thue--Morse word $\varphi^\omega(0)$~\cite{Allouche--Currie--Shallit}.

In the context of combinatorics on words, fractional powers were first studied by Dejean~\cite{Dejean}.
Such a power is a partial repetition, defined as follows.

\begin{definition}
Let $a$ and $b$ be relatively prime positive integers.
If $v = v_0 v_1 \cdots v_{\ell-1}$ is a nonempty word whose length $\ell$ is divisible by $b$, the \emph{$a/b$-power} of $v$ is the word
\[
v^{a/b} := v^{\lfloor a/b \rfloor} v_0 v_1 \cdots v_{\ell \cdot \{a/b\}-1} ,
\]
where $\{a/b\} = a/b - \lfloor a/b \rfloor$ is the fractional part of $a/b$.
\end{definition}

Note that $\lvert v^{a/b} \rvert = \frac{a}{b} \lvert v \rvert$.
If $a/b > 1$, then a word $w$ is an $a/b$-power if and only if $w$ can be written $v^eu$ where $e$ is a positive integer, $u$ is a prefix of $v$, and $\frac{\lvert w \rvert}{\lvert v \rvert} = \frac{a}{b}$.

\begin{example}
The $5/4$-power of the word $0111$ is $(0111)^{5/4} = 01110$.
\end{example}

In general, a $5/4$-power is a word of the form $(xy)^{5/4} = xyx$, where $\size{xy} = 4 \ell$ and $\size{xyx} = 5\ell$ for some $\ell \ge 1$.
It follows that $\size{x} = \ell$ and $\size{y} = 3\ell$.

Elsewhere in the literature, researchers have been interested in words with no $\alpha$-power factors for all $\alpha \ge a/b$.
In this paper, we consider a slightly different notion, and we say that a word is \emph{$a/b$-power-free} if none of its factors is an (exact) $a/b$-power.

\begin{notation*}
Let $a$ and $b$ be relatively prime positive integers such that $a/b > 1$.
Define $\word_{a/b}$ to be the lexicographically least infinite word on $\Z_{\ge 0}$ avoiding $a/b$-powers.
\end{notation*}

Guay-Paquet and Shallit~\cite{Guay--Shallit} started the study of lexicographically least power-free words on the alphabet of nonnegative integers.
They identified the structure of $\word_a$ for each integer $a \geq 2$.
In particular, the lexicographically least $2$-power-free word~\citeseq{A007814}
\[
	\word_2 = 01020103010201040102010301020105 \cdots
\]
is the fixed point of the $2$-uniform morphism $\varphi$ on the alphabet of nonnegative integers defined by $\varphi(n) = 0 (n + 1)$ for all $n\ge 0$.
Additionally they identified the structure of the lexicographically least overlap-free word.
The first-named author and Shallit~\cite{Rowland--Shallit} studied the structure of the lexicographically least $3/2$-power-free word~\citeseq{A269518}
\[
	\word_{3/2} = 0011021001120011031001130011021001140011031 \cdots,
\]
which is the image under a coding of a fixed point of a $6$-uniform morphism.
Let $\Sigma_2$ be the infinite alphabet $\{n_j \colon n\in \Z, 0\le j \le 1\}$ with 2 types of letters.
For example, $0_0$ and $0_1$ are the 2 different letters of the form $0_j$.
Let $\varphi \colon \Sigma_2^* \to \Sigma_2^*$ be the morphism defined by
\begin{align*}
	\varphi(n_0) &= 0_0 0_1 1_0 1_1 0_0 (n + 2)_1 \\
	\varphi(n_1) &= 1_0 0_1 0_0 1_1 1_0 (n + 2)_1
\end{align*}
for all $n\in \Z$, where the subscript $j$ determines the first five letters of $\varphi(n_j)$.
Let $\tau$ be the coding defined by $\tau(n_j)=n$ for all $n_j \in \Sigma_2$.
Then $\word_{3/2}=\tau(\varphi^\omega(0_0))$.
A prefix of this word appears on the left in Figure~\ref{fig:5over4 - 6}.
The letter $0$ is represented by white cells, $1$ by slightly darker cells, and so on.
The first five columns are periodic, and the sixth column satisfies $w(6 i + 5) = w(i) + 2$ for all $i\ge 0$ where $w(i)$ is the $i$th letter of $\word_{3/2}$.

Pudwell and Rowland~\cite{Pudwell--Rowland} undertook a large study of $\word_{a/b}$ for rational numbers in the range $1 < \frac{a}{b} < 2$, and identified many of these words as images under codings of fixed points of morphisms.
The number $\frac{a}{b}$ in this range with smallest $b$ for which the structure of $\word_{a/b}$ was not known is $\frac{5}{4}$.
In this paper, we give a morphic description for the lexicographically least $5/4$-power-free word~\citeseq{A277144}
\[
\word_{5/4} = 00001111020210100101121200001311\cdots.
\]
Let $w(i)$ be the $i$th letter of $\word_{5/4}$.
For the morphic description of $\word_{5/4}$, we need $8$ letters, $n_0, n_1, \dots, n_7$ for each integer $n \in \Z$.
The subscript $j$ of the letter $n_j$ will determine the first five letters of $\varphi(n_j)$, which correspond to the first five columns on the right in Figure~\ref{fig:5over4 - 6}.
The definition of $\varphi$ below implies that these columns are eventually periodic with period length $1$ or $4$.

\begin{figure}
	\includegraphics[width=.07\textwidth]{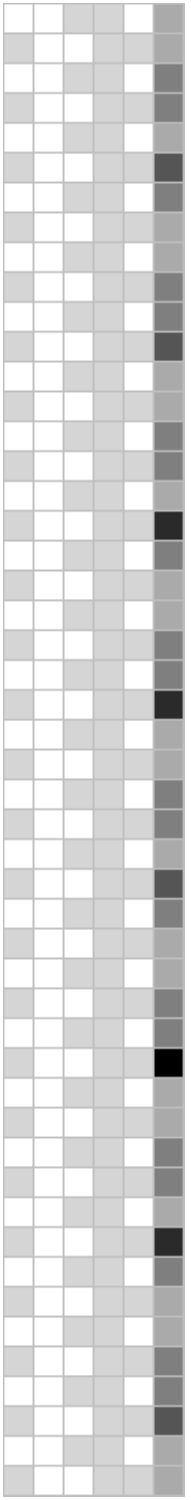}
	\qquad \qquad
	\includegraphics[width=.07\textwidth]{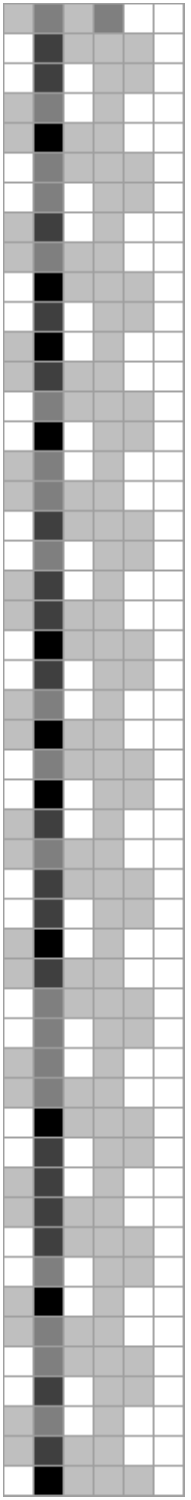}
	\qquad \qquad
	\includegraphics[width=.07\textwidth]{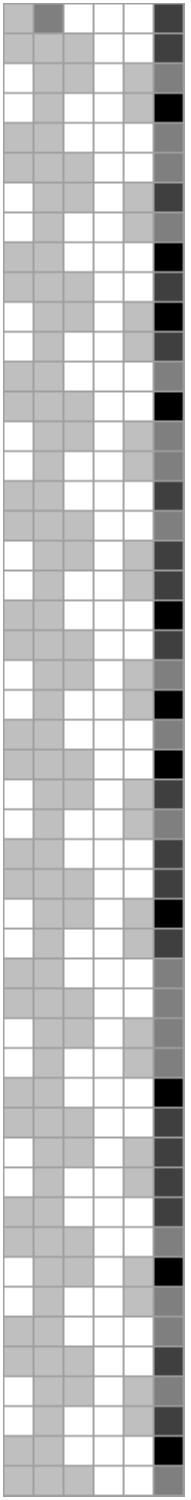}	
	\caption{Portions of $\word_{3/2}$ (left) and $\word_{5/4}$ (middle and right), partitioned into rows of width $6$.
	The word $\word_{3/2}$ is shown from the beginning.
	The word $\word_{5/4} = w(0) w(1) \cdots$ is shown beginning from $w(i)_{i \geq 6756}$ (middle) and $w(i)_{i \geq 6758}$ (right).
	In middle image, we have chopped off the first $6756/6=1126$ rows to show where five columns become periodic.
	The term $w(6759)$ (top row, second column on the right) is the last entry in $w(6 i + 3)_{i \geq 0}$ that is not $1$.}
	\label{fig:5over4 - 6}
\end{figure}

\begin{notation}\label{not:phi}
Let $\Sigma_8$ be the alphabet $\{n_j \colon n\in \Z, 0\le j \le 7\}$.
Let $\varphi$ be the $6$-uniform morphism defined on $\Sigma_8$ by
\begin{align*}
	\varphi(n_0) &= 0_0 1_1 0_2 0_3 1_4 (n + 3)_5 \\
	\varphi(n_1) &= 1_6 1_7 0_0 0_1 0_2 (n + 2)_3 \\
	\varphi(n_2) &= 1_4 1_5 1_6 0_7 0_0 (n + 3)_1 \\
	\varphi(n_3) &= 0_2 1_3 1_4 0_5 1_6 (n + 2)_7 \\
	\varphi(n_4) &= 0_0 1_1 0_2 0_3 1_4 (n + 1)_5 \\
	\varphi(n_5) &= 1_6 1_7 0_0 0_1 0_2 (n + 2)_3 \\
	\varphi(n_6) &= 1_4 1_5 1_6 0_7 0_0 (n + 1)_1 \\
	\varphi(n_7) &= 0_2 1_3 1_4 0_5 1_6 (n + 2)_7.
\end{align*}
We suggest keeping a copy of the definition of $\varphi$ handy, since we refer to it many times in the rest of the paper.
\end{notation}

The subscripts in each image $\varphi(n_j)$ increase by $1$ modulo $8$ from one letter to the next and also from the end of each image to the beginning of the next.
We also define the coding $\tau(n_j) = n$ for all $n_j\in \Sigma_8$.
In the rest of the paper, we think about the definitions of $\varphi$ and $\tau \circ \varphi$ as $8\times 6$ arrays of their letters.
In particular, we will refer to letters in images of $\varphi$ and $\tau \circ \varphi$ by their columns (first through sixth).

The following gives the structure of $\word_{5/4}$.

\begin{theorem}\label{thm:main}
There exist a word $\p$ on $\N = \{0, 1, \dots\}$ of length $6764$ and a word $\z$ on $\Sigma_8$ of length $20226$ such that $\word_{5/4} = \p \, \tau ( \varphi(\z) \varphi^2(\z) \cdots )$.
\end{theorem}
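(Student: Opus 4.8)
The plan is to show that the word $W := \p\,\tau(\varphi(\z)\varphi^2(\z)\cdots)$ coincides with $\word_{5/4}$ by verifying the two defining properties of the lexicographically least $5/4$-power-free word. Recall that $\word_{5/4}$ is exactly the word produced by the greedy algorithm over the infinite alphabet, so it suffices to check that $W$ is greedy: first, that $W$ itself avoids $5/4$-powers, and second, that for every index $i$ and every letter $c$ with $0 \le c < w(i)$, the word obtained from the length-$(i+1)$ prefix of $W$ by replacing $w(i)$ with $c$ ends in a $5/4$-power. Together these force $W = \word_{5/4}$. Throughout, a position carrying the letter $0$ is automatically minimal, so in the second property I need only justify positions with a nonzero letter.

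The backbone of both verifications is the self-referential identity satisfied by $Z := \varphi(\z)\varphi^2(\z)\varphi^3(\z)\cdots$, namely $Z = \varphi(\z)\,\varphi(Z)$, equivalently $Z = \varphi(\z\,Z)$, which lets me pull statements about $\tau(Z)$ back through the $6$-uniform morphism $\varphi$. First I would record the synchronization property implicit in Notation~\ref{not:phi}: since subscripts increase by $1$ modulo $8$ both within each image and across image boundaries, the subscript at a given position of $Z$ equals that position modulo $8$ up to a fixed offset, and the block boundaries (the multiples of $6$) are likewise determined by position. Consequently desubstitution is unambiguous --- any sufficiently long factor of $\tau(Z)$ lifts uniquely to a factor of $\z\,Z$ over $\Sigma_8$ --- which is the tool that drives the induction below.

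For the avoidance half I would argue by contradiction on a shortest $5/4$-power $u = xyx$ occurring in $W$, where $\size{x} = \ell$ and $\size{y} = 3\ell$. Occurrences lying inside $\p$, within a bounded window past it, or with $\ell$ below a fixed threshold are eliminated by a finite computation on an explicit prefix of $W$. For a long occurrence I would use the synchronization property to align the endpoints of $u$ with block boundaries, desubstitute through $\varphi$, and show that the image of a $5/4$-power must itself arise from a $5/4$-power (or from a structure short enough to contradict the base case) in the shorter word $\z\,Z$, contradicting minimality of $\ell$. I expect this desubstitution step to be the main obstacle: unlike a square or an overlap, a $5/4$-power separates its two copies of $x$ by a gap of length $3\ell$, so alignment must be controlled at both ends simultaneously. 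The case analysis on the residues of the two endpoints of $u$ modulo $6$ and on $\ell$ modulo $6$, compounded by the modulo-$8$ subscript bookkeeping and by the interaction between the repeated block $x$ and the non-periodic sixth column carrying the increments $n+1, n+2, n+3$, must be carried out so that no case collapses the period below the finitely verified threshold.

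For the minimality half I would again dispatch the prefix $\p$ by a finite check and then treat the structured tail columnwise, using $Z = \varphi(\z\,Z)$. In the first five columns the nonzero letters are the $1$s, and since these columns are eventually periodic I must show that each such $1$ cannot be lowered to $0$ without repeating a block that occurred $3\ell$ positions to the left, recreating a $5/4$-power ending at that position. In the sixth column, where the genuinely new and increasing values appear, I would read off from the column structure of $\tau\circ\varphi$ the recurrence that $w(i)$ satisfies and, applying it recursively through the identity $Z = \varphi(\z\,Z)$, show that the incremented value is precisely the least letter compatible with $5/4$-power-freeness. Combining the two halves yields $W = \word_{5/4}$, which is the assertion of the theorem.
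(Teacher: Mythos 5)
Your overall architecture (prove $5/4$-power-freeness, then prove that decreasing any nonzero letter creates a $5/4$-power ending there) matches the paper's, but the step you yourself flag as ``the main obstacle'' is resolved in a way that does not work for this morphism, and this is exactly where the paper has to do something genuinely new. Your avoidance argument takes a shortest $5/4$-power $xyx$ in $\tau(Z)$, aligns it with block boundaries, and desubstitutes, claiming that ``the image of a $5/4$-power must itself arise from a $5/4$-power'' in $\z Z$. That implication is false here: $\varphi$ is not a $5/4$-power-free morphism, and dually a $5/4$-power in $\varphi(w)$ need not come from a $5/4$-power in $w$. The reason is that $\varphi$ is not injective on letters --- by Lemma~\ref{lem:equality}, $\varphi(n_i)=\varphi(\bar n_j)$ whenever $i-j\in\{-4,0,4\}$ and a congruence on $n-\bar n$ holds --- so the aligned preimage of $xyx$ is a factor $uvu'$ with $\varphi(u)=\varphi(u')$ but possibly $u\neq u'$. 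Your induction on the shortest $5/4$-power therefore does not close: the desubstituted object is not a shorter counterexample of the same kind. The paper's fix is to strengthen the invariant to \emph{pre-$5/4$-power-freeness} (no factor $uvu'$ with $|u|=\tfrac13|v|=|u'|$ and $\varphi(u)=\varphi(u')$), prove that $\varphi$ preserves this stronger property on subscript-increasing words over $\Sigma_8\setminus\Gamma$ (Proposition~\ref{pro:pre-5-4-power-phi}), establish the base case $\z\varphi(\z)$ by a long computation, and only at the end convert back to $5/4$-power-freeness. Without some such strengthening (or an equivalent device), your induction has a genuine gap.

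Two further points are underestimated rather than wrong. First, a $5/4$-power whose first copy of $x$ starts in $\p$ can have $|x|$ arbitrarily large, so these occurrences are not ``within a bounded window past $\p$'' and cannot all be eliminated by a finite prefix computation; the paper instead shows every length-$952$ factor starting in $\p$ occurs only once in $\p\,\tau(\varphi(\s))$, which requires a class-refinement procedure over symbolic letters $n+d$ (the ``possibly equal'' machinery), plus a separate argument for why a finite set $N=\{0,1,2,3,4\}$ of substitution values suffices. Second, the coding $\tau$ collapses the subscripts, so $5/4$-power-freeness of $\varphi(\s)$ does not immediately give it for $\tau(\varphi(\s))$; one needs the parity analysis of Lemma~\ref{lem:tau-phi-s-5-4-power-free}, and the same subtlety reappears in the lexicographic-leastness induction when lifting the $5/4$-power produced in the preimage back through $\tau\circ\varphi$ (in particular across the seam with $\p$, which is handled via the common suffix $0003$ of $\p$ and $\tau(\z)$). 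Your minimality half is directionally consistent with the paper's induction on positions, but it inherits the same lifting issues.
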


In particular, we can show that $\word_{5/4}$ is a morphic word.
Let $0'$ be a letter not in $\Sigma_8$, and define $\varphi(0') = 0' \z$.
Then $\varphi$ has the fixed point $\varphi^\omega(0') = 0' \z \varphi(z) \varphi^2(\z) \cdots$.
Closure properties~\cite[Theorems~7.6.1 and 7.6.3]{Allouche--Shallit-2003} for morphic words imply that chopping off the prefix $\tau(0'\z)$ of $\tau(\varphi^\omega(0'))$ and prepending $\p$ preserve the property of being a morphic word.
Therefore $\word_{5/4}$ is morphic.

Theorem~\ref{thm:main} also implies that five of the six columns of $\word_{5/4}$ are eventually periodic, and the last column satisfies the following recurrence.

\begin{corollary}\label{cor:5over4}
Let $w(i)$ be the $i$th letter of the word $\word_{5/4}$.
Then, for all $i\ge 0$,
\[
	w(6 i + 123061) =
	w(i + 5920) + \begin{cases}
		3	& \text{if $i \equiv 0, 2 \mod 8$} \\
		1	& \text{if $i \equiv 4, 6 \mod 8$} \\
		2	& \text{if $i \equiv 1 \mod 2$}.
	\end{cases}
\]
\end{corollary}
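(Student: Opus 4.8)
The plan is to derive Corollary~\ref{cor:5over4} directly from Theorem~\ref{thm:main} by tracking what the last column of $\varphi$ does to the letter indices. Writing $\word_{5/4} = \p\,\tau(\varphi(\z)\varphi^2(\z)\cdots)$, I would first observe that every block of six consecutive letters in the tail $\tau(\varphi(\z)\varphi^2(\z)\cdots)$ is the image $\tau(\varphi(c))$ of a single letter $c \in \Sigma_8$ sitting in the word $\z\,\varphi(\z)\,\varphi^2(\z)\cdots$. Inspecting Notation~\ref{not:phi}, the sixth column (last letter of each $\varphi(n_j)$) is the only place where the numerical value $n$ is transmitted: it equals $n + c_j$ for a shift $c_j \in \{1,2,3\}$ depending only on the subscript $j$, while the other five columns carry constants ($0$ or $1$). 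Concretely the shifts are $+3$ for $j=0,2$, $+1$ for $j=4,6$, and $+2$ for $j$ odd. So the value at position $6i + 5$ inside a $\varphi$-image exceeds the value of the preimage letter by exactly the amount dictated by the subscript of that preimage letter.

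The heart of the argument is therefore to show that, once we pass far enough out (past the prefix $\p$ and the initial seed $\z$, which accounts for the explicit offsets $123061$ and $5920$), the subscript $j$ of the preimage letter depends on the index $i$ only through a fixed residue condition modulo $8$. I would make this precise by setting up the bijection between positions in the tail and positions in the ``one-level-up'' word: if $w(6i + C_1)$ is the last letter of the $\varphi$-image of the letter occupying position $i + C_2$ of $\z\,\varphi(\z)\cdots$, then the shift added is determined by the subscript of that letter. The key structural fact to establish is that the subscript of the letter at position $i + C_2$ is congruent mod $8$ to a linear function of $i$; this follows from the remark after Notation~\ref{not:phi} that subscripts increase by $1$ modulo $8$ from one letter to the next \emph{and} across image boundaries, which means the subscript of the letter at any position $p$ in $\z\,\varphi(\z)\varphi^2(\z)\cdots$ is simply $p \bmod 8$ (after fixing the base point so that $\z$ starts with a subscript-$0$ letter). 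Combining this with the shift table gives: subscript $\equiv 0,2 \pmod 8 \Rightarrow +3$; subscript $\equiv 4,6 \pmod 8 \Rightarrow +1$; odd subscript $\Rightarrow +2$, matching the three cases in the corollary.

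The remaining work is pure bookkeeping with the two additive constants. I would verify that the constants $123061$, $5920$ are exactly the values that align the residues: starting from Theorem~\ref{thm:main}, $\p$ has length $6764$ and $\z$ has length $20226$, and the tail begins with $\tau(\varphi(\z))$; one computes where the sixth-column entries of $\varphi(\z)$, $\varphi^2(\z)$, etc.\ land in $\word_{5/4}$, and checks that $w(6i+123061)$ is the sixth-column image of the letter at position $i+5920$ of $\z\,\varphi(\z)\cdots$, with the subscript of that letter congruent to $i \bmod 8$. The cleanest way to present this is to reduce to the identity $w(6i+5+D) = \tau(\varphi(\cdot))$-value $= (\text{preimage value}) + (\text{shift})$ valid for all $i \ge 0$, then substitute the specific $D$.

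The main obstacle I anticipate is not conceptual but the verification that the explicit constants $123061$ and $5920$, together with the residue alignment $i \bmod 8$, are genuinely correct and mutually consistent. The offsets are large and awkward precisely because the recurrence only kicks in once we are past the irregular prefix $\p$ and the seed $\z$, and the ``$+1$ versus $+3$'' split (coming from the difference between the $j=0,2$ rows and the $j=4,6$ rows of $\varphi$) must be matched to the correct residues mod $8$ rather than mod $4$. I would guard against an off-by-one or an off-by-a-shift error by checking the recurrence against a few explicitly computed initial terms of $\word_{5/4}$ near the stated indices --- for instance confirming, as the caption to Figure~\ref{fig:5over4 - 6} notes, that $w(6759)$ is the last non-$1$ entry in the column $w(6i+3)$, which pins down the periodic behavior of the other five columns and hence the alignment of the base point.
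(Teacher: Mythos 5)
Your overall strategy --- read the increment off the sixth column of $\varphi$, with the increment determined by the subscript of the preimage letter, which in turn is determined modulo $8$ by its position --- is the same idea the paper uses. But there are two problems with your bookkeeping, one of which is a genuine missing step rather than the off-by-a-shift error you say you would guard against. First, the letter $w(6i+123061)$ is the last letter of $\tau(\varphi(\s(m)))$ for $m = i + 19382$, not $m = i+5920$: since $\tau(\varphi(\s))$ begins at position $6764$ of $\word_{5/4}$, the image of $\s(m)$ occupies positions $6764+6m,\dots,6764+6m+5$, and $6m+6769 = 6i+123061$ forces $m = i+19382$. With $m=i+5920$ the subscript of $\s(m)$ would be $(i+5922) \bmod 8 = (i+2)\bmod 8$ (recall $\z$ begins with $0_2$, not a subscript-$0$ letter), and the residue classes in the corollary would come out wrong; with $m=i+19382$ the subscript is $\equiv i \bmod 8$, as needed.

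Second, and more importantly, identifying the preimage letter only shows $w(6i+123061) = \tau(\s(i+19382)) + d$. The corollary asserts that the right-hand side is $w(i+5920)+d$, so you still must prove $\tau(\s(i+19382)) = w(i+5920)$ for every $i \ge 0$, and this is not pure bookkeeping. For $i \ge 844$ it follows from the self-similarity $\s = \z\,\varphi(\z)\varphi^2(\z)\cdots$ together with $w(6764+m) = \tau(\varphi(\s)(m))$. But for $0 \le i \le 843$ the position $i + 5920$ lies inside the prefix $\p$, which is not an image under $\varphi$ of anything, while $i+19382$ lies in the last $844$ positions of $\z$; the two letters agree only because the length-$844$ suffixes of $\p$ and $\tau(\z)$ coincide (Part~\ref{common-suffix-p-tau-z} of Lemma~\ref{lem:various-properties}), a computational fact that must be invoked, or replaced by an explicit check of those $844$ cases. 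The paper sidesteps your first issue entirely by working one level up: it compares $\tau(\varphi(\s))$, starting at position $6764$, with $\tau(\varphi^2(\s))$, starting at position $128120$ --- both honest suffixes of $\word_{5/4}$ --- obtaining $w(6i+128125) = w(i+6764) + d$ for all $i \ge 0$, which is the corollary for $i \ge 844$ after reindexing (the classes $0,2$ and $4,6$ swap because $844 \equiv 4 \bmod 8$), and then verifies the remaining $844$ initial cases separately. You should either adopt that framing or add the missing common-suffix step.
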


There are $20510$ transient rows before the self-similarity repeats, hence the value $123061=6\cdot 20510 +1$ in Corollary~\ref{cor:5over4}.
We show that the sequence $w(i)_{i\ge 0}$ is $6$-regular in the sense of Allouche and Shallit~\cite{Allouche--Shallit-1992}.
More generally, we prove the following.

\begin{theorem}\label{thm:regularity}
Let $k\ge 2$ and $\ell\ge 1$.
Let $d(i)_{i \geq 0}$ and $u(i)_{i \geq 0}$ be periodic integer sequences with period lengths $\ell$ and $k \ell$, respectively.
Let $r,s$ be nonnegative integers such that $r - s + k- 1 \ge 0$.
Let $w(i)_{i\ge 0}$ be an integer sequence such that, for all $0\le m \le k-1$ and all $i\ge 0$,
\[
w(ki+r+m)
=
\begin{cases}
	u(ki+m) & \text{if $0 \leq m \leq k-2$} \\
	w(i+s) + d(i) & \text{if $m = k-1$}.
\end{cases}
\]
Then $w(i)_{i\ge 0}$ is $k$-regular.
\end{theorem}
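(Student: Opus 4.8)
The plan is to realize $w$ inside a finitely generated $\Z$-module of sequences that is closed under the decimation operators, and then invoke Noetherianity. For $0\le j\le k-1$ write $\Delta_j$ for the operator sending a sequence $(a(i))_{i\ge0}$ to $(a(ki+j))_{i\ge0}$, so that the $k$-kernel of $w$ is the orbit of $w$ under the monoid generated by $\Delta_0,\dots,\Delta_{k-1}$. It suffices to produce a $\Z$-module $V$ of integer sequences that is finitely generated, contains $w$, and satisfies $\Delta_j V\subseteq V$ for every $j$: since $\Z$ is Noetherian, the submodule of $V$ generated by the kernel is then finitely generated, which is exactly $k$-regularity in the sense of Allouche and Shallit~\cite{Allouche--Shallit-1992}.

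First I would compute $\Delta_j w$ from the recurrence. Fix $j$ and, for $i$ large enough that $ki+j\ge r$, rewrite $ki+j=k(i-c)+r+m$ where $m\equiv j-r\pmod k$ with $0\le m\le k-1$ and $c=(r+m-j)/k$. If $m\le k-2$ the recurrence gives $\Delta_j w(i)=u(ki+j-r)$, which as a function of $i$ is periodic of period $\ell$ because $u$ has period $k\ell$. For the single residue $j^\ast\equiv r-1\pmod k$ one gets $m=k-1$, $c=\ceil{r/k}$, and $\Delta_{j^\ast}w(i)=w(i+\delta)+d(i-\ceil{r/k})$ with $\delta:=s-\ceil{r/k}$, where $d(i-\ceil{r/k})$ is again periodic of period $\ell$. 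Thus, up to the finitely many initial terms with $ki+j<r$, each $\Delta_j w$ is either a period-$\ell$ sequence or a shift of $w$ plus a period-$\ell$ sequence. The hypothesis $r-s+k-1\ge0$ is what keeps the reference on the right-hand side at strictly earlier indices, so that these substitutions may be iterated.

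To absorb both the shifts and the finite corrections I would take two families of generators. First, the shifted copies $\bar w_h$ defined by $\bar w_h(i)=w(i+h)$ when $i+h\ge0$ and $\bar w_h(i)=0$ otherwise, for $h$ ranging over the forward orbit $T$ of $0$ under the update map $g(h)=\ceil{(h-j^\ast)/k}+\delta$. Second, a basis of the module $Q_{N,\ell}$ of all integer sequences that are periodic of period dividing $\ell$ on $\{i:i\ge N\}$. The module $Q_{N,\ell}$ is free of rank $N+\ell$ (a sequence in it is determined by its first $N+\ell$ values), and it is closed under every $\Delta_j$ because decimation does not increase the pre-period. The set $T$ is finite: $g$ is nondecreasing with $g(h)-h\to+\infty$ as $h\to-\infty$ and $g(h)-h\to-\infty$ as $h\to+\infty$, so the orbit of $0$ is monotone and bounded, hence eventually constant. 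Choosing $N$ larger than $\ceil{r/k}$ and than the fixed point of the pre-period bound under $N\mapsto N/k+O(1)$ makes every finite correction arising above a finite-support sequence supported on $\{i<N\}$, hence an element of $Q_{N,\ell}$; a direct check then gives $\Delta_j\bar w_h=\bar w_{g(h)}+(\text{element of }Q_{N,\ell})$ when $\Delta_j$ hits the special residue and $\Delta_j\bar w_h\in Q_{N,\ell}$ otherwise. Therefore $V=\langle\{\bar w_h:h\in T\}\rangle+Q_{N,\ell}$ is finitely generated, contains $w=\bar w_0$, and is $\Delta_j$-closed, completing the argument.

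The main obstacle I anticipate is the bookkeeping around the finitely many exceptional initial terms and the fact that $\delta$ may be negative (as in Corollary~\ref{cor:5over4}, where the elements of $T$ turn out to be all negative). One must check that zero-padding the backward shifts introduces only bounded-support errors, and that these errors—together with the way a backward shift can enlarge a correction's support by a bounded additive amount at each decimation—do not force the pre-period bound $N$ to grow without limit. Exhibiting a single $N$ that is stable under all of $\Delta_0,\dots,\Delta_{k-1}$ simultaneously, so that $Q_{N,\ell}$ genuinely absorbs every correction, is the delicate point; everything else reduces to the finiteness of $T$ and the elementary structure of $Q_{N,\ell}$.
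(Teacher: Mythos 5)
Your proposal is correct, and it organizes the argument genuinely differently from the paper. The paper works directly with the kernel sequences $w(k^e i+j)$: it introduces the functions $f_{e,t,j}$ to describe $t$-fold iteration of the case $m=k-1$, identifies for each $e$ the unique residue $j_e$ modulo $k^e$ on which the recurrence can be iterated $e$ times (Lemma~\ref{lem:j-je-mod-powers-k}), shows that every other kernel sequence lands in the periodic background $u$ after finitely many iterations, and then splits into cases according to whether $\frac{r-s}{k-1}$ is an integer in order to relate the sequences $w(k^e i+j_e)$ to one another. You instead apply one decimation at a time to shifted copies of $w$: each $\Delta_j\bar w_h$ is either eventually periodic with period dividing $\ell$ or equals $\bar w_{g(h)}$ plus such a sequence, and the finiteness of the forward orbit $T$ of $0$ under the contraction-like map $g$ plays the role of the paper's observation that $q_e=\lfloor Q_e\rfloor$ stabilizes for $e\ge E$. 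Your route buys a cleaner closure argument (a single finitely generated module stable under all $\Delta_j$, no case split on $\frac{r-s}{k-1}\in\Z$, no explicit $e$-fold iteration formulas); the paper's route buys explicit generating sets, which it needs afterwards for the concrete rank bounds in Corollary~\ref{cor: bound on rank for w54} and Theorem~\ref{thm: best bound on rank for w54}. The one point you flag as delicate is in fact benign: decimation never increases the preperiod of an eventually periodic sequence (if $a$ has period dividing $\ell$ on $i\ge N$, then $\Delta_j a$ has period dividing $\ell$ on $i\ge\ceil{(N-j)/k}\le N$), so there is no feedback loop forcing $N$ to grow, and one may simply take $N$ to be the maximum of the finitely many thresholds $\ceil{(r-j-h)/k}$ over $0\le j\le k-1$ and $h\in T$, beyond which the recurrence applies and the exceptional terms are supported below $N$.
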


To prove Theorem~\ref{thm:main}, we must show that
\begin{enumerate}
\item $\p \, \tau ( \varphi(\z) \varphi^2(\z) \cdots )$ is $5/4$-power-free, and
\item $\p \, \tau ( \varphi(\z) \varphi^2(\z) \cdots )$ is lexicographically least (by showing that decreasing any letter introduces a $5/4$-power ending in that position).
\end{enumerate}

The word $\word_{5/4}$ is more complicated than previously studied words $\word_{a/b}$ in three major ways.
First, unlike all words $\word_{a/b}$ whose structures were previously known, the natural description of $\word_{5/4}$ is not as a morphic word, that is, as an image under a coding of a fixed point of a morphism.
This can be seen in Corollary~\ref{cor:5over4}.
Namely, when $w(i)_{i \geq 0}$ reappears as a modified subsequence of $\word_{5/4}$, it does not appear in its entirety; instead, only $w(i)_{i \geq 5920}$ appears.
In other words, there is a second kind of transient, represented by $5920 \neq 0$, which had not been observed before.
Second, the value of $d$ in the images $\varphi(n_j) = u \, (n + d)_i$ varies with $j$.
The sequence $3,2,3,2,1,2,1,2,\dots$ of $d$ values is periodic with period length $8$, hence the $8$ types of letters.
Third, the morphism $\varphi$ does not preserve the property of $5/4$-power-freeness, as we discuss in Section~\ref{sec:pre-5/4-power-freeness}.
These features make the proofs significantly more intricate.
We use \emph{Mathematica} to carry out several computations required in the proofs.
In particular, we explicitly use the length-$331040$ prefix of $\word_{5/4}$.
A notebook containing the computations is available from the websites\footnote{
\url{https://ericrowland.github.io/papers.html} and \url{https://sites.google.com/view/manonstipulanti/research}
} of the authors.

This paper is organized as follows.
Section~\ref{sec:first-prop} gives some useful preliminary properties of the words $\p$ and $\z$ from Theorem~\ref{thm:main}.
In Section~\ref{sec:length}, we show that $5/4$-powers in images under $\varphi$ have specific lengths.
As the morphism $\varphi$ does not preserve $5/4$-power-freeness, we introduce the concept of pre-$5/4$-power-freeness in Section~\ref{sec:pre-5/4-power-freeness} and we show that the word $\z\varphi(\z)\cdots$ is pre-$5/4$-power-free.
We prove Theorem~\ref{thm:main} in two steps.
First, in Section~\ref{sec:5/4-power-freeness}, we show that $\p\tau(\varphi(\z)\varphi^2(\z)\cdots)$ is $5/4$-power-free using the pre-$5/4$-power-freeness of $\z\varphi(\z)\cdots$.
Second, in Section~\ref{sec:lexicographic-least}, we show that $\p\tau(\varphi(\z)\varphi^2(\z)\cdots)$ is lexicographically least.
In Section~\ref{sec:6-regularity}, we study the regularity of words whose morphic structure is similar to that of $\word_{5/4}$, and we prove Theorem~\ref{thm:regularity}.
In particular, we prove that the sequence of letters in $\word_{5/4}$ is $6$-regular, and we establish that its rank is $188$.
We finish up with some open questions in Section~\ref{sec:open-questions}, including conjectural recurrences for $\word_{7/6}$ and several other words.

In the remainder of this section, we outline how the structure of $\word_{5/4}$ was discovered.

\subsection{Experimental discovery}

\begin{figure}
	\includegraphics[width=.5\textwidth]{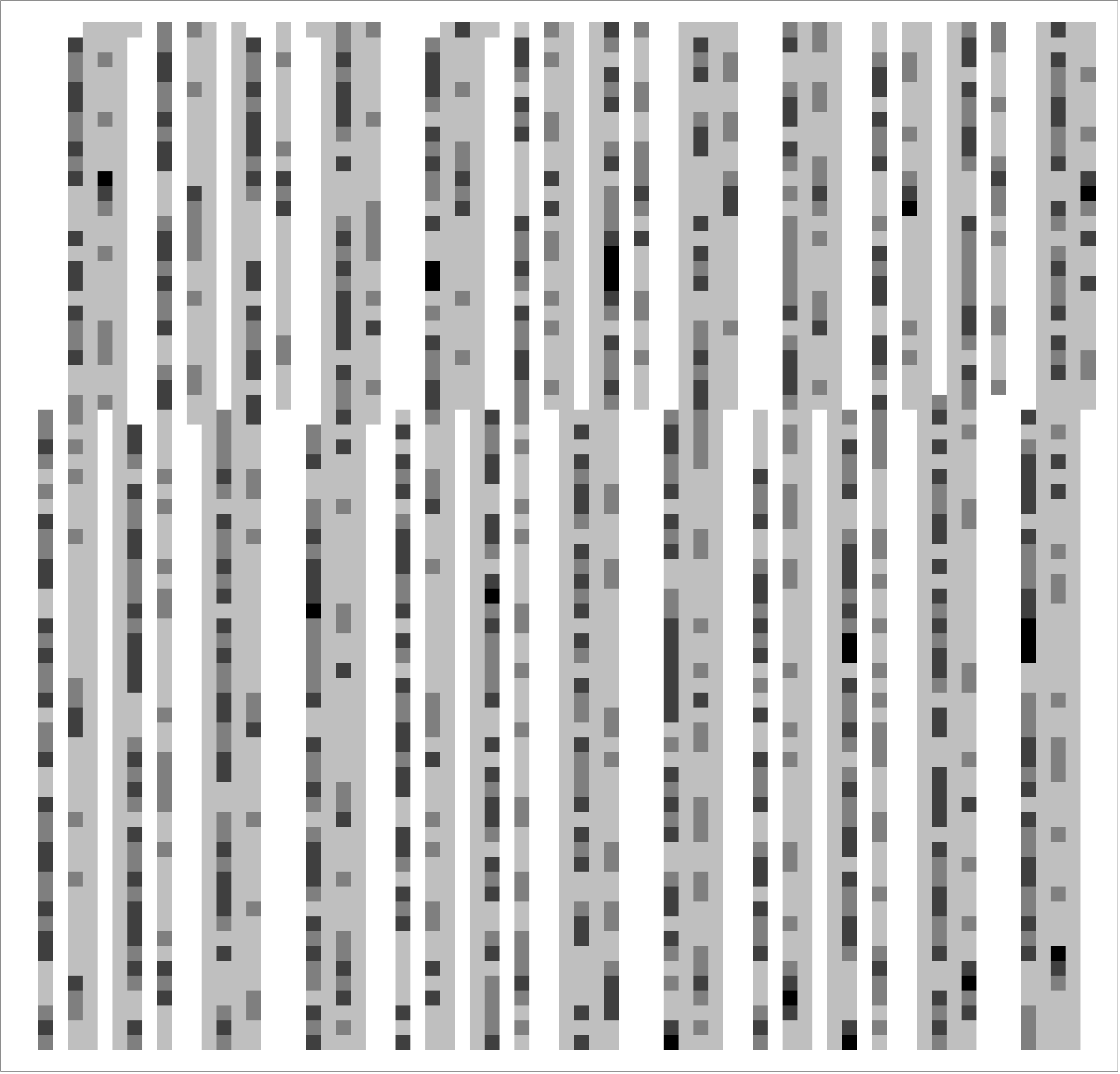}
	\caption{A prefix of $\word_{5/4}$, partitioned into rows of width $72$.}
	\label{fig:5over4}
\end{figure}

If the previous words studied in \cite{Rowland--Shallit} and \cite{Pudwell--Rowland} are any indication, the structure of $\word_{a/b}$ can be identified when the letters of $\word_{a/b}$ are partitioned into rows of width $k$ such that exactly one column is not eventually periodic.
We then look for the letters of $\word_{a/b}$ appearing self-similarly in this column.
For $\word_{5/4}$, the largest such $k$ appears to be $k = 72$.
Figure~\ref{fig:5over4} shows the partition of a prefix of $\word_{5/4}$ into rows of width $72$.

\begin{figure}
	\includegraphics[width=.45\textwidth]{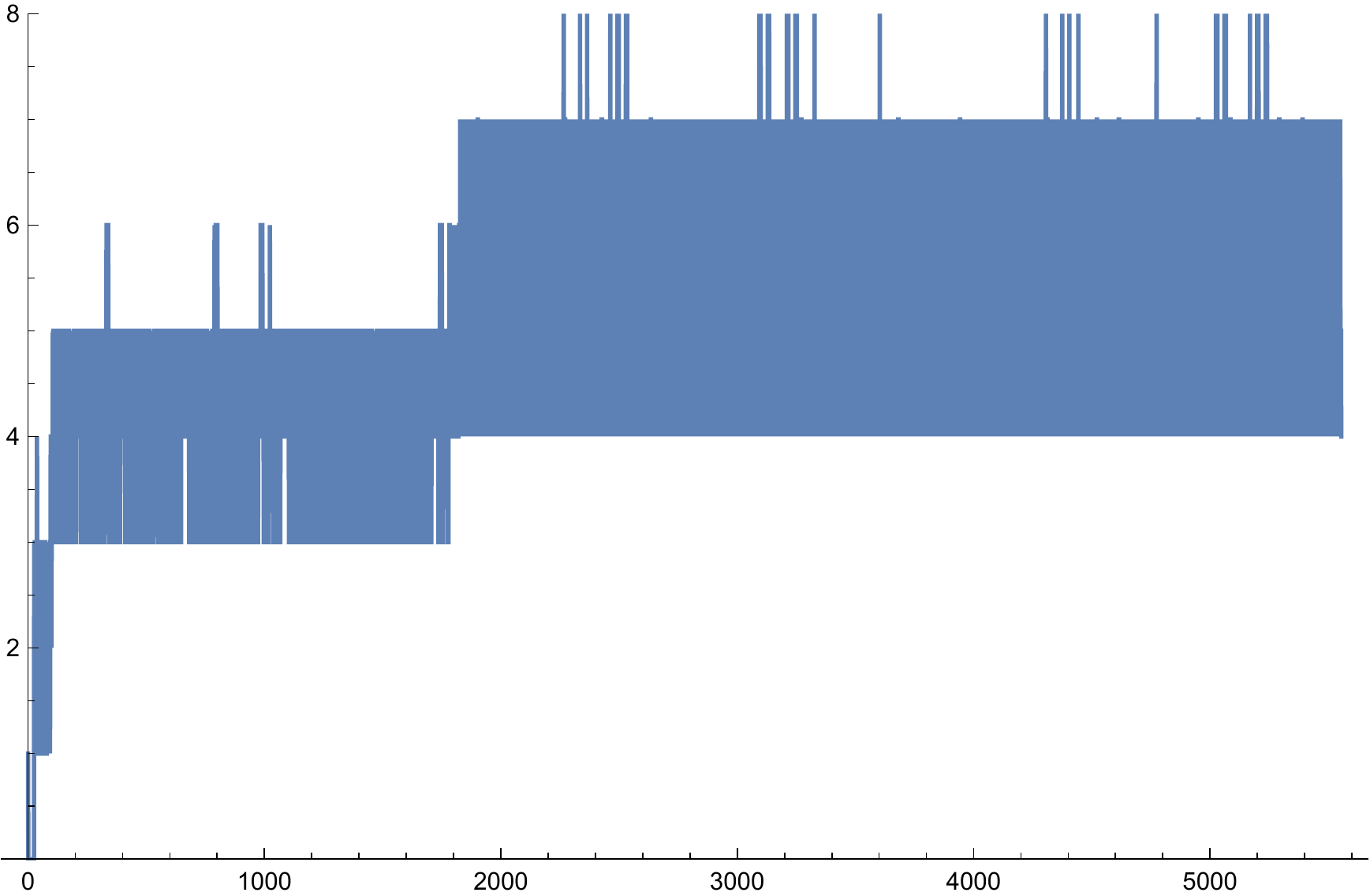}
	\qquad
	\includegraphics[width=.45\textwidth]{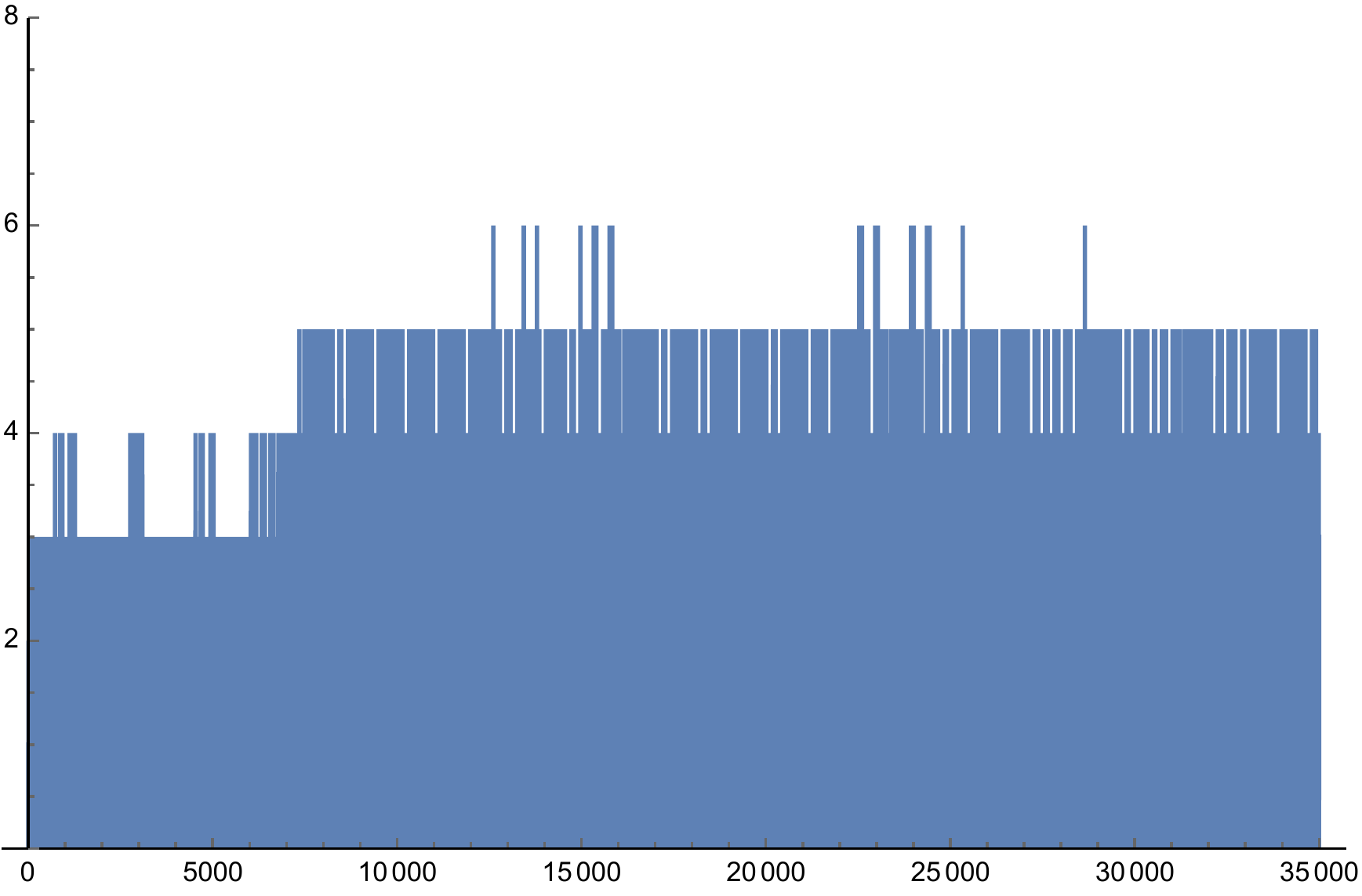}
	\caption{A plot of prefixes of the sequences $w(72 i + 31)_{i \geq 0}$ (left) and $\word_{5/4}=w(i)_{i \geq 0}$ (right).}
	\label{fig:5over4 plots}
\end{figure}

A longer prefix reveals that the sought nonperiodic column is $w(72 i + 31)_{i \geq 0}$.
Figure~\ref{fig:5over4 plots} plots the first several thousand terms of the sequences $w(72 i + 31)_{i \geq 0}$ and $w(i)_{i \geq 0}$.
The peaks in these plots suggest that the occurrences of the letter $8$ in $w(72 i + 31)_{i \geq 0}$ are related to the occurrences of the letter $6$ in $w(i)_{i \geq 0}$.
Specifically, the intervals between instances of $6$ in $w(i)_{i \geq 0}$ seem to be twelve times as long as the corresponding intervals between instances of $8$ in $w(72 i + 31)_{i \geq 0}$.
Lining up the peaks suggests
\begin{equation}\label{eqn:5over4 first conjecture}
	w(72 i + 163183) = w(12 i + 12607) + 2
\end{equation}
for all $i \geq 0$.
If possible, we would like a conjecture of the form
\begin{equation}\label{eqn:double transient general form}
	w(k i + r') = w(i + s) + d
\end{equation}
where the coefficient of $i$ on the right side is $1$, since such a recurrence would relate a subsequence of the letters in $\word_{5/4}$ to a suffix of $\word_{5/4}$.
Here $r'$ represents the ``usual'' transient seen in other words $\word_{a/b}$ (related to $r$ in Theorem~\ref{thm:regularity} by $r' = r + k - 1$), and $s$ represents a new kind of transient.
If $s \neq 0$, the recurrence does not look back to the beginning of $\word_{5/4}$.
Toward Equation~\eqref{eqn:double transient general form}, we examine the termwise difference of $w(6 i + 163183)_{i \geq 0}$ and $w(i + 12607)_{i \geq 0}$, which is
\[
	2, 3, 2, 3, 2, 1, 2, 1, 2, 3, 2, 3, 2, 1, 2, 1, 2, 3, 2, 3, 2, 1, 2, 1, \dots.
\]
This sequence is not constant, so the $d$ in Equation~\eqref{eqn:double transient general form} is in fact a function of $i$; however, it appears to be periodic with period length $8$.
Moreover, although we obtained this sequence by looking at positions of $8$ and $6$, in fact this periodic difference begins $6687$ terms earlier, before the first occurrences of $8$ and $6$.
This gives Corollary~\ref{cor:5over4}, which in turn suggests the definition of $\varphi$ in Notation~\ref{not:phi}.
The 8 residue classes in Corollary~\ref{cor:5over4} correspond to the 8 types of letters.
Note there is some flexibility in the definition of $\varphi$.
To parallel morphisms in~\cite{Guay--Shallit, Pudwell--Rowland,Rowland--Shallit}, we have chosen $\varphi$ such that the last letter in $\varphi(n_j)$ depends on $n$.

\section{Basic properties of the words $\p$ and $\z$}\label{sec:first-prop}

The following definition is motivated by the morphism $\varphi$ in Notation~\ref{not:phi}, where the subscripts increase by $1$ modulo $8$.

\begin{definition}
A (finite or infinite) word $w$ on $\Sigma_8$ is \emph{subscript-increasing} if the subscripts of the letters of $w$ increase by $1$ modulo $8$ from one letter to the next.
\end{definition}

Note that if $w$ is a subscript-increasing word on $\Sigma_8$, then so is $\varphi(w)$.
For every subscript-increasing word $w$ on $\Sigma_8$, it follows from Notation~\ref{not:phi} that the subsequence of letters with even subscripts in $\varphi(w)$ is a factor of $(0_0 0_2 1_4 1_6)^\omega$.

Iterating $\varphi$ on any nonempty word on $\Sigma_8$ will eventually give a word containing letters $n_j$ with arbitrarily larger $n$.
Indeed, after one iteration, we see a letter with subscript $3$ or $7$, so after two iterations we see a letter with subscript $7$.
Since $\varphi(n_7)$ contains $(n + 2)_7$, the alphabet grows without bound.

Before position $6764$, we cannot expect the prefix of $\word_{5/4}$ to be the image of another word under the morphism $\varphi$ because the five columns have not become periodic yet (recall Figure~\ref{fig:5over4 - 6} where $w(6759)$ is the last term of $w(6i+3)$ before a periodic pattern appears).

Starting at position $6764$, the suffix $w(6764) w(6765) \cdots$ of $\word_{5/4}$ is $111003011012 \cdots$.
By Notation~\ref{not:phi}, there is a unique way to assign subscripts to these letters to obtain an image of a word under $\varphi$, namely $1_4 1_5 1_6 0_7 0_0 3_1 0_2 1_3 1_4 0_5 1_6 2_7 \cdots$.
There are two subscript-increasing preimages of this word under $\varphi$, namely
\begin{align*}
	w(6764)_4 w(6764)_5 \cdots
	&= \varphi ( 0_2 0_3 3_4 0_5 1_6 1_7 (-1_0) 2_1 0_2 2_3 2_4 0_5 \cdots ) \\
	&=\varphi ( 2_6 0_7 1_0 0_1 (-1_2) 1_3 1_4 2_5 2_6 2_7 0_0 0_1 \cdots ).
\end{align*}
The preimage $2_6 0_7 1_0 \cdots$ appears to contain infinitely many letters of the form $-1_j$, whereas the preimage $0_2 0_3 3_4 \cdots$ does not.
Since we would like to further de-substitute a suffix of one of these preimages, we choose the preimage $0_2 0_3 3_4 \cdots$ in the following definition.
(In fact we will determine the structure of $0_2 0_3 3_4 \cdots$, and this will imply that the preimage $2_6 0_7 1_0 \cdots$ does contain infinitely many letters of the form $-1_j$.)

\begin{definition}\label{p and z definition}
Let $\p$ denote the length-$6764$ prefix of $\word_{5/4}$.
We define the word
\[
\z = 0_2 0_3 3_4 0_5 1_6 1_7 (-1_0) 2_1 0_2 2_3 2_4 0_5 \cdots 0_0 1_1 0_2 0_3 1_4 2_5 1_6 2_7 0_0 0_1 0_2 3_3
\]
to be the length-$20226$ subscript-increasing word on $\Sigma_8$ starting with $0_2$ and satisfying
\[
\tau(\varphi(\z))=w(6764)w(6765) \cdots w(6764+6\size{\z}-1).
\]
We also define $\s=\z\varphi(\z)\varphi^2(\z)\cdots$, which is a subscript-increasing infinite word on the alphabet $\Sigma_8$.
\end{definition}

The following lemma states several properties of $\p$, $\z$, and $\s$.

\begin{lemma}\label{lem:various-properties}
Let $\Gamma \subset \Sigma_8$ be the finite alphabet
\[
\{ -3_0,-3_2,-2_0,-2_1,-2_2,-2_3,-2_5,-2_7,-1_1,-1_3,-1_4,-1_5,-1_6
 ,-1_7,0_4,0_6\}.
\]
We have the following properties.
\begin{enumerate}

\item\label{common-suffix-p-tau-z}
The length-$844$ suffixes of $\p$ and $\tau(\z)$ are equal.

\item\label{stuff about z}
The word $\z$ is a subscript-increasing finite word whose alphabet is the $32$-letter set
\begin{align*}
\Alphabet(\z) =
\{&
{-1}_0, {-1}_2, 0_0, 0_1, 0_2, 0_3, 0_5, 0_7, 1_0, 1_1,
1_2, 1_3, 1_4, 1_5, 1_6, 1_7, \\
&2_1, 2_3, 2_4, 2_5, 2_6, 2_7, 3_1, 3_3, 3_4, 3_5, 3_6, 3_7, 4_1, 4_3, 4_5, 4_7
\}.
\end{align*}
In particular, $\z$ is a word on the alphabet $\Sigma_8\setminus \Gamma$.
The last letter of the form $-1_j$ in $\z$ appears in position $80$.

\item\label{stuff about s}
The word $\s$ is a subscript-increasing infinite word on $\Sigma_8\setminus \Gamma$.
Moreover, the subsequence of letters in $\s$ with even subscripts starting at position $86$ is $(0_0 0_2 1_4 1_6)^\omega$.

\item\label{even-subscript}
For all words $w$ on $\Sigma_8$, the set of letters with even subscripts in $\varphi(w)$ is a subset of $\{0_0, 0_2, 1_4, 1_6\}$.
Moreover, if $w$ is subscript-increasing, then the subsequence of letters with even subscripts in $\varphi(w)$ is a factor of $(0_0 0_2 1_4 1_6)^\omega$.

\item\label{last-letter-phi}
For each $n_j\in \Sigma_8\setminus\Gamma $, the last letter of $\varphi(n_j)$ is not of the form $0_i$ or $1_i$.

\end{enumerate}
\end{lemma}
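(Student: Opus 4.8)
The plan is to prove Lemma~\ref{lem:various-properties} one item at a time, treating items~(\ref{even-subscript}) and~(\ref{last-letter-phi}) as purely structural facts about the morphism $\varphi$, items~(\ref{stuff about z}) and~(\ref{common-suffix-p-tau-z}) as finite verifications on the explicitly given words $\p$ and $\z$, and item~(\ref{stuff about s}) as the one genuinely inductive statement, to be derived from the earlier parts by iterating $\varphi$.

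I would begin with item~(\ref{even-subscript}), since it underpins everything else. By inspecting the definition of $\varphi$ in Notation~\ref{not:phi}, I read off the letters occupying the even-indexed positions (first, third, and fifth columns) of each image $\varphi(n_j)$. A direct scan shows that in every row the even-subscript letters are drawn from $\{0_0, 0_2, 1_4, 1_6\}$, which proves the first sentence for arbitrary $w$. For the second sentence, I would use that the subscripts increase by $1$ modulo $8$ both within each image and across the seam between consecutive images (as noted after Notation~\ref{not:phi}); hence in $\varphi(w)$ the even-subscript letters occur in the cyclic subscript order $0, 2, 4, 6, 0, 2, \dots$, and pairing each subscript with its forced letter gives exactly the cyclic pattern $0_0\, 0_2\, 1_4\, 1_6$. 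So the even-subscript subsequence is a factor of $(0_0 0_2 1_4 1_6)^\omega$. Item~(\ref{last-letter-phi}) is a similarly finite check: the last letter of $\varphi(n_j)$ is $(n+d)_i$ for the appropriate shift $d \in \{1,2,3\}$ and subscript $i \in \{1,3,5,7\}$; I would verify that for every $n_j \in \Sigma_8 \setminus \Gamma$ the resulting value $n+d$ is neither $0$ nor $1$. This reduces to checking, for each odd target subscript, the finitely many letters $n_j \notin \Gamma$ that map into it and confirming $n + d \notin \{0,1\}$; the set $\Gamma$ is precisely engineered to be the collection of letters that would violate this, so the verification is the definitional complement.

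For items~(\ref{common-suffix-p-tau-z}) and~(\ref{stuff about z}), the word $\z$ and the prefix $\p$ are finite and explicitly determined (via the length-$331040$ prefix of $\word_{5/4}$ recorded in the accompanying \emph{Mathematica} notebook), so I would verify these by direct computation. For item~(\ref{stuff about z}): $\z$ is subscript-increasing by its defining property in Definition~\ref{p and z definition}; I would compute $\Alphabet(\z)$ by tabulating the $20226$ letters and confirm it equals the stated $32$-letter set, then check that this set is contained in $\Sigma_8 \setminus \Gamma$ and locate the final occurrence of a letter $-1_j$ at position $80$. For item~(\ref{common-suffix-p-tau-z}), comparing the length-$844$ suffixes of $\p$ and $\tau(\z)$ is a finite string comparison. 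These are the steps where I would simply defer to the computation rather than argue.

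The crux is item~(\ref{stuff about s}), and this is where I expect the main obstacle. I would prove $\s = \z\,\varphi(\z)\,\varphi^2(\z)\cdots$ is a word on $\Sigma_8 \setminus \Gamma$ by induction on the blocks $\varphi^m(\z)$. The base block $\z$ lies in $\Sigma_8 \setminus \Gamma$ by item~(\ref{stuff about z}). For the inductive step, the key is that $\varphi$ maps $\Sigma_8 \setminus \Gamma$ into $(\Sigma_8 \setminus \Gamma)^*$: the first five columns of any image are explicit letters $0_i$ or $1_i$, all outside $\Gamma$, and the last letter is handled by item~(\ref{last-letter-phi}), which guarantees it is not of the form $0_i$ or $1_i$ and—after checking the shifted values $n+d$ against the explicit list defining $\Gamma$—lands outside $\Gamma$ as well. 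The delicate point is that one must confirm $\Gamma$ is forward-invariant-avoidable, i.e.\ that no letter of $\Sigma_8 \setminus \Gamma$ can produce a letter of $\Gamma$ under $\varphi$; this requires checking all eight shift rules against the sixteen forbidden letters, which is precisely why $\Gamma$ was chosen as it is. For the final claim, that the even-subscript subsequence of $\s$ from position $86$ onward equals $(0_0 0_2 1_4 1_6)^\omega$, I would note that position $86$ marks the start of $\varphi(\z)$ (since $\size{\z}=20226$—here I would pin down that the relevant seam aligns, using that the tail of $\z$ past its last $-1_j$ is subscript-increasing and that $\varphi$ applied to any subscript-increasing word yields, by item~(\ref{even-subscript}), an even-subscript subsequence that is a factor of $(0_0 0_2 1_4 1_6)^\omega$), and that consecutive images $\varphi^m(\z)$ splice together into a single bi-infinite factor of this periodic word because the subscripts cross each seam in increasing order. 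Verifying that the phase at position $86$ is exactly $0_0$ (so the factor is the full period, not a rotation) is a short finite check against the computed prefix.
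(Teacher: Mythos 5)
Your treatment of items~(\ref{common-suffix-p-tau-z}), (\ref{stuff about z}), (\ref{even-subscript}), and (\ref{last-letter-phi}) matches the paper's proof: finite computation for the first two, inspection of $\varphi$ for the third, and a check of the last letters $(n+d)_i$ against $\Gamma$ for the fourth. The problem is in your inductive step for item~(\ref{stuff about s}). The ``key'' you rely on --- that $\varphi$ maps $\Sigma_8 \setminus \Gamma$ into $(\Sigma_8 \setminus \Gamma)^*$ --- is false. For example, $-4_0 \notin \Gamma$, but $\varphi(-4_0) = 0_0 1_1 0_2 0_3 1_4 (-1)_5$ contains $-1_5 \in \Gamma$; likewise $-3_1 \notin \Gamma$ but $\varphi(-3_1)$ ends in $-1_3 \in \Gamma$. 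The set $\Gamma$ was not engineered to be forward-invariant-avoidable (it is the set of letters whose images create short $5/4$-powers, per Lemma~\ref{lem:5-4-powers-less-than-6}), so the check you defer to ``all eight shift rules against the sixteen forbidden letters'' would actually fail. Item~(\ref{last-letter-phi}) only tells you the last letter is not $0_i$ or $1_i$; it does not keep it out of $\Gamma$. The repair is what the paper does: induct on the \emph{actual} alphabet rather than on all of $\Sigma_8 \setminus \Gamma$. Since $\Alphabet(\z)$ contains no letter $n_j$ with $n \le -2$ and its only letters with $n = -1$ have subscripts $0$ and $2$ (which $\varphi$ sends to last letters $2_5$ and $2_1$), one checks that iterating $\varphi$ on $\Alphabet(\z)$ produces only the new letters $5_3$, $6_3$, and $n_7$ for $n \ge 5$, all of which lie outside $\Gamma$; this gives $\Alphabet(\s) = \Alphabet(\z) \cup \{5_3, 6_3\} \cup \{n_7 : n \ge 5\}$.

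A secondary slip: position $86$ does not mark the start of $\varphi(\z)$ in $\s$ --- that block begins at position $\size{\z} = 20226$. Position $86$ lies inside $\z$ (just past the last $-1_j$ at position $80$), and the correct argument is to check directly that the even-subscript letters of $\z$ from position $86$ onward form a prefix of $(0_0 0_2 1_4 1_6)^\omega$, then apply item~(\ref{even-subscript}) to all of $\varphi(\s) = \varphi(\z)\varphi^2(\z)\cdots$ at once, and finally verify the single seam using that $\z$ ends in $\cdots 0_2 3_3$ while $\varphi(\z)$ begins with $1_4$. Your version gestures at the right pieces but misplaces the seam.
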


\begin{proof}
Parts~\ref{common-suffix-p-tau-z} and \ref{stuff about z} follow from computing $\p$ and $\z$.
Part~\ref{even-subscript} follows from inspection of $\varphi$.
To see Part~\ref{last-letter-phi}, for each $j$ we set the last letter $(n+d)_i$ of $\varphi(n_j)$ equal to $0_i$ and $1_i$, solve each for $n$, and observe that $n_j\in\Gamma$.

For Part~\ref{stuff about s}, recall that $\z$ is a prefix of $\s$.
First we prove that $\s$ is a subscript-increasing infinite word on $\Sigma_8\setminus \Gamma$.
The letters $5_3$ and $6_3$ arise from letters $3_1$, $3_5$, $4_1$, and $4_5$ in $\z$.
As previously mentioned, letters with subscript $7$ appear when iterating $\varphi$ on any nonempty word.
One checks by induction that these are the only letters in $\s$.
Therefore the alphabet of $\s$ is $\Alphabet(\z) \cup \{5_3, 6_3\} \cup \{ n_7 \colon n\ge 5 \}$.
To show that $\s$ is subscript-increasing, note that $\z$ ends with $3_3$ and $\varphi(\z)$ starts with $\varphi(0_2)=1_4 \cdots$.
The other boundaries follow inductively by applying $\varphi$.

Next we show that the subsequence of letters in $\s$ with even subscripts starting at position $86$ is $(0_0 0_2 1_4 1_6)^\omega$.
We check that the subsequence of letters in $\z$ with even subscripts starting at position $86$ is a finite prefix of $(0_0 0_2 1_4 1_6)^\omega$.
By Part~\ref{even-subscript}, the sequence of letters in $\varphi(\z) \varphi^2(\z) \cdots = 1_4 1_5 \cdots$ with even subscripts is a factor of $(0_0 0_2 1_4 1_6)^\omega$.
Since $\z = \cdots 0_2 3_3$, the claim follows.
\end{proof}

\section{Lengths of $5/4$-powers}\label{sec:length}

Pudwell and Rowland introduced the notion of locating lengths as a tool to prove that morphisms preserve the property of $a/b$-power-freeness~\cite{Pudwell--Rowland}.
We use this notion in Section~\ref{sec:pre-5/4-power-freeness} to show that $\varphi$ has a weaker property.

\begin{definition}
Let $k \ge 2$ and $\ell \ge 1$.
Let $\mu$ be a $k$-uniform morphism on an alphabet $\Sigma$.
We say that $\mu$ \emph{locates words of length $\ell$} if for each word $u$ of length $\ell$ on $\Sigma$ there exists an integer $m$ such that, for all $v\in \Sigma^*$, every occurrence of the factor $u$ in $\mu(v)$ begins at a position congruent to $m$ modulo $k$.
\end{definition}

If $\mu$ locates words of length $\ell$, then $\mu$ also locates words of length $\ell+1$, since if $\size{u} = \ell +1$ then the positions of the length-$\ell$ prefix of $u$ in an image under $\mu$ is determined modulo $k$.

\begin{lemma}\label{lem:Phi-Locates-6-Length}
The $6$-uniform morphism $\varphi : \Sigma_8^* \to \Sigma_8^*$ locates words of length $6$.
\end{lemma}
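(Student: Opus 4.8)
The plan is to exploit the $6$-uniformity of $\varphi$ together with the rigid subscript pattern recorded in Notation~\ref{not:phi}. Since every image $\varphi(c)$ has length exactly $6$, any length-$6$ factor $u = u_0 u_1 \cdots u_5$ of an image $\varphi(v)$ meets at most two consecutive blocks $\varphi(v_t)\varphi(v_{t+1})$: if $u$ begins at a position $\equiv p \pmod 6$, then $u$ consists of columns $p, p+1, \dots, 5$ of $\varphi(v_t)$ followed by columns $0, 1, \dots, p-1$ of $\varphi(v_{t+1})$, with $u = \varphi(v_t)$ when $p = 0$. To show that $\varphi$ locates $u$, I must show that this offset $p$ is forced by $u$ alone, uniformly over all $v$.

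The key simplification is that such a window carries only a single value-dependent letter. Inspecting Notation~\ref{not:phi}, the integer $n$ occurs in $\varphi(n_j)$ only in the sixth column, as $(n+d)_i$; the first five columns are fixed words independent of $n$. The sixth column of $\varphi(v_t)$ sits at position $5-p$ of $u$, whereas the sixth column of $\varphi(v_{t+1})$ would sit at position $11-p \ge 6$ and so lies outside $u$. Hence exactly one letter of $u$ has a free value, and every other letter is determined by $p$ and by the subscripts $j, j'$ of $v_t, v_{t+1}$. Although $\Sigma_8$ is infinite, each triple $(p, j, j') \in \{0,\dots,5\} \times \{0,\dots,7\}^2$ (with $j'$ irrelevant when $p = 0$) therefore yields a single \emph{template}: a length-$6$ word with one free integer entry. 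Because any pair $(j,j')$ is realizable for a suitable $v$, a word is a length-$6$ factor of some image if and only if it matches one of these finitely many templates.

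It then remains to prove that no two templates with distinct offsets $p \neq \tilde p$ can coincide; granting this, the common value of $p$ over all matching templates supplies the required residue. Here the subscripts do most of the work. From Notation~\ref{not:phi} the first letter of $\varphi(n_j)$ has subscript $-2j \bmod 8$, so the subscript of $u_i$ equals $(-2j + p + i) \bmod 8$ for $i < 6-p$ and $(-2j' + i - 6 + p)\bmod 8$ otherwise; moreover, by part~\ref{even-subscript} of Lemma~\ref{lem:various-properties} the even-subscript letters of any window are rigidly the corresponding members of $\{0_0, 0_2, 1_4, 1_6\}$. Matching the leading subscript $(-2j+p)\bmod 8$ across two templates already restricts the candidate offsets to a short list—for instance, leading subscript $4$ admits only $p \in \{0,2,4\}$, whose first blocks contribute the fixed prefixes $1_4\,1_5\,1_6\,0_7\,0_0$, $1_4\,0_5\,1_6$, and $1_4$. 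One then checks that any two such candidates disagree at some fixed (non-free) position, regardless of the free entry and of $j'$; in this example $p=0$ and $p=2$ already split at the second letter, while separating $p=4$ requires reading deeper into the window.

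The main obstacle is precisely this last disambiguation: ruling out, \emph{uniformly in the free value}, every coincidence between templates of different offsets, including cases where the distinguishing position lies near the end of the window. I expect to organize it as a finite case analysis driven by the leading subscript (at most three candidate offsets per case), discharging the residual comparisons of fixed letters by the \emph{Mathematica} computation used elsewhere in the paper. Establishing the reduction to finitely many single-free-value templates is the conceptual crux; the subsequent verification is routine but must be carried out carefully so that the conclusion holds for all $v \in \Sigma_8^*$ at once.
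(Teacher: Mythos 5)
Your proposal is correct and follows essentially the same route as the paper: both rest on the observation that a length-$6$ window meets at most two consecutive images of $\varphi$ and contains exactly one value-dependent letter (the sixth-column letter of the first image), and both then compare the finitely many resulting forms --- grouped by the subscript of the leading letter, which limits the candidate offsets to at most three --- verifying pairwise that forms at incongruent offsets cannot be instantiated to the same word. The only difference is presentational: the paper carries out the pairwise comparison by hand for the representative leading subscript $2$ and declares the remaining cases analogous, whereas you parametrize the templates by $(p,j,j')$ explicitly and propose to discharge the same finite check computationally.
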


\begin{proof}
Let $v$ be a word in $\Sigma_8^*$.
Note that $v$ is not necessarily subscript-increasing.
We look at occurrences of length-$6$ factors in $\varphi(v)$.
There are eight cases, depending on the subscript of the initial letter.
We write out the details for the subscript $2$.
There are four possible forms for the length-$6$ factor in this case.
The other cases are analogous, some of which involve five possible forms.

Consider a length-$6$ factor of $\varphi(v)$ whose initial letter is of the form $n_2$.
Then in fact the initial letter is $0_2$, since this is the only letter in $\varphi(v)$ with subscript $2$.
There are four forms depending on which column the letter $0_2$ is, namely
\[
\begin{array}{cccccccc}
u & = & 0_2 & 0_3 & 1_4 & (n+3)_5 & \cdots \\
\bar{u} & = & 0_2 & 0_3 & 1_4 & (\bar{n}+1)_5 & \cdots\\
\bar{\bar{u}} & = & 0_2 & (\bar{\bar{n}}+2)_3 & \cdots \\
\bar{\bar{\bar{u}}} & = & 0_2 & 1_3 & 1_4 & 0_5 & 1_6 & (\bar{\bar{\bar{n}}}+2)_7.
\end{array}
\]
If $0_2$ appears in the third column, then the definition of $\varphi$ implies that the factor is of the form $u$ or $\bar{u}$.
If $0_2$ appears in the fifth column, then it is of the form $\bar{\bar{u}}$.
If $0_2$ appears in the first column, then it is of the form $\bar{\bar{\bar{u}}}$.
For each pair of these factors, we show that either they are unequal or they occur at positions that are equivalent modulo $6$.

If $u=\bar{u}$, then they only occur at positions that are equivalent modulo $6$ because they are in the same column.

We directly see that the letters with subscript $3$ in $u$ (resp., $\bar{u}$) and $\bar{\bar{\bar{u}}}$ do not match, so they are different length-$6$ factors.

Next we compare $u$ (resp., $\bar{u}$) with $\bar{\bar{u}}$.
For them to be equal, we have to attach, in $\bar{\bar{u}}$, a prefix of one of the length-$6$ images of $\varphi$ starting with $1_4$, namely $\varphi(m_2)$ or $\varphi(m_6)$.
Both cases give
\[
\begin{array}{ccccccccc}
u & = & 0_2 & 0_3 & 1_4 & (n+3)_5 & \cdots \\
\bar{u} & = & 0_2 & 0_3 & 1_4 & (\bar{n}+1)_5 & \cdots \\
\bar{\bar{u}} & = & 0_2 & (\bar{\bar{n}}+2)_3 & 1_4 & 1_5 & 1_6 & 0_7.
\end{array}
\]
For $u$ or $\bar{u}$ to be equal to $\bar{\bar{u}}$, we must have
\[
\begin{array}{cccccccc}
u & = & 0_2 & 0_3 & 1_4 & (n+3)_5 & 1_6 & 1_7\phantom{,} \\
\bar{u} & = & 0_2 & 0_3 & 1_4 & (\bar{n}+1)_5 & 1_6 & 1_7\phantom{,} \\
\bar{\bar{u}} & = & 0_2 & (\bar{\bar{n}}+2)_3 & 1_4 & 1_5 & 1_6 & 0_7.
\end{array}
\]
We now see that the letters with subscript $7$ of $u$ (resp., $\bar{u}$) and $\bar{\bar{u}}$ are not the same, so they are different length-$6$ factors.

Finally we compare $\bar{\bar{u}}$ with $\bar{\bar{\bar{u}}}$.
For them to be equal, the same argument shows
\[
\begin{array}{cccccccc}
\bar{\bar{u}} & = & 0_2 & (\bar{\bar{n}}+2)_3 & 1_4 & 1_5 & 1_6 & 0_7 \\
\bar{\bar{\bar{u}}} & = & 0_2 & 1_3 & 1_4 & 0_5 & 1_6 & (\bar{\bar{\bar{n}}}+2)_7.\\
\end{array}
\]
The letters with subscript $5$ of $\bar{\bar{u}}$ and $\bar{\bar{\bar{u}}}$ do not match, so they are different length-$6$ factors.
\end{proof}

\begin{lemma}\label{lem:5-4-powers-length-dvi-by-6}
Let $\ell\ge 6$ be an integer.
Let $w$ be a subscript-increasing word on $\Sigma_8$.
If $\varphi(w)$ contains a $5/4$-power of length $5\ell$, then $\ell$ is divisible by $6$.
\end{lemma}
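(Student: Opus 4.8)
The plan is to exploit the two identical occurrences of the ``base'' block of the power together with Lemma~\ref{lem:Phi-Locates-6-Length}. Suppose $\varphi(w)$ contains a $5/4$-power of length $5\ell$. As noted after the definition of fractional powers, such a power has the form $xyx$ with $\size{x} = \ell$ and $\size{y} = 3\ell$; say this factor begins at position $p$ of $\varphi(w)$. The two copies of $x$ then begin at positions $p$ and $p + 4\ell$, since the second copy is preceded by $xy$, of length $4\ell$.

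First I would extract divisibility by $3$. Because $\ell \ge 6$, the word $x$ has a length-$6$ prefix $u$, and $u$ occurs in $\varphi(w)$ at both positions $p$ and $p + 4\ell$. Since $w$ is a word on $\Sigma_8$, the word $\varphi(w)$ is an image under $\varphi$, so Lemma~\ref{lem:Phi-Locates-6-Length} applies: all occurrences of the length-$6$ word $u$ in $\varphi(w)$ begin at positions congruent modulo $6$. Hence $p \equiv p + 4\ell \pmod 6$, that is $4\ell \equiv 0 \pmod 6$, which forces $3 \mid \ell$.

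Next I would extract divisibility by $2$ from the subscripts. Since $w$ is subscript-increasing, so is $\varphi(w)$, and therefore the subscript of the letter in position $i$ of $\varphi(w)$ depends only on $i$ modulo $8$. The two copies of $x$ are the same word on $\Sigma_8$, so their first letters are the identical letter and in particular carry the same subscript. As these first letters sit at positions $p$ and $p + 4\ell$, we get $4\ell \equiv 0 \pmod 8$, whence $2 \mid \ell$. Combining the two congruences gives $6 \mid \ell$, as desired.

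The conceptual point---and the only place where care is needed---is that neither ingredient suffices on its own: the locating lemma controls positions only modulo $6$ and yields just $4\ell \equiv 0 \pmod 6$ (equivalently $3 \mid \ell$), while the subscript-increasing structure yields only $2 \mid \ell$; it is the interplay of these two independent congruences that produces divisibility by $6$. The remaining verifications---that the length-$6$ prefix $u$ genuinely lies inside $\varphi(w)$ at both positions (guaranteed by $\ell \ge 6$) and that the two copies of $x$ are letter-for-letter equal (immediate from the definition of a $5/4$-power)---are routine.
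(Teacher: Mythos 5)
Your proposal is correct and follows essentially the same route as the paper: the subscript-increasing structure forces $8 \mid 4\ell$ (hence $2 \mid \ell$), and Lemma~\ref{lem:Phi-Locates-6-Length} applied to the length-$6$ prefix of $x$ forces $6 \mid 4\ell$ (hence $3 \mid \ell$), giving $6 \mid \ell$. The paper phrases the conclusion as $24 \mid \size{xy}$ rather than as two separate congruences on $\ell$, but the argument is the same.
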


\begin{proof}
Assume that $\varphi(w)$ contains a $5/4$-power $xyx$ with $\lvert x \rvert = \ell$ and $\lvert y \rvert = 3\ell$.
Since the first letters of the two occurrences of $x$ are the same, their subscripts are equal.
Since $w$ is subscript-increasing by assumption, then $\varphi(w)$ is subscript-increasing, so $\lvert xy \rvert$ is divisible by $8$.
Since $\lvert xy \rvert = 4\ell$, this implies $\ell$ is even.
Since $\size{x} \geq 6$, Lemma~\ref{lem:Phi-Locates-6-Length} implies that $\lvert xy \rvert$ is also divisible by $6$.
Consequently, $\lvert xy \rvert$ is divisible by $24$, so $\ell$ is divisible by $6$.
\end{proof}

\section{Pre-$5/4$-power-freeness}\label{sec:pre-5/4-power-freeness}

A morphism $\mu$ on an alphabet $\Sigma$ is \emph{$a/b$-power-free} if $\mu$ preserves $a/b$-power-freeness, that is, for all $a/b$-power-free words $w$ on $\Sigma$, $\mu(w)$ is also $a/b$-power-free.
Previously studied words $\word_{a/b}$~\cite{Guay--Shallit, Rowland--Shallit, Pudwell--Rowland} have all been described by $a/b$-power-free morphisms.
However, the morphism $\varphi$ defined in Notation~\ref{not:phi} is not $5/4$-power-free.
Indeed for any integers $n,\bar{n} \in \Z$, the word $0_4 n_5 \bar{n}_6$ is $5/4$-power-free, but $\varphi(0_4 n_5 \bar{n}_6)$ contains the length-$10$ factor
\[
1_4 1_5 1_6 1_7 0_0 0_1 0_2 (n+2)_3 1_4 1_5,
\]
which is a $5/4$-power.
Therefore, to prove that $\word_{5/4}$ is $5/4$-power-free, we use a different approach.
We still need to guarantee that there are no $5/4$-powers in certain images $\varphi(w)$.
Specifically, we would like all factors $xyx'$ of $w$ with $\size{x}=\frac{1}{3}\size{y}=\size{x'}$ to satisfy $\varphi(x) \neq \varphi(x')$.
We use the following concept.

\begin{definition}
A word $w$ on $\Sigma_8$ is a \emph{pre-$5/4$-power} if $\varphi(w)$ is a $5/4$-power.
\end{definition}

A nonempty word $w$ on $\Sigma_8$ is a pre-$5/4$-power if and only if $w = x y x'$ for some $x, y, x'$ with $\size{x}=\frac{1}{3}\size{y}=\size{x'}$ such that $\varphi(x) = \varphi(x')$.
The next lemma follows from the definition of $\varphi$.

\begin{lemma}\label{lem:equality}
Two letters $n_i, \bar{n}_j \in \Sigma_8$ satisfy $\varphi(n_i) = \varphi(\bar{n}_j)$ if and only if $i - j \in \{-4, 0, 4\}$ and
\[
	n - \bar{n} =
	\begin{cases}
		\frac{1}{2} (i - j)	& \text{if $i$ is even} \\
		0			& \text{if $i$ is odd}.
	\end{cases}
\]
\end{lemma}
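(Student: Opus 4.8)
The plan is to prove Lemma~\ref{lem:equality} by direct case analysis on the parities of the subscripts $i$ and $j$, exploiting the structure visible in Notation~\ref{not:phi}. The key observation is that the first five letters of $\varphi(n_j)$ depend only on $j$ (not on $n$), while the sixth letter is $(n+d_j)_{i(j)}$ where $d_j$ and the final subscript $i(j)$ are fixed constants determined by $j$. So $\varphi(n_i) = \varphi(\bar n_j)$ forces two things: the first five columns must agree, and the sixth letters must agree. I would first tabulate, for each subscript $0 \le j \le 7$, the sequence of five subscripts appearing in the first five columns of $\varphi(n_j)$, together with the final subscript $i(j)$ and the increment $d_j$. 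Inspecting the definition, the images fall into two families: the ``even'' images $\varphi(n_0),\varphi(n_2),\varphi(n_4),\varphi(n_6)$ and the ``odd'' images $\varphi(n_1),\varphi(n_3),\varphi(n_5),\varphi(n_7)$, with $\varphi(n_0)$ and $\varphi(n_4)$ sharing the same first five columns (and likewise $\varphi(n_1),\varphi(n_5)$ and $\varphi(n_2),\varphi(n_6)$ and $\varphi(n_3),\varphi(n_7)$), differing only in $d_j$.

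Next I would establish the forward direction. Suppose $\varphi(n_i) = \varphi(\bar n_j)$. Comparing the first columns (which carry the letters $0_0, 1_6, 1_4, 0_2, 0_0, 1_6, 1_4, 0_2$ for $j = 0,\dots,7$ respectively) pins down the relationship between $i$ and $j$: the first-column letter repeats exactly when $i \equiv j \pmod 4$, so equality of images already requires $i \equiv j \pmod 4$, i.e.\ $i - j \in \{-4,0,4\}$ once we remember $0 \le i,j \le 7$. I would then split into the case $i$ even and the case $i$ odd. When $i$ is odd, the four odd subscripts $1,3,5,7$ have increments $d_1 = 2, d_3 = 2, d_5 = 2, d_7 = 2$, all equal to $2$, and the final subscripts match whenever $i \equiv j \pmod 4$; equating the sixth letters $(n+2)_{i(i)} = (\bar n + 2)_{i(j)}$ then yields $n = \bar n$, giving $n - \bar n = 0 = \tfrac12(i-j) \cdot 0$ as required. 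When $i$ is even, the increments are $d_0 = 3, d_4 = 1$ and $d_2 = 3, d_6 = 1$; for $i \equiv j \pmod 4$ with $i \ne j$ the pair is $\{0,4\}$ or $\{2,6\}$, and equating $(n + d_i)_{\bullet} = (\bar n + d_j)_{\bullet}$ gives $n - \bar n = d_j - d_i = \pm 2$, which matches $\tfrac12(i - j) = \pm 2$; when $i = j$ it gives $n = \bar n = \tfrac12 \cdot 0$. The converse direction is an immediate check: given the stated relations on $(i,j)$ and $(n,\bar n)$, one verifies column by column that the two images coincide.

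The routine but essential bookkeeping is verifying that the first five columns genuinely agree in each surviving case and do \emph{not} agree in any case with $i \not\equiv j \pmod 4$; this is what rules out spurious equalities and is where I would be most careful, since the even and odd images interleave the letters $0$ and $1$ differently. I expect the main (very mild) obstacle to be organizing the parity split cleanly so that the single formula $n - \bar n = \tfrac12(i-j)$ for even $i$ and $n - \bar n = 0$ for odd $i$ emerges uniformly, rather than checking eight-by-eight pairs by brute force. Concretely, I would reduce the $8 \times 8$ table of pairs to the four diagonal-type cases $i - j \in \{-4, 0, 4\}$ and then read off the increment difference $d_j - d_i$, noting that for odd subscripts this difference is always $0$ while for even subscripts it equals $\tfrac12(i-j)$ because $d_0 - d_4 = 2$ and $d_2 - d_6 = 2$ correspond exactly to the subscript gap of $4$. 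Since everything reduces to finitely many explicit comparisons dictated by Notation~\ref{not:phi}, the lemma follows directly from the definition, as claimed.
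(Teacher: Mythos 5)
Your proposal is correct: the paper gives no proof at all, stating only that the lemma ``follows from the definition of $\varphi$,'' and your direct inspection of Notation~\ref{not:phi} --- matching the first five columns to force $i \equiv j \pmod 4$, then comparing the increments $d_0 = d_2 = 3$, $d_4 = d_6 = 1$, and $d_j = 2$ for odd $j$ to get the stated formula for $n - \bar{n}$ --- is exactly the intended verification. All the specific values you cite check out against the definition of $\varphi$.
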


We have the following characterization of subscript-increasing pre-$5/4$-powers.

\begin{lemma}\label{lem:pre-5-4-power}
A nonempty subscript-increasing word $w$ on $\Sigma_8$ is a pre-$5/4$-power if and only if $w=x y x'$ for some $x, y, x'$ with $\size{x}=\frac{1}{3}\size{y}=\size{x'}$ such that
\begin{enumerate}
\item\label{equal condition}
if the sequences of subscripts in $x$ and $x'$ are equal, then $x = x'$, and
\item\label{differ by 4 condition}
if the sequences of subscripts in $x$ and $x'$ differ by $4$, then the $m$th letters $x(m)$ and $x'(m)$ satisfy
\[
	\tau(x(m)) - \tau(x'(m)) \in
	\begin{cases}
		\{-2, 2\}	& \text{if the subscript of $x(m)$ is even} \\
		\{0\}		& \text{if the subscript of $x(m)$ is odd}
	\end{cases}
\]
for all $m \in \{0, 1, \dots, \size{x} - 1\}$.
\end{enumerate}
\end{lemma}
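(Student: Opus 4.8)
The plan is to start from the letter-by-letter reformulation of the equation $\varphi(x) = \varphi(x')$ and translate it into the subscript conditions via Lemma~\ref{lem:equality}, using the subscript-increasing hypothesis to rule out every subscript difference except $0$ and $4$. By the characterization recorded just before Lemma~\ref{lem:equality} (a nonempty $w$ is a pre-$5/4$-power if and only if $w = xyx'$ with $\size{x} = \frac13 \size{y} = \size{x'}$ and $\varphi(x) = \varphi(x')$), it suffices to show that, for a subscript-increasing $w = xyx'$ with these length constraints, the equality $\varphi(x) = \varphi(x')$ is equivalent to the conjunction of conditions~\eqref{equal condition} and~\eqref{differ by 4 condition}. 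Write $\ell = \size{x} = \size{x'}$. Since $\varphi$ is $6$-uniform and $\size{x} = \size{x'}$, the images $\varphi(x)$ and $\varphi(x')$ are concatenations of $\ell$ blocks of length $6$ aligned at block boundaries; hence $\varphi(x) = \varphi(x')$ if and only if $\varphi(x(m)) = \varphi(x'(m))$ for every $m \in \{0, 1, \dots, \ell - 1\}$.

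Next I would pin down the subscript difference. In $w = xyx'$ the factor $x'$ begins $4\ell$ positions after $x$, so because $w$ is subscript-increasing the subscript of $x'(m)$ exceeds that of $x(m)$ by $4\ell \bmod 8$, independently of $m$. Thus this common difference $\delta$ equals $0$ when $\ell$ is even and $4$ when $\ell$ is odd; in particular the subscript sequences of $x$ and $x'$ are either identical everywhere or differ everywhere by $4$, and no other case can occur. This dichotomy is exactly what makes conditions~\eqref{equal condition} and~\eqref{differ by 4 condition} exhaustive, and is the one genuinely structural (as opposed to purely computational) point in the argument.

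With this in hand I would apply Lemma~\ref{lem:equality} in each case. By that lemma, $\varphi(x(m)) = \varphi(x'(m))$ forces the subscript difference to lie in $\{-4, 0, 4\}$, which is consistent with $\delta \in \{0, 4\}$. When $\delta = 0$, Lemma~\ref{lem:equality} gives $\tau(x(m)) = \tau(x'(m))$ for all $m$, i.e.\ $x = x'$, which is condition~\eqref{equal condition} (while condition~\eqref{differ by 4 condition} holds vacuously, its hypothesis being false). When $\delta = 4$, the same lemma yields $\tau(x(m)) - \tau(x'(m)) \in \{-2, 2\}$ if the subscript of $x(m)$ is even and $=0$ if it is odd, which is precisely condition~\eqref{differ by 4 condition} (while~\eqref{equal condition} holds vacuously). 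Conversely, assuming~\eqref{equal condition} and~\eqref{differ by 4 condition}, the same case split runs backward: if $\delta = 0$ then~\eqref{equal condition} gives $x = x'$ and hence $\varphi(x) = \varphi(x')$ trivially, while if $\delta = 4$ then~\eqref{differ by 4 condition} together with Lemma~\ref{lem:equality} gives $\varphi(x(m)) = \varphi(x'(m))$ for each $m$, hence $\varphi(x) = \varphi(x')$.

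I expect the only delicate step to be the bookkeeping of the second paragraph, namely confirming that the offset between $x$ and $x'$ is $4\ell$ and that $4\ell \bmod 8 \in \{0, 4\}$; everything else is a direct translation through Lemma~\ref{lem:equality}. A minor care point is matching the even/odd branch of Lemma~\ref{lem:equality} to the subscript of $x(m)$ rather than that of $x'(m)$, but this is harmless since subscripts differing by $4$ have the same parity.
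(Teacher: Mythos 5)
Your proof is correct and follows essentially the same route as the paper's: reduce $\varphi(x)=\varphi(x')$ to the letterwise equations $\varphi(x(m))=\varphi(x'(m))$ (valid by $6$-uniformity), observe that the subscript offset $4\size{x}\bmod 8$ forces the two subscript sequences to be equal or to differ uniformly by $4$, and translate each case through Lemma~\ref{lem:equality}; your write-up is merely more explicit about the block alignment and the parity dichotomy. The one soft spot in your converse direction --- Condition~\ref{differ by 4 condition} only records $\tau(x(m))-\tau(x'(m))\in\{-2,2\}$, whereas Lemma~\ref{lem:equality} actually pins down the sign as $\tfrac{1}{2}(i-j)$ --- is an imprecision already present in the lemma's statement and in the paper's own proof, and it is harmless for the paper's purposes since every later application uses only the forward (``only if'') direction.
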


\begin{proof}
Let $x, y, x'$ be nonempty words on $\Sigma_8$ such that $x y x'$ is subscript-increasing, $\size{x}=\frac{1}{3}\size{y}=\size{x'}$, and $\varphi(x) = \varphi(x')$.
Since $\size{x y} = 4 \size{x}$, the sequences of subscripts in $x$ and $x'$ are equal or differ by $4$.
By Lemma~\ref{lem:equality}, $x y x'$ is a pre-$5/4$-power if and only if, for each $m \in \{0, 1, \dots, \size{x} - 1\}$, we have
\[
	\tau(x(m)) - \tau(x'(m)) \in
	\begin{cases}
		\{0\}		& \text{if their subscripts are equal} \\
		\{-2, 2\}	& \text{if their subscripts are even and differ by $4$} \\
		\{0\}		& \text{if their subscripts are odd and differ by $4$}.
	\end{cases}
\]
This is equivalent to Conditions~\ref{equal condition} and \ref{differ by 4 condition} in the statement.
\end{proof}

For example, the word $0_0n_1\bar{n}_2 \bar{\bar{n}}_32_4$ is a pre-$5/4$-power because $\tau(0_0) - \tau(2_4) = 0 - 2 \in \{2, -2\}$; indeed
\[
\varphi(0_0n_1\bar{n}_2 \bar{\bar{n}}_32_4)= 0_0 1_1 0_2 0_3 1_4 3_5 \varphi(n_1\bar{n}_2 \bar{\bar{n}}_3) 0_0 1_1 0_2 0_3 1_4 3_5
\]
is a $5/4$-power of length $30$.
On the other hand, the word $0_0n_1\bar{n}_2 \bar{\bar{n}}_30_4$ is not a pre-$5/4$-power because $\tau(0_0) - \tau(0_4) = 0 \notin \{2, -2\}$; indeed
\[
\varphi(0_0n_1\bar{n}_2 \bar{\bar{n}}_30_4)= 0_0 1_1 0_2 0_3 1_4 3_5 \varphi(n_1\bar{n}_2 \bar{\bar{n}}_3) 0_0 1_1 0_2 0_3 1_4 1_5
\]
is not a $5/4$-power.
Similarly, $0_1 n_2 \bar{n}_3 \bar{\bar{n}}_4 2_5$ is not a pre-$5/4$-power, because $\tau(0_1) - \tau(2_5) = -2 \notin \{0\}$; indeed
\[
	\varphi(0_1 n_2 \bar{n}_3 \bar{\bar{n}}_4 2_5)
	= 1_6 1_7 0_0 0_1 0_2 2_3 \varphi(n_1 \bar{n}_2 \bar{\bar{n}}_3) 1_6 1_7 0_0 0_1 0_2 4_3
\]
is not a $5/4$-power.

In addition to not being a pre-$5/4$-power, the word $0_0n_1\bar{n}_2 \bar{\bar{n}}_30_4$ is $5/4$-power-free since $0_0$ and $0_4$ are different letters.

\begin{proposition}\label{pro:pre-5-4-power-strong-cond}
Every $5/4$-power on $\Sigma_8$ is a pre-$5/4$-power.
\end{proposition}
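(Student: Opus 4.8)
The plan is to unwind the definitions and observe that this is the easy inclusion: a genuine $5/4$-power is exactly the special case of a pre-$5/4$-power in which the two outer blocks coincide. First I would invoke the structural description of $5/4$-powers recorded in the introduction, namely that any $5/4$-power $w$ can be written $w = x y x$ with $\size{x} = \ell \geq 1$ and $\size{y} = 3\ell$, so in particular $\size{x} = \frac{1}{3}\size{y}$.

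From here there are two equivalent routes to the conclusion. The direct route applies $\varphi$ and uses $6$-uniformity: writing $X = \varphi(x)$ and $Y = \varphi(y)$, we have $\varphi(w) = X Y X$ with $\size{X} = 6\ell$ and $\size{Y} = 18\ell = 3 \cdot 6\ell$, so $\varphi(w)$ is a $5/4$-power of length $30\ell$, and by the definition of pre-$5/4$-power this makes $w$ a pre-$5/4$-power. The second route goes through the characterization of pre-$5/4$-powers stated just above: taking $x' := x$ in the decomposition $w = x y x'$, the required conditions $\size{x} = \frac{1}{3}\size{y} = \size{x'}$ and $\varphi(x) = \varphi(x')$ both hold trivially because $x' = x$, so the characterization immediately certifies $w$ as a pre-$5/4$-power.

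I expect no real obstacle in the argument itself; the only thing to verify is that the lengths line up against the definitions, which they do. The content of the proposition is conceptual rather than computational: it records that the inclusion of $5/4$-powers into pre-$5/4$-powers is strict in the reverse direction precisely because a pre-$5/4$-power requires only $\varphi(x) = \varphi(x')$ rather than $x = x'$, as witnessed by the word $0_0 n_1 \bar{n}_2 \bar{\bar{n}}_3 2_4$, which is a pre-$5/4$-power but not a $5/4$-power. Its downstream value is that pre-$5/4$-power-freeness is a genuine strengthening of $5/4$-power-freeness, so that controlling pre-$5/4$-powers in $\s$ will in particular rule out actual $5/4$-powers in the images under $\varphi$.
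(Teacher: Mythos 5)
Your first (``direct'') route is exactly the paper's proof: apply $\varphi$ to $xyx$ and note that $\varphi(x)\varphi(y)\varphi(x)$ is again a $5/4$-power by $6$-uniformity, so $xyx$ is a pre-$5/4$-power by definition. The argument is correct and the length bookkeeping checks out; the alternative route via the characterization with $x'=x$ is an equivalent restatement, not a genuinely different approach.
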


\begin{proof}
Let $x, y$ be nonempty words on $\Sigma_8$ with $\size{x} = \frac{1}{3} \size{y}$, so that $x y x$ is a $5/4$-power.
Then $\varphi(x y x) = \varphi(x) \varphi(y) \varphi(x)$ is a $5/4$-power, so $x y x$ is a pre-$5/4$-power.
\end{proof}

Proposition~\ref{pro:pre-5-4-power-strong-cond} implies that if a word $w$ is pre-$5/4$-power-free then $w$ is $5/4$-power-free.

Let $\Gamma$ be the alphabet in Lemma~\ref{lem:various-properties}.
For all subscript-increasing words $w$ on $\Sigma_8\setminus\Gamma$, the combination of Lemmas~\ref{lem:5-4-powers-larger-6} and \ref{lem:5-4-powers-less-than-6} shows that, if $w$ is pre-$5/4$-power-free, then $\varphi(w)$ is $5/4$-power-free.
(We include Lemma~\ref{lem:5-4-powers-less-than-6} because it complements Lemma~\ref{lem:5-4-powers-larger-6}, even though we will not use it to prove that $\p\tau(\varphi(\s))$ is $5/4$-power-free.)

\begin{lemma}\label{lem:5-4-powers-larger-6}
If $w$ is a pre-$5/4$-power-free subscript-increasing word on $\Sigma_8$, then $\varphi(w)$ contains no $5/4$-power of length greater than or equal to $30$.
\end{lemma}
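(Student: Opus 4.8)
The plan is to argue by contradiction, \emph{de-substituting} a purported power back through $\varphi$ to produce a pre-$5/4$-power inside $w$ itself. Suppose $\varphi(w)$ contains a $5/4$-power $XYX$ with $\size{X}=\ell$ and $\size{Y}=3\ell$, so $\size{XYX}=5\ell\ge 30$ and hence $\ell\ge 6$. Since $w$ is subscript-increasing, Lemma~\ref{lem:5-4-powers-length-dvi-by-6} applies and gives $6\mid\ell$; write $\ell=6t$ with $t\ge 1$. As $\size{X}\ge 6$, Lemma~\ref{lem:Phi-Locates-6-Length} forces the two occurrences of $X$ to begin at the same residue modulo $6$, say at positions $6q+m$ and $6q+m+4\ell=6(q+4t)+m$ with $0\le m\le 5$. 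Writing $\varphi(w)=B_0B_1\cdots$ as the concatenation of its length-$6$ blocks $B_a=\varphi(w(a))$, the first copy of $X$ decomposes into the length-$(6-m)$ suffix of $B_q$, the full blocks $B_{q+1},\dots,B_{q+t-1}$, and the length-$m$ prefix of $B_{q+t}$; the second copy has the same decomposition with every index increased by $4t$.

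Comparing the two copies of $X$ block by block immediately yields the interior equalities $\varphi(w(q+s))=\varphi(w(q+4t+s))$ for $s=1,\dots,t-1$, together with the equality of the length-$(6-m)$ suffixes of $B_q$ and $B_{q+4t}$. The heart of the argument is to upgrade this one-sided partial agreement to the full block equality $B_q=B_{q+4t}$, and here I would use two structural features of $\varphi$ read off from Notation~\ref{not:phi}: the first five letters of $\varphi(n_j)$ depend only on $j\bmod 4$, while the subscript of the last letter of $\varphi(n_j)$ takes four distinct values according to $j\bmod 4$. Because $6-m\ge 1$, the common suffix always contains the last letter of each block, so equality of those last letters pins down the subscripts of $w(q)$ and $w(q+4t)$ modulo $4$; this makes the first five letters of $B_q$ and $B_{q+4t}$ coincide, and combined with the already-known equality of the sixth letters it gives $B_q=B_{q+4t}$, that is, $\varphi(w(q))=\varphi(w(q+4t))$.

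It then remains to assemble the contradiction. Set $x=w(q)\cdots w(q+t-1)$, $y=w(q+t)\cdots w(q+4t-1)$, and $x'=w(q+4t)\cdots w(q+5t-1)$, so that $xyx'$ is a factor of $w$ with $\size{x}=\tfrac13\size{y}=\size{x'}=t$. The equalities established above are exactly $\varphi(w(q+s))=\varphi(w(q+4t+s))$ for all $s=0,\dots,t-1$, hence $\varphi(x)=\varphi(x')$ and $\varphi(xyx')=\varphi(x)\varphi(y)\varphi(x')$ is a $5/4$-power; thus $xyx'$ is a pre-$5/4$-power in $w$, contradicting pre-$5/4$-power-freeness. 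The step I expect to be the main obstacle is the boundary step of the second paragraph: when $m\neq 0$ the power is not aligned with the block structure, and the only genuinely new input is that the mod-$4$ periodicity of the first five columns allows a partial agreement of two images to propagate to a full agreement. I note that the right-hand partial block (the length-$m$ prefixes of $B_{q+t}$ and $B_{q+5t}$) never enters the argument, since $B_{q+t}$ lies inside $\varphi(y)$ and $B_{q+5t}$ lies beyond $\varphi(x')$; this is why upgrading a single boundary suffices.
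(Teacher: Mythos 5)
Your proof is correct and takes essentially the same route as the paper's: both use Lemma~\ref{lem:Phi-Locates-6-Length} and Lemma~\ref{lem:5-4-powers-length-dvi-by-6} to align the two occurrences of $X$ with the length-$6$ blocks of $\varphi(w)$ and then de-substitute to obtain a pre-$5/4$-power in $w$. The one place you go beyond the paper is the boundary block: the paper simply shifts so that $r=0$ and asserts that the resulting word is again a $5/4$-power, whereas you explicitly prove the needed upgrade from agreement of the length-$(6-m)$ suffixes of $B_q$ and $B_{q+4t}$ to full block equality (using that the subscript of the last letter of $\varphi(n_j)$ determines $j \bmod 4$, which in turn determines the first five letters); this justification is correct and fills in a step the paper leaves implicit.
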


\begin{proof}
Proceed toward a contradiction and assume that there exists a $5/4$-power in $\varphi(w)$ of the form $xyx$ with $\size{x} = \ell$ and $\size{y} = 3\ell$ and $\ell \geq 6$.
Let $j$ be the initial position of $xyx$ in $\varphi(w)$.
Write $j=6i_1+r$ with $0\le r \le 5$.
Since $w$ is subscript-increasing, Lemma~\ref{lem:5-4-powers-length-dvi-by-6} implies that $\ell=\size{x}$ is divisible by $6$.
Then $y$ begins at position $j+ \size{x} = 6i_2+r$ and the second occurrence of $x$ begins at position $j+ \size{xy} = 6i_3+r$ for some $i_2, i_3$.
Now we shift if necessary so that $r = 0$; let $x'y'x'$ be the word of length $\size{xyx}$ starting at position $6i_1$ in $\varphi(w)$.
Let $uvu'$ be the factor of $w$ of length $\frac{1}{6} \size{x y x}$ starting at position $i_1$ such that $\size{u}=\frac{1}{3}\size{v}=\size{u'}$.
Then $\varphi(uvu') = x'y'x'$, so $uvu'$ is a pre-$5/4$-power.
This contradicts the hypothesis that $w$ is a pre-$5/4$-power-free word.
\end{proof}

\begin{lemma}\label{lem:5-4-powers-less-than-6}
Let $\Gamma$ be the alphabet in Lemma~\ref{lem:various-properties}.
If $w$ is a subscript-increasing word on $\Sigma_8\setminus\Gamma$, then $\varphi(w)$ contains no $5/4$-power of length less than or equal to $25$.
\end{lemma}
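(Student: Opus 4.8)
The plan is to exploit that a $5/4$-power of length at most $25$ is short enough to impose a rigid column structure on $\varphi(w)$. Write such a power as $xyx$ with $\size{x} = \ell$ and $\size{y} = 3\ell$, so its length is $5\ell \le 25$ and hence $\ell \le 5$. Since $w$ is subscript-increasing, so is $\varphi(w)$, and the two occurrences of $x$ begin at positions differing by $\size{xy} = 4\ell$; because their first letters are equal, their subscripts agree, which forces $4\ell \equiv 0 \pmod 8$. Thus $\ell$ is even, and only $\ell = 2$ and $\ell = 4$ remain (recall $x$ is nonempty). In both cases the column offset between the two copies of $x$, namely $4\ell \bmod 6$, equals $2$ (for $\ell = 2$) or $4$ (for $\ell = 4$); in particular it is never $0$ modulo $6$, a fact I will use repeatedly.

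The central dichotomy is whether some letter of $x$, in either copy, falls in the sixth (last) column of an image of $\varphi$. By inspection of Notation~\ref{not:phi}, every letter in the first five columns of any image $\varphi(n_j)$ has the form $0_i$ or $1_i$, whereas by Part~\ref{last-letter-phi} of Lemma~\ref{lem:various-properties} every sixth-column letter of $\varphi(w)$, arising from a source letter in $\Sigma_8 \setminus \Gamma$, has the form $m_i$ with $m \notin \{0,1\}$. Now suppose some letter of a copy of $x$, at index $m$ within $x$, lies in the sixth column. Since the column offset between the two copies is nonzero modulo $6$, the letter at index $m$ in the other copy lies in one of the first five columns, hence has value $0$ or $1$. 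As the two copies of $x$ must agree letter by letter, this equates a value $\notin\{0,1\}$ with a value in $\{0,1\}$, a contradiction. Consequently no sixth-column letter can occur inside $x$.

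For $\ell = 4$ this already finishes the argument: the two copies of $x$ occupy, modulo $6$, the column sets $\{c, c+1, c+2, c+3\}$ and $\{c+4, c+5, c, c+1\}$, whose union is all six columns, so a sixth-column letter necessarily occurs within $x$ and the previous paragraph yields a contradiction. For $\ell = 2$ the copies occupy $\{c, c+1\}$ and $\{c+2, c+3\}$, and avoiding the sixth column in both forces the starting column $c$ to be the first or the second column. In each of these two configurations the copies lie in consecutive images $\varphi(a)$ and $\varphi(b)$, where $a,b$ are adjacent letters of $w$; since the first five columns of $\varphi(n_j)$ depend only on $j \bmod 4$, and the subscript-increasing property pins down the subscripts, there are just the four residues of $j \bmod 4$ to inspect in each configuration. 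In every one of these finitely many cases the two length-$2$ candidates, which share the same subscripts by construction, differ in the value of some letter, so $x \neq x'$ and no $5/4$-power arises.

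The only real obstacle is the bookkeeping in the residual $\ell = 2$ cases: one must track, across two consecutive images, precisely which fixed letters of the first five columns are being compared and confirm that the subscript-increasing constraint aligns them as claimed. This is a short finite verification that I expect to dispatch by direct reading of the eight rows of $\varphi$; all of the conceptual content lies in the reduction to $\ell \in \{2,4\}$ and in the sixth-column contradiction powered by Part~\ref{last-letter-phi} of Lemma~\ref{lem:various-properties}.
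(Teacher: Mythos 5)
Your proof is correct, and it reorganizes the verification in a way that is genuinely leaner than the paper's. The paper reduces to $\ell \in \{2,4\}$ exactly as you do, but then runs a brute-force check: it slides a window of length $5\ell$ through the circular word $\varphi(n_0)\varphi(n_1)\cdots\varphi(n_7)$, compares the length-$\ell$ prefix and suffix of each window, and, whenever a symbolic sixth-column letter $(n+d)_i$ is forced to equal a concrete letter, solves for $n$ and consults a lookup table to record the offending source letters; the set $\Gamma$ is, by construction, precisely the set of letters that arise this way. You instead take $\Gamma$ as given through Part~\ref{last-letter-phi} of Lemma~\ref{lem:various-properties} and observe that the column offset $4\ell \bmod 6 \in \{2,4\}$ is never zero, so any index at which one copy of $x$ meets the sixth column is matched in the other copy by a first-five-column letter of value $0$ or $1$; Part~\ref{last-letter-phi} then kills that case outright, the column arithmetic shows the sixth column is unavoidable when $\ell = 4$, and for $\ell = 2$ it is avoidable only for starting columns $c \in \{0,1\}$, leaving the eight concrete comparisons you describe (all of which do fail, each at the odd-subscript letter, since the first five columns depend only on $j \bmod 4$). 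Your route buys transparency --- it makes visible that $\Gamma$ matters exactly because it is the set of preimage letters whose sixth-column image is $0_i$ or $1_i$, and it shrinks the explicit case analysis from $2 \times 48$ window positions to eight pairs of length-$2$ words --- while the paper's route has the advantage that the same sliding-window computation simultaneously serves as the derivation of $\Gamma$ itself.
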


\begin{proof}
Given $(n+d)_i$ where $d\in \{1,2,3\}$ and $0\le i \le 7$, we will need to know the values of $j\in\{0,1,\dots,7\}$ for which the letter $(n+d)_i$ is the last letter of $\varphi(n_j)$.
This is given by the following table.
\begin{equation}\label{eqn:phi-inverse-lookup}
\begin{array}{c|cccccc}
(d,i) & (1,1) & (1,5) & (2,3) & (2,7) & (3,1) & (3,5) \\
\hline
j & 6 & 4 & 1,5 & 3,7 & 2 & 0
\end{array}
\end{equation}

As in the proof of Lemma~\ref{lem:5-4-powers-length-dvi-by-6}, $\ell$ is even.
It suffices to look at $\ell=2$ and $\ell=4$.
Since $w$ is subscript-increasing, we may consider the word
\[
\varphi(n_0) \varphi(n_1) \varphi(n_2) \varphi(n_3) \varphi(n_4) \varphi(n_5)
\varphi(n_6) \varphi(n_7)
\]
circularly and slide a window of length $5\ell$ through this word.
Here $n$ is a symbol, not an integer.
For each factor of length $5 \ell$, we compare its prefix of length $\ell$ to its suffix of length $\ell$.
If they are elements of $\Sigma_8^*$ (that is, they do not involve $n$), then we check that they are unequal.
Otherwise, for each pair of letters that involves $n$, we solve for $n$, and we use Table~\eqref{eqn:phi-inverse-lookup} to determine the possible subscripts $j$ of $n$.
The set $\Gamma$ is precisely the set of letters $n_j$ that arise.
\end{proof}

As an example of the algorithm described in the previous proof, for $\ell=2$ one possible form of $5/4$-powers of length $10$ is
\[
0_2 (n + 2)_3 1_4 1_5 1_6 0_7 0_0 (\bar{n} + 3)_1 0_2 1_3.
\]
We solve $(n + 2)_3 = 1_3$ and get $n=-1$.
Since $d=2$ and the subscript is $i=3$, then according to Table~\eqref{eqn:phi-inverse-lookup} we find $j\in\{1,5\}$.
Therefore $\varphi(-1_1 \bar{n}_2 \bar{\bar{n}}_3)$, $\varphi(-1_1 \bar{n}_2 \bar{\bar{n}}_7)$, $\varphi(-1_5 \bar{n}_2 \bar{\bar{n}}_3)$, and $\varphi(-1_5 \bar{n}_2 \bar{\bar{n}}_7)$ all contain a $5/4$-power of length $10$ (even though $-1_1 \bar{n}_2 \bar{\bar{n}}_3$ is the only subscript-increasing preimage).
In this case, we add $-1_1$ and $-1_5$ to $\Gamma$.

We need a stronger result than Lemmas~\ref{lem:5-4-powers-larger-6} and \ref{lem:5-4-powers-less-than-6} provide.
Namely, since we iterate $\varphi$, we need that images under $\varphi$ are not just $5/4$-power-free but are in fact pre-$5/4$-power-free.

\begin{proposition}\label{pro:pre-5-4-power-phi}
Let $\Gamma$ be the alphabet in Lemma~\ref{lem:various-properties}.
If $w$ is a pre-$5/4$-power-free subscript-increasing word on $\Sigma_8\setminus\Gamma$, then $\varphi(w)$ is pre-$5/4$-power-free.
\end{proposition}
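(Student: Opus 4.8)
The plan is to argue by contradiction: suppose $\varphi(w)$ contains a factor $v$ that is a pre-$5/4$-power. Since $w$ is subscript-increasing, so are $\varphi(w)$ and its factor $v$, so Lemma~\ref{lem:pre-5-4-power} lets me write $v = x y x'$ with $\size{x} = \frac{1}{3}\size{y} = \size{x'}$, where the sequences of subscripts in $x$ and $x'$ are either equal or differ by $4$. I will show that the equal case contradicts the $5/4$-power-freeness of $\varphi(w)$, while the differ-by-$4$ case first forces $\size{x} = 1$ and then forces a letter of $w$ to lie in $\Gamma$.

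If the subscripts of $x$ and $x'$ are equal, then Condition~\ref{equal condition} of Lemma~\ref{lem:pre-5-4-power} gives $x = x'$, so $v = xyx$ is a genuine $5/4$-power in $\varphi(w)$. But $\varphi(w)$ is $5/4$-power-free by the combination of Lemmas~\ref{lem:5-4-powers-larger-6} and \ref{lem:5-4-powers-less-than-6}, a contradiction. Now suppose the subscripts differ by $4$. I claim $x$ has no letter with an even subscript. Indeed, if $x(m)$ has an even subscript then so does $x'(m)$ (their subscripts differ by the even number $4$), and by Part~\ref{even-subscript} both lie in $\{0_0, 0_2, 1_4, 1_6\}$; the only pairs in this set whose subscripts differ by $4$ are $(0_0, 1_4)$, $(0_2, 1_6)$ and their reverses, for which $\tau(x(m)) - \tau(x'(m)) = \pm 1 \notin \{-2, 2\}$, violating Condition~\ref{differ by 4 condition}. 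Since $x$ is subscript-increasing, its subscripts alternate in parity, so having no even subscript forces $\size{x} = 1$.

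It remains to rule out $\size{x} = 1$, so write $v = a\,y\,b$ with $a, b$ single letters; their subscripts are odd and differ by $4$, and Condition~\ref{differ by 4 condition} now requires $\tau(a) = \tau(b)$. The definition of $\varphi$ shows that in every image $\varphi(n_j)$ the subscripts are six consecutive residues beginning with an even one, so columns $1, 3, 5$ carry even subscripts and columns $2, 4, 6$ carry odd ones; hence $a$ occupies column $2$, $4$, or $6$, and $b$ (four letters later) occupies the matching column of the same or the next $\varphi$-block. This yields three configurations. If $a$ is in column $4$ and $b$ in column $2$, then $\tau(a) = 0 \neq 1 = \tau(b)$, so $\tau(a) = \tau(b)$ already fails. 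In the other two configurations ($a$ in column $2$ with $b$ in column $6$, or $a$ in column $6$ with $b$ in column $4$), imposing $\tau(a) = \tau(b)$ and solving for the preimage letter $w(i) = n_j$ gives, in each of the eight subscript cases, a letter $n_j$ that is one of the sixteen elements of $\Gamma$. Since $w$ is a word on $\Sigma_8\setminus\Gamma$, none of these occurs, which is the desired contradiction.

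The main obstacle is this final $\size{x} = 1$ case: it is precisely where the restriction to $\Sigma_8 \setminus \Gamma$ is needed, and verifying it amounts to checking that $\Gamma$ is exactly the set of letters whose images place two equal-$\tau$, odd-subscript letters four positions apart (equivalently, that $\Gamma$ captures all length-$5$ pre-$5/4$-powers). The earlier cases reduce cleanly to facts already in hand, but matching the $\size{x}=1$ configurations against the explicit sixteen-letter definition of $\Gamma$ is the crux of the argument.
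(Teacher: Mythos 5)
Your proof is correct, and it follows the paper's proof closely in the differ-by-4 (odd $\size{x}$) case: the reduction via Part~\ref{even-subscript} of Lemma~\ref{lem:various-properties} to $\size{x}=1$, the three column configurations, and the appeal to the fact that the last letter of $\varphi(n_j)$ is never $0_i$ or $1_i$ for $n_j \notin \Gamma$ (Part~\ref{last-letter-phi}) are exactly the paper's Case~1. Where you genuinely diverge is the equal-subscripts (even $\size{x}$) case. The paper splits this according to $\size{xy} \bmod 6$: when the residue is $2$ or $4$ it compares the second, fourth, and sixth columns letter by letter (again via Part~\ref{last-letter-phi}), and only when the residue is $0$ does it use your argument --- that a coincidence $x = x'$ would produce a genuine $5/4$-power, contradicting Lemma~\ref{lem:5-4-powers-larger-6}. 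You instead collapse the entire equal-subscripts case into one step: a pre-$5/4$-power with aligned subscripts is an honest $5/4$-power by Condition~\ref{equal condition} of Lemma~\ref{lem:pre-5-4-power}, and $\varphi(w)$ has none, by Lemma~\ref{lem:5-4-powers-larger-6} for lengths $\geq 30$ together with Lemma~\ref{lem:5-4-powers-less-than-6} for lengths $10$ and $20$. This is valid and shorter, but note that it makes essential use of Lemma~\ref{lem:5-4-powers-less-than-6}, the computational check on the circular word $\varphi(n_0)\cdots\varphi(n_7)$, which the paper proves but explicitly says it does not use in its main chain of reasoning; the paper's Case~2.1 buys a purely symbolic treatment of the short even lengths at the cost of an extra subcase, while your route buys brevity at the cost of leaning on that finite verification.
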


\begin{proof}
Since $w$ is subscript-increasing, so is $\varphi(w)$.
Let $xyx'$ be a nonempty factor of $\varphi(w)$ with $\size{x}=\frac{1}{3}\size{y}=\size{x'}$.
We show that $xyx'$ is not a pre-$5/4$-power.
We consider two cases depending on the parity of $\size{x}$.

\textbf{Case 1.}
Suppose that $\size{x}$ is odd.
Then the sequences of subscripts in $x$ and $x'$ differ by $4$.

If $x$ contains a letter with an even subscript, then the corresponding letter in $x'$ also has an even subscript.
This pair of corresponding letters belongs to $\{(0_0,1_4),(0_2,1_6),(1_4,0_0),(1_6,0_2)\}$ by Part~\ref{even-subscript} of Lemma~\ref{lem:various-properties}.
Therefore the images under $\tau$ of the two letters differ by $\pm 1 \notin \{-2, 2\}$, so $x y x'$ is not a pre-$5/4$-power by Lemma~\ref{lem:pre-5-4-power}.

If $x$ does not contain a letter with an even subscript, then $\size{x}=1$, so $x$ and $x'$ are letters with odd subscripts.
Then $(x,x')$ is of one of the forms
\[
\begin{array}{ccccc}
(1_1,(n+3)_5) && (0_3,1_7) && ((n+3)_5,0_1) \phantom{,}\\
(1_7,(n+2)_3) && (0_1,1_5) && ((n+2)_3,0_7)\phantom{,}\\
(1_5,(n+3)_1) && (0_7,1_3) && ((n+3)_1,0_5)\phantom{,}\\
(1_3,(n+2)_7) && (0_5,1_1) && ((n+2)_7,0_3)\phantom{,}\\
(1_1,(n+1)_5) && && ((n+1)_5,0_1)\phantom{,} \\
(1_5,(n+1)_1) && && ((n+1)_1,0_5).
\end{array}
\]
Since $w$ is a word on $\Sigma_8\setminus\Gamma$, we have $\tau(x) \neq \tau(x')$ thanks to Part~\ref{last-letter-phi} of Lemma~\ref{lem:various-properties}.
Therefore $x y x'$ is not a pre-$5/4$-power by Lemma~\ref{lem:pre-5-4-power}.

\textbf{Case 2.}
Assume that $\size{x}\ge 2$ is even.
Then the sequences of subscripts in $x$ and $x'$ are equal.
By Part~\ref{even-subscript} of Lemma~\ref{lem:various-properties}, each pair of corresponding letters in $x$ and $x'$ with even subscripts belongs to $\{(0_0,0_0), (0_2,0_2), (1_4,1_4), (1_6,1_6)\}$.
To show that $x y x'$ is not a pre-$5/4$-power, we use Lemma~\ref{lem:pre-5-4-power} and show that there exists a pair $(x(m), x'(m))$ of corresponding letters with odd subscript $j$ such that $x(m) \neq x'(m)$.
Since $\size{x}$ is even and $\size{xy} = 4\size{x}$, then $\size{xy} \equiv r \mod 6$ with $r\in\{0,2,4\}$.
We break the remainder of the proof into two cases, depending on the value of $r$.

Case 2.1.
Suppose that $\size{xy} \equiv r \mod 6$ with $r\in\{2,4\}$.
If $r=2$, then $x$ contains a letter in the second (respectively, fourth or sixth) column if and only if $x'$ contains the corresponding letter in the fourth (respectively, sixth or second) column.
If $r=4$, then $x$ contains a letter in the second (respectively, fourth or sixth) column if and only if $x'$ contains the corresponding letter in the sixth (respectively, second or fourth) column.
Therefore it suffices to compare the second, fourth, and sixth columns, and we have pairs of the forms
\[
(1_j,0_j), \, (1_j,(n+d)_j), \, (0_j,1_j), \, (0_j,(n+d)_j), \, ((n+d)_j,1_j), \, ((n+d)_j,0_j)
\]
with $d\in\{1,2,3\}$.
Since $w$ is a word on $\Sigma_8\setminus\Gamma$, we have $x(m) \neq x'(m)$ thanks to Part~\ref{last-letter-phi} of Lemma~\ref{lem:various-properties}.
Therefore $x y x'$ is not a pre-$5/4$-power by Lemma~\ref{lem:pre-5-4-power}.

Case 2.2.
Assume that $\size{xy} \equiv 0 \mod 6$, which implies $\size{x}\ge 6$ since $\size{xy}=4\size{x}$ and $\size{x}$ is even.
Since $\size{xy} \equiv 0 \mod 6$, $x$ and $x'$ agree on their backgrounds, that is, letters in the first five columns.
For instance, we may have
\begin{align*}
	x &= 1_1 0_2 0_3 1_4 (n + 3)_5 1_6 \\
	y &= 1_7 0_0 0_1 0_2 (\bar{n} + 2)_3 1_4 1_5 1_6 0_7 0_0 (\bar{\bar{n}} + 3)_1 0_2 1_3 1_4 0_5 1_6 (\bar{\bar{\bar{n}}} + 2)_7 0_0 \\
	x' &= 1_1 0_2 0_3 1_4 (\bar{\bar{\bar{\bar{n}}}} + 1)_5 1_6
\end{align*}
as a factor of $\varphi(n_0 \bar{n}_1 \cdots \bar{\bar{\bar{\bar{\bar{n}}}}}_5)$.
Recall that we need to exhibit a pair of corresponding letters in $x$ and $x'$ with odd subscript $j$ such that $x(m) \neq x'(m)$.
The only possibility is that $x(m)$ and $x'(m)$ both belong to the sixth column.
Toward a contradiction, suppose that such a pair does not exist, and so $x$ and $x'$ also agree on their letters belonging to the sixth column.
In particular, $x=x'$.
We have thus found a $5/4$-power in $\varphi(w)$.
This violates Lemma~\ref{lem:5-4-powers-larger-6} since $w$ is pre-$5/4$-power-free by assumption.
Therefore there is a pair with $x(m) \neq x'(m)$, so $x y x'$ is not a pre-$5/4$-power by Lemma~\ref{lem:pre-5-4-power}.
\end{proof}

Now we return to studying the particular words $\z$ and $\s =\z \varphi(\z) \varphi^2(\z) \cdots$ from Definition~\ref{p and z definition}.
First we show that $\s$ is pre-$5/4$-power-free.
As a consequence, we will show that $\p\tau(\varphi(\s))$ is $5/4$-power-free in Section~\ref{sec:5/4-power-freeness}.
The words $\p$ and $\tau(\z)$ have a common suffix $0003$.
Since this suffix is a factor of $\tau(\varphi(1_1))=\tau(\varphi(1_5))$, this requires extra consideration in Theorem~\ref{thm:s-pre-5-4-power-free} and several other results.

\begin{theorem}\label{thm:s-pre-5-4-power-free}
The word $\s$ is pre-$5/4$-power-free.
\end{theorem}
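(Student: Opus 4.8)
The plan is to exploit the self-similarity
\[
\s = \z\,\varphi(\s),
\]
which is immediate from $\s = \z\varphi(\z)\varphi^2(\z)\cdots$ since applying $\varphi$ shifts the expansion by one block. Equivalently, writing $t_k = \z\varphi(\z)\cdots\varphi^k(\z)$ for the prefixes of $\s$, we have $t_0 = \z$ and $t_{k+1} = \z\,\varphi(t_k)$. Every finite factor of $\s$ lies inside some $t_k$, so it suffices to prove by induction on $k$ that each $t_k$ is pre-$5/4$-power-free. Each $t_k$ is a prefix of $\s$, so by Parts~\ref{stuff about z} and \ref{stuff about s} of Lemma~\ref{lem:various-properties} it is subscript-increasing and avoids the alphabet $\Gamma$; this is exactly the hypothesis needed to feed $t_k$ into Proposition~\ref{pro:pre-5-4-power-phi}.

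For the base case I would verify by a finite computation that $\z$ is pre-$5/4$-power-free. Using Lemma~\ref{lem:equality} together with the fact that $\z$ is subscript-increasing, the condition ``$\z$ has a factor $xyx'$ with $\size{x}=\tfrac13\size{y}=\size{x'}$ and $\varphi(x)=\varphi(x')$'' reduces, via the letterwise criterion of Lemma~\ref{lem:pre-5-4-power}, to a finite check over the $O(\size{\z}^2)$ candidate factors, which is carried out in the accompanying notebook. For the inductive step, assume $t_k$ is pre-$5/4$-power-free. By the previous paragraph $t_k$ meets the hypotheses of Proposition~\ref{pro:pre-5-4-power-phi}, so $\varphi(t_k)$ is pre-$5/4$-power-free. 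Now decompose $t_{k+1} = \z\,\varphi(t_k)$: any pre-$5/4$-power in $t_{k+1}$ lies entirely in the prefix $\z$ (excluded by the base case), entirely in the suffix $\varphi(t_k)$ (excluded by Proposition~\ref{pro:pre-5-4-power-phi}), or it straddles the seam at position $\size{\z}$. Since this seam sits at a fixed position of $\s$ independent of $k$, it is enough to rule out, once and for all, pre-$5/4$-powers of $\s$ that cross position $\size{\z}$.

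The seam analysis is where I expect the real difficulty to lie. I would split the crossing powers $xyx'$ into two regimes according to $\ell = \size{x}$. For $\ell$ below an explicit bound, a crossing power occupies only positions in a fixed finite window around $\size{\z}$, so its nonexistence is a finite computation on a sufficiently long prefix (the notebook uses the length-$331040$ prefix of $\word_{5/4}$). For large $\ell$ the second copy $x'$ lies deep inside the image $\varphi(\s)$, so Lemma~\ref{lem:Phi-Locates-6-Length} and Lemma~\ref{lem:5-4-powers-length-dvi-by-6} force $\ell$ to be divisible by $6$ and the image-side occurrence to be block-aligned; de-substituting the image portion then shortens the crossing configuration until it falls into the finite window. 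The genuinely delicate point, and the main obstacle, is that the seed $\z$ is \emph{not} itself a $\varphi$-image, so a crossing power whose first copy $x$ overlaps $\z$ cannot be de-substituted on both sides, and one must compare $x$ (anchored in $\z$) directly against $x'$ (anchored in $\varphi(\s)$). This is precisely where the coincidence that $\p$ and $\tau(\z)$ share the suffix $0003$, which is also a factor of $\tau(\varphi(1_1)) = \tau(\varphi(1_5))$, matters: the tail of $\z$ resembles an internal factor of a genuine image, producing a near-repetition at the seam that must be checked by hand rather than dismissed structurally.
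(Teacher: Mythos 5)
Your skeleton is the same as the paper's: induction on the prefixes $t_e = \z\varphi(\z)\cdots\varphi^e(\z)$ using $t_{e+1}=\z\,\varphi(t_e)$, with Proposition~\ref{pro:pre-5-4-power-phi} (fed by Parts~\ref{stuff about z} and \ref{stuff about s} of Lemma~\ref{lem:various-properties}) disposing of the suffix $\varphi(t_e)$, a finite computation for the base case, and a separate analysis of factors crossing the seam at position $\size{\z}$. One small correction: the right base case is $\z\varphi(\z)$, not $\z$, because a crossing factor $xyx'$ whose first copy $x$ lies entirely in $\z$ has $\size{xyx'}=5\size{x}\le 5\size{\z}<\size{\z\varphi(\z)}$ and is therefore absorbed by that single computation; the paper notes explicitly that starting the induction at $e=0$ would not avoid the $\z\varphi(\z)$ check.

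The genuine gap is the crossing case in which the first copy $x$ itself straddles the last letter of $\z$ and the first letter of $\varphi(t_e)$, with $\ell=\size{x}$ unbounded. Your proposed mechanism for large $\ell$ --- de-substituting the image side until the configuration falls into a finite window --- cannot be run, for exactly the reason you flag: $x$ overlaps $\z$, which is not a $\varphi$-image, so only $x'$ has a preimage and the configuration never shrinks into a checkable window. Saying the resulting near-repetition ``must be checked by hand'' does not close this, since there are infinitely many such configurations as $\ell$ grows; a structural argument is required. The paper supplies it via Lemma~\ref{lem:pre-5-4-power}: if the subscript sequences of $x$ and $x'$ differ by $4$, the suffix $2_70_00_10_23_3$ of $\z$ (or the factor $3_31_4$ at the seam) together with Parts~\ref{stuff about s} and \ref{last-letter-phi} of Lemma~\ref{lem:various-properties} exhibits a forbidden pair of corresponding letters; if they line up, either a mismatch occurs at an odd-subscript position whose two occurrences lie in different columns of $\varphi$, or --- the hardest subcase --- $x$ and $x'$ agree except possibly in the sixth column, and one shows $x\neq x'$ by noting that $x=x'$ would force $x'$ to begin $0_00_10_23_3\,1_41_51_60_70_03_1\cdots$, whose only two subscript-increasing preimages start $1_10_20_33_4$ and $1_52_60_71_0$; since $3_4$ and $1_0$ occur only at finitely many explicit positions of $\z$ and not at all in $\varphi(\z)\cdots\varphi^e(\z)$, neither word is a factor of $t_e$, a contradiction. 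This preimage-uniqueness argument, valid for arbitrarily large $\ell$, is the missing idea in your proposal.
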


\begin{proof}
We show that, for all $e\ge 1$, $\z\varphi(\z) \cdots \varphi^e(\z)$ is pre-$5/4$-power-free.
Note that $\z\varphi(\z) \cdots \varphi^e(\z) \in(\Sigma_8\setminus\Gamma)^*$ for all $e\ge 1$.
Since $\s$ is subscript-increasing by Part~\ref{stuff about s} of Lemma~\ref{lem:various-properties}, its factor $\z\varphi(\z) \cdots \varphi^e(\z)$ is also subscript-increasing.
We proceed by induction on $e$.

For $e = 1$, one checks programmatically that $\z\varphi(\z)$ is pre-$5/4$-power-free.
Our implementation took about $6$ hours (although this could be reduced by parallelizing).

Now suppose that the word $\z\varphi(\z) \cdots \varphi^e(\z)$ is pre-$5/4$-power-free.
We show that $\z\varphi(\z) \cdots \varphi^e(\z) \varphi^{e+1}(\z)$ is also pre-$5/4$-power-free.
Since $\z\varphi(\z) \cdots \varphi^e(\z)$ is a word on $\Sigma_8\setminus\Gamma$, Proposition~\ref{pro:pre-5-4-power-phi} implies that $\varphi(\z) \varphi^2(\z) \cdots \varphi^{e + 1}(\z)$ is pre-$5/4$-power-free.
Also, $\z$ is pre-$5/4$-power-free since $\z\varphi(\z)$ is pre-$5/4$-power-free.
Therefore it suffices to check factors of $\z\varphi(\z) \cdots \varphi^{e+1}(\z)$ that overlap both $\z$ and $\varphi(\z) \cdots \varphi^{e+1}(\z)$.
Let $xyx'$ be such a factor, with $\size{x}=\frac{1}{3}\size{y}=\size{x'}$.

If $x$ is a factor of $\z$, then
\[
\size{xyx'} = 5 \size{x} \le 5 \size{\z} < 6 \size{\z} = \size{\varphi(\z)}.
\]
Therefore $xyx'$ is a factor of $\z \varphi(\z)$.
The base case of the induction implies that $x y x'$ is not a pre-$5/4$-power.
(Note that if we had used $e=0$ as the base case of the induction, we still would have needed to check the case $e=1$ programmatically here.)

If $x$ is not a factor of $\z$, then $x$ overlaps the last letter of $\z$ and the first letter of $\varphi(\z) \cdots \varphi^{e+1}(\z)$, since we assume $x y x'$ overlaps both $\z$ and $\varphi(\z) \cdots \varphi^{e+1}(\z)$.
There are two cases.

If $x$ overlaps the last $5$ letters of $\z$, then $x$ contains the suffix $2_70_00_10_23_3$ of $\z$.
If the subscripts in $x$ and $x'$ differ by $4$, then $x'$ being a factor of $\varphi(\z) \cdots \varphi^{e+1}(\z)$ implies that the factor $2_70_00_10_23_3$ of $x$ corresponds to a factor $n_31_4\bar{n}_51_6\bar{\bar{n}}_7$ of $x'$.
So $x y x'$ is not a pre-$5/4$-power by Lemma~\ref{lem:pre-5-4-power}.
If the subscripts in $x$ and $x'$ line up, then the factor $2_70_00_10_23_3$ of $x$ corresponds to a factor $n_7 0_0 \bar{n}_1 0_2 \bar{\bar{n}}_3$ of $x'$.
Since $2_70_00_10_23_3$ is not a factor of $\varphi(\z)\cdots \varphi^{e+1}(\z)$, we must have $n_7 \neq 2_7$ or $\bar{n}_1\neq 0_1$ or $\bar{\bar{n}}_3 \neq 3_3$.
Therefore $x y x'$ is not a pre-$5/4$-power by Lemma~\ref{lem:pre-5-4-power}.

Suppose $x$ overlaps fewer than the last $5$ letters of $\z$.
If $\size{x}$ is odd, then the subscripts in $x$ and $x'$ differ by $4$.
Since $x$ contains the factor $3_31_4$, then $x'$ being a factor of $\varphi(\z) \cdots \varphi^{e+1}(\z)$ implies that the factor $3_31_4$ of $x$ corresponds to a factor $n_70_0$ of $x'$, and $xyx'$ is not a pre-$5/4$-power by Lemma~\ref{lem:pre-5-4-power}.
If $\size{x}$ is even, then the subscripts in $x$ and $x'$ line up.
The words $x$ and $x'$ agree on even subscripts (by Part~\ref{stuff about s} of Lemma~\ref{lem:various-properties} because the length-$4$ suffix of $\z$ is $0_00_10_23_3$).
For odd subscripts, if the corresponding letters of $x$ and $x'$ belong to different columns (that is, their positions are not congruent modulo $6$), then, as in Case~2.1 in the proof of Proposition~\ref{pro:pre-5-4-power-phi}, they form one of the pairs
\[
(1_j,0_j), \, (1_j,(n+d)_j), \, (0_j,1_j), \, (0_j,(n+d)_j), \, ((n+d)_j,1_j), \, ((n+d)_j,0_j)
\]
with $j$ odd and $d\in\{1,2,3\}$.
Part~\ref{last-letter-phi} of Lemma~\ref{lem:various-properties} implies that $xyx'$ is not a pre-$5/4$-power by Lemma~\ref{lem:pre-5-4-power}.
If the corresponding letters belong to the same column, then $x$ and $x'$ agree everywhere except maybe in the sixth column.
Let $j$ be the initial position of $x'$ in $\varphi(\z) \varphi^2(\z) \cdots \varphi^{e+1}(\z)$.
Let $u'$ be the word of minimal length starting at position $\lfloor\frac{j}{6}\rfloor$ in $\z \varphi(\z) \cdots \varphi^e(\z)$ such that $x'$ is a factor of $\varphi(u')$.

If $\size{u'}\le 3$, then $\size{x}\le \size{\varphi(u')} \le 18$, so $\size{xyx'} = 5\size{x} \le 90 < \size{\z\varphi(\z)}$, which means that $xyx'$ is a factor of $\z\varphi(\z)$.
Due to the base case, we already know that $xyx'$ is not a pre-$5/4$-power.

Finally, consider the case $\size{u'}\ge 4$.
Toward a contradiction, suppose $x=x'$.
Since $x = x'$ is a prefix of
\[
	0_0 0_1 0_2 3_3 \varphi(\z) \varphi^2(\z) \cdots
	=
	0_0 0_1 0_2 3_3 \cdot
	1_4 1_5 1_6 0_7 0_0 3_1 \cdot
	0_2 1_3 1_4 0_5 1_6 2_7 \cdot
	0_0 1_1 0_2 0_3 1_4 4_5
	\cdots,
\]
the word $u'$ is one of the two preimages
\[
	u'=
	\begin{cases}
	1_1 0_2 0_3 3_4 \cdots \\
	1_5 2_6 0_7 1_0 \cdots.
	\end{cases}
\]
Recall that $u'$ is a factor of $\z \varphi(\z) \cdots \varphi^e(\z)$.
By Part~\ref{stuff about s} of Lemma~\ref{lem:various-properties}, the fourth letters $3_4$ and $1_0$ do not occur in $\varphi(\z) \cdots \varphi^e(\z)$, so they must occur in $\z$.
We consider the positions where they occur.
The letter $3_4$ occurs in $\z$ only in positions $2$ and $66$.
Position $2$ is too early for $1_1 0_2 0_3 3_4$ to be a factor.
At position $66$ we have $\z(63)\z(64)\z(65)\z(66) = 0_1 0_2 0_3 3_4$.
Similarly, the letter $1_0$ occurs in $\z$ only in positions $22$ and $54$ and $78$, and we find
\begin{align*}
	\z(19)\z(20)\z(21)\z(22)&=2_5 2_6 0_7 1_0 \\
	\z(51)\z(52)\z(53)\z(54)&=1_5 2_6 2_7 1_0 \\
	\z(75)\z(76)\z(77)\z(78)&=1_5 2_6 1_7 1_0.
\end{align*}
Therefore neither $1_1 0_2 0_3 3_4$ nor $1_5 2_6 0_7 1_0$ is a factor of $\z$.
It follows that $x \neq x'$.
Since the sequences of subscripts in $x$ and $x'$ are equal, this implies that $x y x'$ is not a pre-$5/4$-power by Lemma~\ref{lem:pre-5-4-power}.
\end{proof}

\section{$5/4$-power-freeness}\label{sec:5/4-power-freeness}

In this section we show that $\p \tau(\varphi(\s))$ is $5/4$-power-free.
As a consequence of Theorem~\ref{thm:s-pre-5-4-power-free}, we obtain the following.

\begin{proposition}\label{cor:phi-s-5-4-power-free}
The infinite word $\varphi(\s)$ is $5/4$-power-free.
\end{proposition}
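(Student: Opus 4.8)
The plan is to obtain this as an immediate consequence of the machinery already assembled, chaining Theorem~\ref{thm:s-pre-5-4-power-free}, Proposition~\ref{pro:pre-5-4-power-phi}, and Proposition~\ref{pro:pre-5-4-power-strong-cond}. The key conceptual point is that although $\varphi$ does not preserve $5/4$-power-freeness directly (as the example in Section~\ref{sec:pre-5/4-power-freeness} shows), it does preserve the stronger property of pre-$5/4$-power-freeness once we restrict to subscript-increasing words on $\Sigma_8\setminus\Gamma$; this is exactly what Proposition~\ref{pro:pre-5-4-power-phi} provides. Since $\varphi(\s)$ is itself an image under $\varphi$, establishing that it is \emph{pre}-$5/4$-power-free and then invoking the implication that pre-$5/4$-power-freeness entails $5/4$-power-freeness will finish the argument.

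Concretely, I would first check that $\s$ satisfies the hypotheses of Proposition~\ref{pro:pre-5-4-power-phi}. By Part~\ref{stuff about s} of Lemma~\ref{lem:various-properties}, the word $\s$ is subscript-increasing and is a word on $\Sigma_8\setminus\Gamma$. By Theorem~\ref{thm:s-pre-5-4-power-free}, $\s$ is pre-$5/4$-power-free. Applying Proposition~\ref{pro:pre-5-4-power-phi} with $w=\s$ then yields that $\varphi(\s)$ is pre-$5/4$-power-free. Although Proposition~\ref{pro:pre-5-4-power-phi} is phrased for a word $w$, nothing in its proof uses finiteness: it reasons about an arbitrary finite factor $xyx'$ of $\varphi(w)$, so it applies verbatim to the infinite word $\s$.

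Finally, I would convert pre-$5/4$-power-freeness into $5/4$-power-freeness. By Proposition~\ref{pro:pre-5-4-power-strong-cond}, every $5/4$-power on $\Sigma_8$ is a pre-$5/4$-power; contrapositively, a pre-$5/4$-power-free word contains no $5/4$-power. Hence $\varphi(\s)$ is $5/4$-power-free, as desired.

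I do not expect any genuine obstacle at this stage: all the difficulty has already been absorbed into Theorem~\ref{thm:s-pre-5-4-power-free} (in particular the programmatic verification of the base case $\z\varphi(\z)$ and the overlap analysis across the $\z$/$\varphi(\z)$ boundary) and into the case analysis of Proposition~\ref{pro:pre-5-4-power-phi}. The only points requiring a moment's care are the transition from finite to infinite words noted above and the observation that passing through pre-$5/4$-power-freeness is precisely what lets us sidestep the failure of $\varphi$ to be $5/4$-power-free.
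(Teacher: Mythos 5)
Your proof is correct and follows exactly the paper's argument: both chain Theorem~\ref{thm:s-pre-5-4-power-free}, Proposition~\ref{pro:pre-5-4-power-phi}, and Proposition~\ref{pro:pre-5-4-power-strong-cond} in the same order. Your extra remark that Proposition~\ref{pro:pre-5-4-power-phi} applies to infinite words because it only examines finite factors is a reasonable point of care that the paper leaves implicit.
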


\begin{proof}
By Theorem~\ref{thm:s-pre-5-4-power-free}, $\s$ is pre-$5/4$-power-free.
Since $\s$ is a word on $\Sigma_8\setminus\Gamma$, Proposition~\ref{pro:pre-5-4-power-phi} implies that $\varphi(\s)$ is also pre-$5/4$-power-free.
Proposition~\ref{pro:pre-5-4-power-strong-cond} implies that $\varphi(\s)$ is $5/4$-power-free.
\end{proof}

The next lemma shows that applying the coding $\tau$ to $\varphi(\s)$ preserves $5/4$-power-freeness.

\begin{lemma}\label{lem:tau-phi-s-5-4-power-free}
The infinite word $\tau(\varphi(\s))$ is $5/4$-power-free.
\end{lemma}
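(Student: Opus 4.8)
The plan is to prove the contrapositive at the level of the coding. Suppose for contradiction that $\tau(\varphi(\s))$ contains a $5/4$-power, say a factor $XYX$ with $\size{X} = \frac{1}{3}\size{Y}$, occurring at some position in $\tau(\varphi(\s))$. Because $\tau$ is a $1$-uniform morphism (a coding) applied letterwise, the factor $XYX$ of $\tau(\varphi(\s))$ is the image under $\tau$ of a factor $xyx'$ of $\varphi(\s)$ of the same length occurring at the same position, with $\size{x} = \frac{1}{3}\size{y} = \size{x'}$. I would then observe that $\varphi(\s)$ is subscript-increasing: indeed $\s$ is subscript-increasing by Part~\ref{stuff about s} of Lemma~\ref{lem:various-properties}, and $\varphi$ preserves the subscript-increasing property. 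Hence the two occurrences $x$ and $x'$ either have identical sequences of subscripts or sequences of subscripts differing by $4$, exactly as analyzed throughout Section~\ref{sec:pre-5/4-power-freeness}.

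The heart of the argument is to upgrade the equality $\tau(x) = \tau(x')$ (which is what $XYX$ being a genuine $5/4$-power gives us) to the stronger equality $\varphi$-preimage condition, so that $xyx'$ is a pre-$5/4$-power in the sense of Lemma~\ref{lem:pre-5-4-power}. Concretely, I would check that whenever corresponding letters $x(m)$ and $x'(m)$ satisfy $\tau(x(m)) = \tau(x'(m))$, the subscript constraints of Lemma~\ref{lem:pre-5-4-power} are automatically met. If the subscripts of $x$ and $x'$ are equal, then $\tau(x)=\tau(x')$ forces $x = x'$, which is Condition~\ref{equal condition}. If the subscripts differ by $4$, I would use Part~\ref{even-subscript} of Lemma~\ref{lem:various-properties} to control the even-subscript positions: corresponding even-subscript letters lie in one of the pairs from $\{(0_0,1_4),(0_2,1_6),(1_4,0_0),(1_6,0_2)\}$, whose $\tau$-images differ by $\pm 1$, contradicting $\tau(x(m))=\tau(x'(m))$ unless no such position occurs. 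This would show that the only way to have $\tau(x)=\tau(x')$ with subscripts differing by $4$ is precisely through the numeric offsets $\{-2,2\}$ on even subscripts and $\{0\}$ on odd subscripts permitted in Condition~\ref{differ by 4 condition}. Thus $xyx'$ is a pre-$5/4$-power.

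But $\varphi(\s)$ is pre-$5/4$-power-free: this follows from Theorem~\ref{thm:s-pre-5-4-power-free} together with Proposition~\ref{pro:pre-5-4-power-phi}, since $\s$ is pre-$5/4$-power-free on $\Sigma_8 \setminus \Gamma$ and hence $\varphi(\s)$ is pre-$5/4$-power-free (this is exactly the intermediate step already used in the proof of Proposition~\ref{cor:phi-s-5-4-power-free}). This contradicts the conclusion that $xyx'$ is a pre-$5/4$-power, and the contradiction establishes that $\tau(\varphi(\s))$ has no $5/4$-power.

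The step I expect to be the main obstacle is the bookkeeping in the ``subscripts differ by $4$'' case, namely verifying that a $\tau$-collision $\tau(x)=\tau(x')$ cannot occur in a way that is forbidden by Lemma~\ref{lem:pre-5-4-power}. The subtlety is that $\tau$ forgets subscripts, so a priori a $5/4$-power in $\tau(\varphi(\s))$ could arise from two $\varphi(\s)$-factors that agree numerically but whose subscript patterns are not among the pre-$5/4$-power configurations. The even-subscript control from Part~\ref{even-subscript} of Lemma~\ref{lem:various-properties} is what rules this out, because any even-subscript position in $x$ would be matched against an even-subscript letter in $x'$ whose value differs by exactly $1$, which is incompatible with a $\tau$-equality; forcing all matched positions to be odd-subscript (or value-offset by $\pm 2$ on even subscripts) is exactly the pre-$5/4$-power condition. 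Making this case analysis airtight, and confirming it covers both the odd and even $\size{x}$ regimes, is where the care is required.
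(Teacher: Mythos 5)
Your plan is correct, and it reaches the conclusion by a route that coincides with the paper's proof in its main cases but differs in one subcase. Both arguments pull the putative $5/4$-power back to a factor $uvu'$ of $\varphi(\s)$ and split on the parity of $\ell = \size{x}$: for $\ell$ even the subscripts line up and $\tau(u)=\tau(u')$ forces $u=u'$, and for $\ell$ odd with an even-subscript position present, Part~\ref{even-subscript} of Lemma~\ref{lem:various-properties} puts the corresponding letters in $\{(0_0,1_4),(0_2,1_6),(1_4,0_0),(1_6,0_2)\}$, so their $\tau$-images differ by $\pm 1$ and no $\tau$-collision is possible --- exactly as in the paper. Where you diverge is the residual case $\ell=1$ with odd subscripts: the paper disposes of it by an explicit finite check of the six pairs of columns of Table~\eqref{eqn:array-phi}, using the fact that the problematic letters ($0_4$, $-2_0$, $-1_1$, etc.) lie in $\Gamma$ and hence never occur in $\s$; you instead observe that a length-$5$ factor $x y x'$ with odd subscripts differing by $4$ and $\tau(x)=\tau(x')$ satisfies $\varphi(x)=\varphi(x')$ by Lemma~\ref{lem:equality}, hence is a pre-$5/4$-power, contradicting the pre-$5/4$-power-freeness of $\varphi(\s)$ (Theorem~\ref{thm:s-pre-5-4-power-free} plus Proposition~\ref{pro:pre-5-4-power-phi}). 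This is a clean and uniform substitute for the paper's hand check, at the cost of leaning on the full strength of Proposition~\ref{pro:pre-5-4-power-phi} rather than only on Proposition~\ref{cor:phi-s-5-4-power-free}. One sentence of your middle paragraph should be repaired before writing this up: a $\tau$-collision forces the offset $\tau(x(m))-\tau(x'(m))=0$ at every position, which is \emph{incompatible} with the offsets $\{-2,2\}$ that Condition~\ref{differ by 4 condition} demands at even subscripts; so in the subscripts-differ-by-$4$ case you do not show that $xyx'$ is a pre-$5/4$-power whenever an even-subscript position occurs --- rather you get an immediate contradiction with $\tau(x)=\tau(x')$ there, and the pre-$5/4$-power contradiction is reserved for the all-odd-subscript case $\ell=1$. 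Your surrounding text (``unless no such position occurs'') shows you have the right dichotomy in mind, so this is a matter of phrasing, not of substance.
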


\begin{proof}
Let $xyx'$ be a nonempty factor of $\tau(\varphi(\s))$ with $\size{x} = \frac{1}{3} \size{y} = \size{x'}$.
We show that $x \neq x'$.
Let $uvu'$ be the factor of $\varphi(\s)$ corresponding to $xyx'$ in $\tau(\varphi(\s))$, where $\tau(u)=x$, and $\tau(v)=y$, and $\tau(u')=x'$.
Let $\ell = \size{x}$.

First assume $\ell$ is even.
Assume toward a contradiction that $x = x'$.
Since $\size{u v} = 4\ell$ is a multiple of $8$, the sequences of subscripts in $u$ and $u'$ line up.
Then $\tau(u)=x=\tau(u')$ implies that $u=u'$, and $uvu$ is a $5/4$-power in $\varphi(\s)$.
This violates Proposition~\ref{cor:phi-s-5-4-power-free}.

Assume that $\ell$ is odd.
Then $4\ell \equiv 4 \mod{8}$, and the sequences of subscripts in $u$ and $u'$ differ by $4$.
For convenience, we give a table of the values of $\tau(\varphi(n_j))$:
\begin{equation}\label{eqn:array-phi}
	\begin{array}{clllllll}
		\tau(\varphi(n_0)) & = & 0 & 1 & 0 & 0 & 1 & (n + 3) \\
		\tau(\varphi(n_1)) & = & 1 & 1 & 0 & 0 & 0 & (n + 2) \\
		\tau(\varphi(n_2)) & = & 1 & 1 & 1 & 0 & 0 & (n + 3) \\
		\tau(\varphi(n_3)) & = & 0 & 1 & 1 & 0 & 1 & (n + 2) \\
		\tau(\varphi(n_4)) & = & 0 & 1 & 0 & 0 & 1 & (n + 1) \\
		\tau(\varphi(n_5)) & = & 1 & 1 & 0 & 0 & 0 & (n + 2) \\
		\tau(\varphi(n_6)) & = & 1 & 1 & 1 & 0 & 0 & (n + 1) \\
		\tau(\varphi(n_7)) & = & 0 & 1 & 1 & 0 & 1 & (n + 2).
	\end{array}
\end{equation}
Recall that $\s$ is a word on $\Sigma_8\setminus\Gamma$ by Part~\ref{stuff about s} of Lemma~\ref{lem:various-properties}.

If $\ell=1$, it is sufficient to check that the letter in position $i$ in $\tau(\varphi(\s))$ is different from the letter in position $i+4$ since Table~\eqref{eqn:array-phi} lists the letters in images of $\tau\circ\varphi$.
We do this by looking at six pairs of columns, each separated by $3$ columns, in Table~\eqref{eqn:array-phi}.
The first and fifth columns are unequal row by row.
For the second and sixth columns, several potential problems occur.
For instance, in the fifth row, $\tau(\varphi(0_4))=0 1 0 0 1 1$ contains the $5/4$-power $10011$, but fortunately $0_4$ never appears in $\s$.
Similarly, $-2_0, -1_1, -2_2, -1_3, -1_5, 0_6, -1_7$ create $5/4$-powers but never appear in $\s$.
For the third and first columns, we have to compare letters that are offset by $1$ row.
In particular, the last letter of the third column has to be compared with the first letter in the first column.
We do the same for the remaining three pairs of columns and find that there are no $5/4$-powers of length $5$ in $\tau(\varphi(\s))$.

Suppose that $\ell\ge 3$.
Each occurrence of $u$ in $\varphi(\s)$ contains a letter with an even subscript, that is, a letter that falls either in the first, third, or fifth column in Table~\eqref{eqn:array-phi}.
Since the subscripts in $u$ and $u'$ differ by $4$, the pair of corresponding letters in $u$ and $u'$ belongs to
\[
\{(0_0,1_4),(0_2,1_6),(1_4,0_0),(1_6,0_2)\}
\]
by Part~\ref{stuff about s} of Lemma~\ref{lem:various-properties}, as in the proof of Proposition~\ref{pro:pre-5-4-power-phi}.
This shows that $\tau(u)\neq \tau(u')$, which implies $x \neq x'$.
\end{proof}

\begin{remark}
Note that the previous argument works more generally to show that if $w$ is a subscript-increasing pre-$5/4$-power-free word on $\Sigma_8\setminus\Gamma$ then $\tau(\varphi(w))$ is $5/4$-power-free.
\end{remark}

The last step in showing that $\p\tau(\varphi(\s))$ is $5/4$-power-free is to prove that prepending $\p$ to $\tau(\varphi(\s))$ also yields a $5/4$-power-free word.
To that aim, we introduce the following notion.

\begin{definition}
Let $N$ be a set of integers, and let $\alpha, \beta \in \Z \cup \{n+1,n+2,n+3\}$, where $n$ is a symbol.
Then $\alpha$ and $\beta$ are \emph{possibly equal with respect to $N$} if there exist $m, m' \in N$ such that $\alpha\vert_{n=m} = \beta\vert_{n=m'}$.
\end{definition}

Two letters $\alpha,\beta$ are possibly equal with respect to $N$ if we can make them equal by substituting integers from $N$ for the symbol $n$.
In particular, for every nonempty set $N$, two integers $\alpha, \beta$ are possibly equal if and only if $\alpha = \beta$.
The definition of possibly equal letters extends to words on $\Z \cup \{n+1,n+2,n+3\}$ in the natural way.
The next two lemmas will be used to prove Theorem~\ref{thm:p-tau-phi-s-5-4-power-free}.

\begin{lemma}\label{lem:weird technical 2}
Let $n$ be a symbol, and let $N \supseteq \{-3, -2, \dots, 4\}$.
Let $\alpha, \beta$ be elements of $\{0, 1, \dots, 5\} \cup \{n + 1, n + 2, n + 3\}$.
If $\alpha$ and $\beta$ are possibly equal with respect to $N$, then they are possibly equal with respect to $\{-3, -2, \dots, 4\}$.
\end{lemma}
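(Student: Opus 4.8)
The plan is to argue by a short, exhaustive case analysis according to whether each of $\alpha$ and $\beta$ is an honest integer in $\{0,1,\dots,5\}$ or a symbolic expression $n+d$ with $d\in\{1,2,3\}$. Everything will reduce to two elementary arithmetic observations that I would record first. Observation one: if $\alpha\in\{0,1,\dots,5\}$ and $d\in\{1,2,3\}$, then $\alpha-d\in\{-3,-2,\dots,4\}$, the extreme values being $0-3=-3$ and $5-1=4$; equivalently, the difference set $\{0,\dots,5\}-\{1,2,3\}$ is exactly the target set $\{-3,\dots,4\}$. Observation two: the difference of two elements of $\{1,2,3\}$ always lies in $\{-2,-1,0,1,2\}\subseteq\{-3,\dots,4\}$.

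With these in hand, I would show that the cases in which at least one of $\alpha,\beta$ is symbolic hold \emph{unconditionally}, so the hypothesis is never invoked there. Suppose $\alpha\in\{0,\dots,5\}$ and $\beta=n+d$ (the reverse case being symmetric, since possible equality is symmetric in $\alpha,\beta$). Substituting $n=\alpha-d$ into $\beta$ makes $\beta$ equal to $\alpha$, and by Observation one $\alpha-d\in\{-3,\dots,4\}$; choosing any value in $\{-3,\dots,4\}$ for the (vacuous) substitution into $\alpha$ then witnesses that $\alpha$ and $\beta$ are possibly equal with respect to $\{-3,\dots,4\}$. If instead $\alpha=n+d_1$ and $\beta=n+d_2$ are both symbolic, take $m'=0$ and $m=d_2-d_1$, so that $\alpha|_{n=m}=m+d_1=d_2=\beta|_{n=m'}$; by Observation two both $m$ and $m'$ lie in $\{-3,\dots,4\}$, so again $\alpha$ and $\beta$ are possibly equal with respect to $\{-3,\dots,4\}$.

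The only remaining case is when both $\alpha$ and $\beta$ are integers in $\{0,\dots,5\}$, and this is the one place the hypothesis does real work. Here substituting for $n$ has no effect, so being possibly equal with respect to any nonempty set (in particular $N$, and in particular $\{-3,\dots,4\}$) is simply the equality $\alpha=\beta$; the hypothesis therefore yields $\alpha=\beta$, and since $\{-3,\dots,4\}$ is nonempty the conclusion follows immediately. There is no genuine obstacle here, but the point to watch is that the bounds in Observation one are \emph{tight} — $0-3=-3$ and $5-1=4$ — so the witness $\alpha-d$ never escapes $\{-3,\dots,4\}$; this tightness is precisely why the set $\{-3,\dots,4\}$ is the one appearing in the statement, and it explains why the seemingly restrictive hypothesis $N\supseteq\{-3,\dots,4\}$ suffices (indeed, the symbolic cases hold for any $N$, and the inclusion only serves to make the integer case's notion of possible equality coincide across $N$ and $\{-3,\dots,4\}$).
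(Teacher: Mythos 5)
Your proposal is correct and follows essentially the same three-way case analysis as the paper (both integers, one integer and one symbolic, both symbolic), with the same arithmetic witnesses $\alpha-d\in\{0,\dots,5\}-\{1,2,3\}=\{-3,\dots,4\}$ and $d'-d\in\{-2,\dots,2\}$. The only cosmetic difference is that you note explicitly that the two symbolic cases hold unconditionally, whereas the paper phrases them as consequences of the hypothesis; the underlying argument is identical.
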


\begin{proof}
Suppose $\alpha$ and $\beta$ are possibly equal with respect to $N$.
There are three cases to consider depending on the nature of the letters $\alpha$ and $\beta$.

If both letters are integers, then $\alpha = \beta$.
It follows that $\alpha$ and $\beta$ are possibly equal with respect to $\{-3, -2, \dots, 4\}$.

If one letter is an integer and the other is symbolic, without loss of generality let $\alpha \in \{0, 1, \dots, 5\}$ and $\beta = n+d$ for some $d\in \{1,2,3\}$.
By assumption, $\alpha=n+d$ for some $n\in N$, namely $n = \alpha - d \in \{0, 1, \dots, 5\} - \{1, 2, 3\} = \{-3, -2, \dots, 4\}$.
Therefore $\alpha$ and $\beta$ are possibly equal with respect to $\{-3, -2, \dots, 4\}$.

If both letters are symbolic, write $\alpha = n+d$ and $\beta = n+d'$ with $d,d'\in \{1,2,3\}$.
Without loss of generality, $d \leq d'$.
Let $m, m' \in N$ such that $m + d = m' + d'$.
Then $m - m' = d' - d \in \{0, 1, 2\}$.
Then $\alpha$ and $\beta$ are possibly equal with respect to $\{-3, -2, \dots, 4\}$, since $(m - m') + d = 0 + d'$ and $m - m', 0 \in \{-3, -2, \dots, 4\}$.
\end{proof}

\begin{lemma}\label{lem:weird technical}
Let $n$ be a symbol, and let $N = \{-3, -2, \dots, 4\}$.
For all $\alpha \in\Z \cup \{n+1,n+2,n+3\}$, define the set
\[
X_\alpha
=
\begin{cases}
\{\alpha\} & \text{if $\alpha \in \Z$} \\
N+d & \text{if $\alpha \in \{n+1,n+2,n+3\}$}.
\end{cases}
\]
If $\alpha,\beta \in\{0, 1, \dots, 5\} \cup \{n+1,n+2,n+3\}$ are possibly equal with respect to $N$, then $X_\alpha \cap X_\beta$ is nonempty.
\end{lemma}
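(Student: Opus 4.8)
The plan is to recognize that $X_\alpha$ is exactly the set of values $\alpha$ can take as $n$ ranges over $N$; once this is made explicit, the lemma becomes nothing more than the definition of ``possibly equal'' rephrased in terms of these sets.

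First I would record the identity
\[
X_\alpha = \{ \alpha\vert_{n=m} : m \in N \}
\]
and verify it against the two cases defining $X_\alpha$. If $\alpha \in \Z$, then substituting $n = m$ leaves $\alpha$ fixed, so the right-hand side collapses to $\{\alpha\} = X_\alpha$. If $\alpha = n+d$ with $d \in \{1,2,3\}$, then $\alpha\vert_{n=m} = m+d$, so as $m$ ranges over $N$ the right-hand side is $N+d = X_\alpha$. Thus $X_\alpha$, and likewise $X_\beta$, is precisely the value set of the letter under substitutions drawn from $N$.

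Next I would unfold the hypothesis: since $\alpha$ and $\beta$ are possibly equal with respect to $N$, there are $m, m' \in N$ with $\alpha\vert_{n=m} = \beta\vert_{n=m'}$. Calling this common value $c$, the identity above immediately gives $c = \alpha\vert_{n=m} \in X_\alpha$ and $c = \beta\vert_{n=m'} \in X_\beta$, so $c \in X_\alpha \cap X_\beta$ and the intersection is nonempty.

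There is no real obstacle here; the only point worth flagging is a sanity check rather than a difficulty. The argument never invokes the restriction $\alpha,\beta \in \{0,1,\dots,5\} \cup \{n+1,n+2,n+3\}$ and would run verbatim for any $\alpha,\beta$ in the full domain $\Z \cup \{n+1,n+2,n+3\}$ on which $X_\alpha$ is defined. The content of the lemma is therefore to repackage the abstract relation ``possibly equal'' as the nonemptiness of an explicit finite set, so that the condition can be tested directly by forming $X_\alpha$ and $X_\beta$ and intersecting them.
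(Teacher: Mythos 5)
Your proof is correct. The key observation is the same one the paper opens its proof with, namely that $X_\alpha$ is exactly the set of integer values that $\alpha$ can take under substitutions $n = m$ with $m \in N$; but where the paper then splits into three cases according to whether $\alpha$ and $\beta$ are integers or symbolic, you make the identity $X_\alpha = \{\alpha\vert_{n=m} : m \in N\}$ explicit and conclude uniformly: the common value $c = \alpha\vert_{n=m} = \beta\vert_{n=m'}$ witnessed by the hypothesis lies in both sets. This buys you two things the paper's argument does not have. First, your argument never uses the specific set $N = \{-3,\dots,4\}$, whereas the paper's both-symbolic case relies on the fact that $0 \in (N+d) \cap (N+d')$, which requires $\{-3,-2,-1\} \subseteq N$ (and, as written there, does not even invoke the possibly-equal hypothesis). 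Second, as you note, the restriction $\alpha,\beta \in \{0,1,\dots,5\} \cup \{n+1,n+2,n+3\}$ is inert; the statement holds on the full domain $\Z \cup \{n+1,n+2,n+3\}$. The paper's case analysis, for its part, yields slightly more information in the mixed case (it identifies the intersection as exactly $\{\alpha\}$), but that extra precision is not used downstream in Theorem~\ref{thm:p-tau-phi-s-5-4-power-free}.
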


\begin{proof}
The set $X_\alpha$ is the set of integer letters $c$ such that the letter $\alpha$ is possibly equal to $c$ with respect to $N$.
Suppose the letters $\alpha,\beta \in\{0, 1, \dots, 5\} \cup \{n+1,n+2,n+3\}$ are possibly equal with respect to $N$.
There are three cases to consider depending on the nature of these letters.

If both letters are integers, then $\alpha=\beta$, so $\alpha \in X_\alpha \cap X_\beta$.

If one letter is an integer and the other is symbolic, without loss of generality, let $\alpha\in\{0, 1, \dots, 5\}$ and $\beta=n+d$ with $d\in \{1,2,3\}$.
By assumption, $\alpha=n+d$ for some $n\in N$.
So $X_\alpha \cap X_\beta = \{\alpha\} \cap (N+d) = \{\alpha\}$.

If both letters are symbolic, let $\alpha=n+d$ and $\beta=n+d'$ with $d,d'\in \{1,2,3\}$.
Then $X_\alpha = N +d$ and $X_\beta = N + d'$, so $0 \in X_\alpha \cap X_\alpha$.
\end{proof}

\begin{theorem}\label{thm:p-tau-phi-s-5-4-power-free}
The infinite word $\p\tau(\varphi(\s))$ is $5/4$-power-free.
\end{theorem}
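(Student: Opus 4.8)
The plan is to localize any potential $5/4$-power and reduce the one genuinely new case to a finite, partly symbolic, computation. First I would record the two facts already in hand: the prefix $\p$ is $5/4$-power-free because it is by definition a prefix of $\word_{5/4}$, which avoids $5/4$-powers; and $\tau(\varphi(\s))$ is $5/4$-power-free by Lemma~\ref{lem:tau-phi-s-5-4-power-free}. Hence any $5/4$-power $xyx$ in $\p\,\tau(\varphi(\s))$ must straddle the boundary at position $6764$, meaning its first occurrence of $x$ begins at some position $p < 6764$ while the whole power ends at a position $\ge 6764$. So the entire content of the theorem is the analysis of these straddling powers.

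I would then split the straddling case according to whether the first occurrence of $x$ lies inside $\p$. If the first $x$ is a factor of $\p$, then $\size{x} \le 6764$, so the power has length at most $5 \cdot 6764$ and is contained in a fixed finite prefix of $\p\,\tau(\varphi(\s))$; this subcase is dispatched by direct verification on the explicit length-$331040$ prefix. If instead the first $x$ crosses the boundary, then the second occurrence of $x$ lies entirely in $\tau(\varphi(\s))$, and equating the part of the first $x$ that sits in $\p$ with the corresponding part of the second $x$ forces a suffix of $\p$ to reappear as a factor of $\tau(\varphi(\s))$. Here I would invoke the common-suffix property (Part~\ref{common-suffix-p-tau-z} of Lemma~\ref{lem:various-properties}): when that suffix has length at most $844$, it is simultaneously a suffix of $\tau(\z)$, which aligns the configuration with the self-similar structure of $\s = \z\,\varphi(\z)\,\varphi^2(\z)\cdots$ and lets the symbolic comparison below proceed against genuine windows of $\tau\circ\varphi$.

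The role of Lemmas~\ref{lem:weird technical 2} and \ref{lem:weird technical} is to make the comparison of $\p$ against windows of $\tau(\varphi(\s))$ effective, despite the sixth column of $\tau\circ\varphi$ (see Table~\eqref{eqn:array-phi}) containing entries $n+d$ whose integer part grows without bound. I would treat each such window symbolically over $\{0,1\}\cup\{n+1,n+2,n+3\}$ and ask whether it is \emph{possibly equal} to the relevant factor of $\p$: Lemma~\ref{lem:weird technical 2} shows it suffices to let $n$ range over the finite set $\{-3,\dots,4\}$, while Lemma~\ref{lem:weird technical} recasts possible equality as nonemptiness of $X_\alpha \cap X_\beta$, which is checkable letter by letter. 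Verifying that no such symbolic match survives then excludes the long straddling powers. I expect the main obstacle to be exactly this last subcase: one must argue that the length bound from the first-$x$-in-$\p$ reduction, the common-suffix realignment, and the finitely many symbolic comparisons captured by the possibly-equal relation together account for \emph{all} straddling $5/4$-powers, so that the computation on the length-$331040$ prefix is conclusive rather than merely indicative, and in particular that the growth of the sixth column really does obstruct every long repeated window.
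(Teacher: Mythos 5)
Your overall framing is right---$\p$ and $\tau(\varphi(\s))$ are each $5/4$-power-free, so only powers straddling the boundary matter---and you correctly identify what Lemmas~\ref{lem:weird technical 2} and \ref{lem:weird technical} are for. But there is a genuine gap exactly where you suspect one: your Case~B (first $x$ crosses the boundary) is never reduced to a finite computation. There $x = sv$ with $s$ a suffix of $\p$ and $v$ a prefix of $\tau(\varphi(\s))$, and $\size{x}$ is unbounded; the second occurrence of $x$ sits at position $p + 4\size{x} - \size{\p}$ in $\tau(\varphi(\s))$, which varies without bound as $\size{x}$ grows. Comparing ``the relevant factor of $\p$'' against symbolic windows does not settle this, because $s$ may be a single letter (which certainly does recur), and the real content of the comparison is between the arbitrarily long word $sv$ and an arbitrarily positioned factor of $\tau(\varphi(\s))$. ``Verifying that no symbolic match survives'' is not a finite procedure as you have described it. The appeal to the length-$844$ common suffix of $\p$ and $\tau(\z)$ is an ingredient the paper uses in the lexicographic-leastness argument (Theorem~\ref{thm:5over4-lex-least}, Case~2.2), not here, and you do not say what happens when more than $844$ letters of $\p$ are involved.

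The missing idea is a uniform factor-uniqueness statement: the paper proves that \emph{every length-$952$ factor of $\p\tau(\varphi(\s))$ starting in $\p$ occurs only once in $\p\tau(\varphi(\s))$}, which immediately kills all straddling powers with $\size{x} \ge 952$ (the two occurrences of $x$ would exhibit a repeated length-$952$ factor, the first starting in $\p$), leaving only a finite check for $\size{x} < 952$. The mechanism is an iterative refinement of classes of ``possibly equal'' positions, run on the finite set $\{0,\dots,\size{\p}-1\}$ of concrete positions together with $24$ symbolic positions representing residues modulo $48$ in $\tau(\varphi(\s))$; the procedure terminates at length $952$. Two such position sets $S_1$ and $S_2$ are needed because the first five columns have period length at most $4$ while the increment sequence $d$ has period length $8$, so positions $i$ and $i+24$ carry possibly equal factors and a single set of representatives does not separate them. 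Your proposal contains neither the uniqueness claim nor the terminating procedure that establishes it, so the reduction to finitely many checks---which is the entire substance of the theorem once the two halves are known to be individually $5/4$-power-free---is not achieved.
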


\begin{proof}
Since $\p$ is the prefix of $\word_{5/4}$ of length $6764$, $\p$ is $5/4$-power-free.
By Lemma~\ref{lem:tau-phi-s-5-4-power-free}, $\tau(\varphi(\s))$ is also $5/4$-power-free.
So if $\p\tau(\varphi(\s))$ contains a $5/4$-power, then it must overlap $\p$ and $\tau(\varphi(\s))$.
We will show that there are no $5/4$-powers $xyx$ starting in $\p$.

For factors $xyx$ with $\size{x} < 952$ starting in $\p$, note that $\lvert x \rvert < 952$ implies $\lvert xyx \rvert < 5 \cdot 952$, so it is enough to look for $5/4$-powers in $\p\tau(\varphi(\z))$ --- as opposed to $\p\tau(\varphi(\s))$ --- starting in $\p$.
We check programmatically that there is no such $5/4$-power $xyx$.
The computation took about a minute.

For longer factors, we show that each length-$952$ factor $x$ starting in $\p$ only occurs once in $\p\tau(\varphi(\s))$.
This will imply that there is no $5/4$-power $xyx$ in $\p\tau(\varphi(\s))$ starting in $\p$ such that $\lvert x \rvert \ge 952$.
Since $\s=0_2 0_3 3_4 \cdots$, the word $\p\tau(\varphi(\s))$ is of the form
\[
\p \tau \Big( \varphi(n_2) \varphi(n_3) \varphi(n_4) \varphi(n_5) \varphi(n_6) \varphi(n_7) \varphi(n_0) \varphi(n_1) \cdots \Big).
\]
Here we abuse notation; namely, the $n$'s are not necessarily equal.
Observe that $\size{\varphi(n_2) \varphi(n_3) \cdots \varphi(n_0) \varphi(n_1)}=48$.
We use a method to distinguish factors based on~\cite[Section 6]{Pudwell--Rowland}, where one would consider the set $\{0,1,\dots, \size{\p } + 48- 1\}$ of initial positions.
However, the morphism $\varphi$ makes things more complicated.
We need to run the procedure in the following paragraph for two different sets of positions instead of one.
Indeed, although the sequence $3, 2, 1, 2, 1, 2, 3, 2, \dots$ of increments $d$ has period length $8$, each of the first five columns has period length at most $4$.
This implies that the factors starting at positions $i$ and $i+24$ are possibly equal for all $i$ sufficiently large, so one set of positions does not suffice.
The two sets are
\[
S_1 =\{0,1,\dots, \size{\p } - 1\} \cup \{\size{\p }, \size{\p }+1, \dots, \size{\p } + 23\}
\]
and
\[
S_2 =\{0,1,\dots, \size{\p } - 1\} \cup \{\size{\p } + 24, \size{\p } + 25,\dots, \size{\p } + 47\}.
\]
The positions in $\{0,1,\dots, \size{\p } - 1\}$ represent factors starting in the prefix $\p$ of $\p\tau(\varphi(\s))$, while the other positions are representatives of general positions modulo $48$ in the suffix $\tau(\varphi(\s))$.
We also need to specify a set $N$ of integers that, roughly speaking, represent the possible values that each symbolic $n$ can take.

Let $S$ be a set of positions, and let $N$ be a set of integers.
As in Lemma~\ref{lem:weird technical}, the set $N$ represents values of $n$ such that the last letter of $\tau(\varphi(n_j))$, namely $n + d$ for some $d \in \{1, 2, 3\}$, is a letter in $\p\tau(\varphi(\s))$.
We maintain classes of positions corresponding to possibly equal factors starting at those positions.
Start with $\ell=0$, for which all length-$0$ factors are equal.
Then all positions belong to the same class $S$.
At each step, we increase $\ell$ by $1$, and for each position $i$ we consider the factor of length $\ell$ starting at position $i$, extended from the previous step by one letter to the right.
We break each class into new classes according to the last letter of each extended factor, as described in the following paragraph.
We stop once each class contains exactly one position, because then each factor occurs at most once.
Note that this procedure does not necessarily terminate, depending on the inputs.
If it terminates, then we record $\ell$.

For each class $\mathcal{I}$, we build subclasses $\mathcal{I}_c$ indexed by integers $c$.
For each position $i \in \mathcal{I}$, we consider the extended factor of length $\ell$ starting at position $i$.
If $0\le i\le \size{\p}-1$, then the new letter is in $\Z$ because $i + \ell - 1$ represents a particular position in $\p\tau(\varphi(\s))$.
If $i\ge \size{\p}$, then the new letter is either in $\Z$ or symbolic in $n$ because $i + \ell - 1$ represents all sufficiently large positions congruent to $i + \ell - 1$ modulo $48$.
Now there are two cases.
If the new letter is an integer $c$, we add the position $i$ to the class $\mathcal{I}_c$.
If the new letter is $n+d$ where $d\in\{1,2,3\}$, then we add the position $i$ to the class $\mathcal{I}_{n' + d}$ for each $n' \in N$.
We do this for all classes $\mathcal{I}$ and we use the union
\[
\bigcup_{\mathcal{I}} \, \{ \mathcal{I}_c : c \in \Z \}
\]
as our new set of classes for the next length $\ell + 1$.

For the sets $S_1$ and $S_2$, we initially use $N = \{0, 1, 2, 3, 4\}$.
For both sets, this procedure terminates and gives $\ell = 952$.
Our implementation took about $10$ seconds each.
The prefix of $\p\tau(\varphi(\s))$ of length $\size{\p} + 952-1$ is a word on the alphabet $\{0, 1, \dots, 5\}$.
Therefore, since $d \in \{1, 2, 3\}$, at most the eight classes $\mathcal{I}_0, \mathcal{I}_1, \dots, \mathcal{I}_7$ arise in each step of the procedure, since $\{0, 1, \dots, 5\} \cup (N + \{1, 2, 3\}) = \{0, 1, \dots, 7\}$.

It remains to show that using the set $N = \{0, 1, 2, 3, 4\}$ is sufficient to guarantee that, since the procedures terminated, each length-$952$ factor $x$ starting in $\p$ only occurs once in $\p\tau(\varphi(\s))$.
Since $\p\tau(\varphi(\s))$ is a word on the alphabet $\N$, it suffices to choose a subset $N$ of $\N - \{1, 2, 3\} = \{-3, -2, \dots\}$.
There exist letters in $\p\tau(\varphi(\s))$ that arise as the last letter of $\tau(\varphi(n_j))$ for arbitrarily large $n$, but the procedure cannot use an infinite set $N$.
We use Lemmas~\ref{lem:weird technical 2} and \ref{lem:weird technical} to show that $N$ need not contain any integer greater than $4$.

The procedure examines factors of a prefix of $\p\tau(\varphi(\s))$ and
\[
	\tau \Big( \varphi(n_2) \varphi(n_3) \varphi(n_4) \varphi(n_5) \varphi(n_6) \varphi(n_7) \varphi(n_0) \varphi(n_1) \cdots \Big).
\]
The factors of $\p\tau(\varphi(\s))$ lie in a prefix of length at most $\size{\p} + 952 - 1$.
The other word $\tau ( \varphi(n_2) \varphi(n_3) \varphi(n_4) \cdots )$ is on $\{0, 1\} \cup \{n + 1, n + 2, n + 3\}$.
Both words are on the alphabet $\{0, 1, \dots, 5\} \cup \{n + 1, n + 2, n + 3\}$, so we will be able to apply Lemma~\ref{lem:weird technical}.
Let $N \supseteq \{0, 1, \dots, 5\} - \{1, 2, 3\} = \{-3, -2, \dots, 4\}$.
On the step corresponding to length $\ell$ in the procedure, suppose the length-$\ell$ factors starting at positions $i$ and $j$ are possibly equal with respect to $N$.
We will show that there is a class $\mathcal{I}_c$ containing $i$ and $j$.
By Lemma~\ref{lem:weird technical 2}, the two factors are possibly equal with respect to $\{-3, -2, \dots, 4\}$.
In particular, the last two letters, which have positions $i + \ell - 1$ and $j + \ell - 1$, are possibly equal with respect to $\{-3, -2, \dots, 4\}$.
Let $\alpha$ and $\beta$ be these two letters.
Recall that $\alpha, \beta\in\{0, 1, \dots, 5\} \cup \{n + 1, n + 2, n + 3\}$.
By Lemma~\ref{lem:weird technical}, there exists $c \in X_{\alpha} \cap X_{\beta}$.
Therefore the letters $\alpha$ and $\beta$ are both possibly equal to $c$ with respect to $\{-3, -2, \dots, 4\}$, since $X_\alpha$ is the set of integers possibly equal to $\alpha$ with respect to $\{-3, -2, \dots, 4\}$.
So $i$ and $j$ are both added to $\mathcal I_c$.
We have shown that $N \subseteq \{-3, -2, \dots, 4\}$ suffices.

Next we remove $-3$ and $-2$.
We continue to consider letters in $\p\tau(\varphi(\s))$ that arise as the last letter of $\tau(\varphi(n_j))$ for $n\in N$.
Since $\tau(\s)$ does not contain the letter $-3$, this implies $n + 3$ and $0$ are not possibly equal with respect to the alphabet of $\tau(\s)$, so $N$ need not contain $-3$.
Similarly, $\tau(\s)$ does not contain the letter $-2$; therefore $n + 2$ and $0$ are not possibly equal, and $n + 3$ and $1$ are not possibly equal, so $N$ need not contain $-2$.
Therefore $N \subseteq \{-1, 0, 1, 2, 3, 4\}$ suffices.

To remove $-1$, we run the procedure on the sets $S_1$ and $S_2$ again.
However, this time we use the set $\{-1, 0, 1, 2, 3, 4\}$ and we artificially stop the procedure at $\ell = 952$.

For the set $S_1$, stopping at $\ell = 952$ yields $4$ nonempty classes of positions remaining, namely
\[
	\{\{6760, 6784\}, \{6761, 6785\}, \{6762, 6786\}, \{6763, 6787\}\}.
\]
The smallest position in each class is one of the last $4$ positions in $\p$.
As length-$952$ factors of $\p \tau ( \varphi(n_2) \varphi(n_3) \cdots)$, each pair of factors starting at those positions are possibly equal with respect to $N$.
For instance, consider the two factors
\begin{align*}
	& 0 0 0 \phantom{(n+{}}3\phantom{)} 1 1 1 0 0 \phantom{(n+{}}3\phantom{)} 0 1 1 0 1 \phantom{(n+{}}2\phantom{)} 0 1 0 0 1 \phantom{(n+{}}4\phantom{)} 1 1 0 0 0 \phantom{(n+{}}2\phantom{)} 1 1 1 0 0\phantom{(n+{}}2\phantom{)} \cdots, \\
	& 0 0 0 (n+2) 1 1 1 0 0 (n+1) 0 1 1 0 1 (n+2) 0 1 0 0 1 (n+3) 1 1 0 0 0 (n+2) 1 1 1 0 0 (n+3) \cdots
\end{align*}
starting at positions $6760, 6784$.
The first factor is a prefix of $0003\tau(\varphi(\s))$ and the second is a prefix of $0 0 0 (n+2)\tau(\varphi(n_6)\varphi(n_7)\cdots)$, which occurs every $48$ positions in the periodic word $\tau ( \varphi(n_2) \varphi(n_3) \cdots)$.
For these two factors to be equal, the pair of letters $2$ and $n+3$ have to be equal, and solving $2 = n + 3$ gives $n=-1$.
Similarly, the other three pairs of factors are only equal if the same pair of letters are equal, which again gives $n = -1$.
But letters $-1$ only appear in $\s$ in its prefix $\z$ and only with subscripts $0$ and $2$ and only in the nine positions $6, 14, 16, 32, 40, 48, 56, 70, 80$.
A finite check shows that the factors in each pair are different.

For $S_2$, there are also $4$ nonempty classes of positions remaining:
\[
	\{\{6760, 6808\}, \{6761, 6809\}, \{6762, 6810\}, \{6763, 6811\}\}.
\]
To show that the factors in each pair are different, we use slightly longer prefixes of $003\tau(\varphi(\s))$ and $0 0 0 (n+2)\tau(\varphi(n_2)\varphi(n_3)\cdots)$ than we used for $S_1$, and we again find a pair of letters $2$ and $n+3$.
This again implies $n=-1$ for each class.

Therefore we can remove $-1$ from $N$.
So $N = \{0, 1, 2, 3, 4\}$ suffices.
\end{proof}

\section{Lexicographic-leastness}\label{sec:lexicographic-least}

In this section we show that $\p\tau(\varphi(\s))$ is lexicographically least by showing the following.

\begin{theorem}\label{thm:5over4-lex-least}
Decreasing any nonzero letter of $\p\tau(\varphi(\s))$ introduces a $5/4$-power ending at that position.
\end{theorem}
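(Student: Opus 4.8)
The plan is to verify the greedy characterization of the lexicographically least word directly. Writing $w = \p\,\tau(\varphi(\s))$, for every position $p$ at which $w(p) = c > 0$ and every value $v$ with $0 \le v < c$, I must exhibit a length $\ell \ge 1$ for which the factor of length $5\ell$ ending at $p$ becomes a $5/4$-power once the letter at position $p$ is lowered to $v$. Concretely, such a \emph{certificate} is an $\ell$ satisfying $w(p - 4\ell) = v$ together with $w(j) = w(j + 4\ell)$ for all $p - 5\ell + 1 \le j \le p - 4\ell - 1$: the first condition forces the lowered final letter to agree with its counterpart $4\ell$ positions earlier, and the second makes the two length-$\ell$ blocks coincide except in that last letter. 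Since $w$ is already $5/4$-power-free by Theorem~\ref{thm:p-tau-phi-s-5-4-power-free}, no certificate exists for $v = c$, so producing one for each $v < c$ is exactly what is required; together with Theorem~\ref{thm:p-tau-phi-s-5-4-power-free} this establishes Theorem~\ref{thm:main}.

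Second, I would exploit the fact that five of the six columns are eventually periodic, so outside the growing column the letters are bounded (in fact $0$ or $1$ past the transient). For positions lying in the prefix $\p$ or in one of the five periodic columns, both the set of values $v < w(p)$ and the certificate lengths $\ell$ needed are bounded, so these positions can be disposed of by a single finite computation on the explicit length-$331040$ prefix: one checks that lowering the letter to each smaller value completes a short $5/4$-power. The only subtlety is to bound $\ell$ a priori, which follows because a long $5/4$-power ending at such a position would, by the length restrictions of Section~\ref{sec:length} combined with the $5/4$-power-freeness already proved, be forced to coincide with a structural certificate covered by the next step.

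Third, and this is the heart of the argument, I would treat the growing (sixth) column by induction on $p$, using the self-similarity $\s = \z\,\varphi(\s)$ together with Corollary~\ref{cor:5over4}, namely $w(6i + 123061) = w(i + 5920) + \delta(i)$ with $\delta(i) \in \{1,2,3\}$ periodic of period $8$. A far-out sixth-column letter has the form $w(p) = w(q) + d$ with $q = i + 5920 < p$ and $d = \delta(i)$. To lower $w(p)$ to $v$, split on the size of $v$: if $v \ge d$, the inductive hypothesis supplies a certificate $XYX$ of some length $5L$ witnessing that $w(q)$ can be lowered to $v - d$; applying $\varphi$ multiplies all lengths by $6$ and sends $XYX$ to the genuine $5/4$-power $\varphi(X)\varphi(Y)\varphi(X)$, which, after tracking the $\tau$-images and the aligned increments, yields a certificate of length $30L$ ending at $p$ once $w(p)$ is set to $v$. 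The finitely many small values $v < d$, together with the letters not yet in the self-similar regime, are cleared by the same finite prefix computation as in the previous step.

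The main obstacle is making the lift of the third step rigorous in the presence of the \emph{double} transient. Because the self-similar copy of $w$ reappears only from position $5920$ onward rather than from the beginning, I must check that every certificate position $q - 5L + 1, \dots, q$ produced by the induction actually lies in the range where the recurrence and the column-periodicity hold, and that the period-$8$ increment $\delta$ assigns the \emph{same} shift $d$ to both occurrences of $X$ in the lifted power; this alignment is precisely why eight letter-types are needed, and it can break near the boundaries. The shared length-$844$ suffix of $\p$ and $\tau(\z)$ from Part~\ref{common-suffix-p-tau-z} of Lemma~\ref{lem:various-properties}, which ends in $0003$, forces extra boundary cases where a certificate straddles $\p$ and $\tau(\varphi(\s))$; these, and the base of the induction, are exactly the places where the explicit computations on the length-$331040$ prefix are unavoidable.
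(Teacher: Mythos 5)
Your overall architecture matches the paper's: a finite computation for an initial segment, explicit short powers for the five eventually periodic columns, and an induction on position for the sixth column in which a $5/4$-power certificate in the preimage is lifted through $\varphi$ (multiplying lengths by $6$), with the $0003$ boundary between $\p$ and $\tau(\z)$ and the alignment of the period-$8$ increments singled out as the delicate points. That is exactly how the paper proceeds, and your identification of the obstacles is accurate.

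There is, however, one step that would fail as written. You propose to dispose of all positions ``lying in the prefix $\p$ or in one of the five periodic columns,'' and likewise the sixth-column cases $v < d$, ``by a single finite computation on the explicit length-$331040$ prefix.'' A prefix computation certifies nothing about the infinitely many positions beyond that prefix, and the five periodic columns and the sixth-column targets $v \in \{0,1\}$ (and $v=2$ when $d=3$, where the preimage letter would have to drop to $-1$ and the induction hypothesis does not apply) occur at infinitely many positions. What actually handles them is not a prefix check but the eventual periodicity of the background together with the explicit form of $\tau\circ\varphi$: the paper writes down, once per residue class, a length-$5$ power (lowering a $1$ in a periodic column, or lowering $n+d$ to $1$ against the all-ones second column), a length-$10$ power (lowering $n+d$ to $0$), and a length-$30$ power $\tau(\varphi(1_4\bar{n}_5\bar{\bar{n}}_6\bar{\bar{\bar{n}}}_7))\,010012 = 010012\,\tau(\varphi(\bar{n}_5\bar{\bar{n}}_6\bar{\bar{\bar{n}}}_7))\,010012$ and its companion for the case $d=3$, $v=2$. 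These certificates are valid at every sufficiently large position in the relevant residue class precisely because the first five columns are periodic; that periodicity argument, not a prefix computation, is the missing ingredient. Once you replace the appeal to the finite prefix by these uniform local certificates, your induction for $v \ge d$ (which the paper carries out only for $d=2$, the cases $d=1,3$ being vacuous or reducing to the explicit length-$30$ powers above) goes through as in the paper, provided you also verify, as the paper does via Part~\ref{stuff about s} of Lemma~\ref{lem:various-properties}, that equality of the $\tau$-images of the two blocks forces equality of their subscripted preimages before you can conclude $\varphi(x')=\varphi(x'')$.
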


\begin{proof}
We proceed by induction on the positions $i$ of letters in $\p\tau(\varphi(\s))$.
As a base case, since we have computed a long enough common prefix of $\word_{5/4}$ and $\p\tau(\varphi(\s))$, decreasing any nonzero letter of $\p\tau(\varphi(\s))$ in position $i\in\{0,1, \dots, 331 039\}$ introduces a $5/4$-power in $\p\tau(\varphi(\s))$ ending at that position.

Now suppose that $i\ge 331 040 = 31\size{\p} + 6\size{\z}$ and assume that decreasing a nonzero letter in any position less than $i$ in $\p\tau(\varphi(\s))$ introduces a $5/4$-power in $\p\tau(\varphi(\s))$ ending at that position.
We will show that decreasing the letter in position $i$ in $\p\tau(\varphi(\s))$ introduces a $5/4$-power in $\p\tau(\varphi(\s))$ ending at that position.
Since $\size{\p}=6764$ and $\size{\z}=20226$, observe that this letter in position $i \geq 128120 = \size{\p} + 6 \size{\z} = \size{\p\tau(\varphi(\z))}$ actually belongs to the suffix $\tau(\varphi^2(\s))$, and its position in $\tau(\varphi^2(\s))$ is $i-\size{\p}-6\size{\z}$.
Every such letter is a factor of $\tau(\varphi(n_j))$ for some $n\in \N$ and some $j\in\{0,1,\dots, 7\}$.
We make use of the array~\eqref{eqn:array-phi} of letters of $\varphi$.

If $i-\size{\p}-6\size{\z} \nequiv 5 \mod 6$, then the letter in position $i$ belongs to one of the first five columns.
Any $0$ letters cannot be decreased.
Observe that the fourth column is made of letters $0$.
Since each letter in the second column is $1$, decreasing any letter $1$ to $0$ in the second column produces a new $5/4$-power of length $5$ of the form $0y0$ between the fourth and second columns.
Since the even-subscript letters in $\varphi(\s)$ form the word $(1_41_60_00_2)^\omega$ by the proof of Part~\ref{stuff about s} in Lemma~\ref{lem:various-properties}, then decreasing any letter $1$ to $0$ in the first, third, or fifth column introduces a $5/4$-power of length $5$.

Otherwise $i-\size{\p}-6\size{\z} \equiv 5 \mod{6}$, that is, the letter in position $i$ is in the sixth column.
These letters arise as $n + d$ for some $n \in \N$ and $d \in \{1, 2, 3\}$.
By Parts~\ref{stuff about s} and \ref{last-letter-phi} of Lemma~\ref{lem:various-properties}, $n + d \geq 2$.
If we decrease $n + d$ to $0$, then we create one of the following $5/4$-powers of length $10$:
\begin{align*}
	& 10 \cdot 1(n+2) 0100 \cdot 10 \\
	& 00 \cdot 1(n+3) 1100 \cdot 00 \\
	& 00 \cdot 0(n+2) 1110 \cdot 00 \\
	& 10 \cdot 0(n+3) 0110 \cdot 10 \\
	& 10 \cdot 1(n+2) 0100 \cdot 10 \\
	& 00 \cdot 1(n+1) 1100 \cdot 00 \\
	& 00 \cdot 0(n+2) 1110 \cdot 00 \\
	& 10 \cdot 0(n+1) 0110 \cdot 10.
\end{align*}
If we decrease $n + d$ to $1$, then we create a new $5/4$-power of length $5$ because each letter in the second column is $1$.

It remains to show that decreasing the letter $n + d$ in position $i$ in $\tau(\varphi^2(\s))$ to a letter $c \in \{2, 3, \dots, n + d - 1\}$ introduces a $5/4$-power ending at that position.
Intuitively, this operation corresponds to decreasing a letter $n_j$ in the preimage $\varphi(\s)$ to $(c-d)_j$ for some $0\le j \le 7$.
In particular, the last letter of $\tau(\varphi((c-d)_j))$ is $c$.
We examine three cases according to the value of $d$.

\textbf{Case 1.}
If $d=1$, then Notation~\ref{not:phi} implies that the corresponding letter in position $i$ in $\varphi^2(\s)$ is $(n+1)_5$ or $(n+1)_1$.
We see that $(n+1)_5$ and $(n+1)_1$ appear in the images of $n_4$ and $n_6$ under $\varphi$.
By Parts~\ref{stuff about s} and \ref{even-subscript} of Lemma~\ref{lem:various-properties}, the only letters with subscripts $4$ and $6$ in $\varphi(\s)$ are $1_4$ and $1_6$.
Therefore $n = 1$, and there is nothing to check since $\{2, 3, \dots, n + d - 1\}$ is the empty set.

\textbf{Case 3.}
If $d=3$, then the corresponding letter in position $i$ in $\varphi^2(\s)$ is $(n+3)_5$ or $(n+3)_1$, which appear in the images of $n_0$ and $n_2$ under $\varphi$.
By Parts~\ref{stuff about s} and \ref{even-subscript} of Lemma~\ref{lem:various-properties}, the only letters with subscripts $0$ and $2$ in $\varphi(\s)$ are $0_0$ and $0_2$, so $n = 0$ and $c \in \{2, 3, \dots, n + d - 1\} = \{2\}$.
By Part~\ref{stuff about s} of Lemma~\ref{lem:various-properties}, the letter four positions before $0_0$ is $1_4$, and the letter four positions before $0_2$ is $1_6$.
Therefore decreasing $n+3$ in $3_5$ or $3_1$ to $c=2$ introduces one of following $5/4$-powers of length $30$:
\begin{align*}
	\tau(\varphi(1_4 \bar{n}_5 \bar{\bar{n}}_6 \bar{\bar{\bar{n}}}_7)) 010012
	&= 010012 \tau(\varphi(\bar{n}_5 \bar{\bar{n}}_6 \bar{\bar{\bar{n}}}_7)) 010012 \\
	\tau(\varphi(1_6 \bar{n}_7 \bar{\bar{n}}_0 \bar{\bar{\bar{n}}}_1)) 111002
	&= 111002 \tau(\varphi(\bar{n}_7 \bar{\bar{n}}_0 \bar{\bar{\bar{n}}}_1)) 111002.
\end{align*}

\textbf{Case 2.}
If $d=2$, then the corresponding letter in position $i$ in $\varphi^2(\s)$ is $(n+2)_3$ or $(n+2)_7$, which appear in the images of $n_1$, $n_3$, $n_5$, and $n_7$ under $\varphi$.
Let $w$ be the length-$(i-\size{\p}-6\size{\z}+1)$ prefix of $\tau(\varphi^2(\s))$ with last letter $n+2$.
Since $i-\size{\p}-6\size{\z}\equiv 5 \mod{6}$, let $u$ be the prefix of $\varphi(\s)$ of length $\frac{i-\size{\p}-6\size{\z}+1}{6}$.
Then $\tau(\varphi(u))=w$ and $\tau(u)$ ends with $n$ as pictured.
\begin{center}
\scalebox{0.6}{
\begin{tikzpicture}
\draw[] (0,0) -- (19,0);
\draw[] (0,1) -- (19,1);
\draw[] (0,0) -- (0,1);
\draw[dashed] (19,0) -- (20,0);
\draw[dashed] (19,1) -- (20,1);

\node[] at (1.5,0.5) {$\p$};
\draw[] (3,0) -- (3,1);
\node[] at (6.5,0.5) {$\tau(\varphi(\z))$};
\draw[] (10,0) -- (10,1);
\node[] at (11.5,0.5) {$\tau(\varphi^2(\z)\varphi^3(\z)\cdots)$};

\draw[] (10,-1.3) -- (10,-0.7);
\draw[] (10,-1) -- (18.4,-1);
\draw[] (18.4,-1.3) -- (18.4,-0.7);
\node[] at (14.5,-0.7) {$w$};

\draw[] (18.4,1.3) -- (18.4,1);
\node[] at (18.4,1.7) {$i$};

\draw[] (3,-1.3) -- (3,-0.7);
\draw[] (3,-1) -- (4.4,-1);
\draw[] (4.4,-1.3) -- (4.4,-0.7);
\node[] at (3.7,-0.7) {$\tau(u)$};
\end{tikzpicture}
}
\end{center}
Let $w'$ be the word obtained by decreasing the last letter $n+2$ in $w$ to $c$, and let $u'$ be the word obtained by decreasing the last letter $n_j$ in $u$ to $(c-2)_j$.
Then $\tau(\varphi(u'))=w'$.
By the induction hypothesis, $\p \tau(u')$ contains a $5/4$-power suffix $xyx$.
Now we consider two subcases, depending on where $xyx$ starts in $\p \tau(u')$ as depicted below.
(Case~2.2 contains two possibilities, but we show that the first does not actually occur.)
\begin{center}
\scalebox{0.6}{
\begin{tikzpicture}

\draw[dashed] (5,-3) -- (5,-6.5);

\draw[] (0,-3) -- (19,-3);
\draw[] (0,-2) -- (19,-2);
\draw[] (0,-3) -- (0,-2);
\draw[] (19,-3) -- (19,-2);

\node[] at (2.5,-2.5) {$\p$};
\draw[] (5,-3) -- (5,-2);
\node[] at (11.5,-2.5) {$\tau(u')$};

\draw[] (9,-3.75) -- (19,-3.75);
\draw[] (9,-4.05) -- (9,-3.45);
\node[] at (10,-3.45) {$x$};
\draw[] (11,-4.05) -- (11,-3.45);
\node[] at (14,-3.45) {$y$};
\draw[] (17,-4.05) -- (17,-3.45);
\node[] at (18,-3.45) {$x$};
\draw[] (19,-4.05) -- (19,-3.45);

\node[] at (0.5,-3.75) {Case 2.1};
\draw[loosely dashed] (0,-4.4) -- (19,-4.4);
\node[] at (0.5,-5.6) {Case 2.2};

\draw[] (0.25,-5) -- (19,-5);
\draw[] (0.25,-5.3) -- (0.25,-4.7);
\node[] at (2.125,-4.7) {$x$};
\draw[] (4,-5.3) -- (4,-4.7);
\node[] at (9.625,-4.7) {$y$};
\draw[] (15.25,-5.3) -- (15.25,-4.7);
\node[] at (17.125,-4.7) {$x$};
\draw[] (19,-5.3) -- (19,-4.7);

\draw[] (4,-6) -- (19,-6);
\draw[] (4,-6.3) -- (4,-5.7);
\node[] at (5.5,-5.7) {$x$};
\draw[] (7,-6.3) -- (7,-5.7);
\node[] at (12,-5.7) {$y$};
\draw[] (16,-6.3) -- (16,-5.7);
\node[] at (17.5,-5.7) {$x$};
\draw[] (19,-6.3) -- (19,-5.7);
\end{tikzpicture}
}
\end{center}

Case 2.1.
Suppose that $xyx$ starts after $\p$ in $\p \tau(u')$, that is, $xyx$ is a suffix of $\tau(u')$.
Write $x=\tau(x')=\tau(x'')$ and $y=\tau(y')$ where $x'y'x''$ is the corresponding subscript-increasing suffix of $u'$.
Since $\size{xy}$ is divisible by $4$, the subscripts in $x'$ and $x''$ are either equal or differ by $4$.
Since $d=2$, the subscripts of the last letters of $x'$ and $x''$ are odd.
If $\size{x}=1$, then $x=c-2$ and $\varphi(x')=\varphi(x'')$ by definition of $\varphi$.
Now if $\size{x}\ge 2$, the subscripts of the penultimate letters of $x'$ and $x''$ are even and either equal each other or differ by $4$.
Since the subscripts cannot differ by $4$ by Part~\ref{stuff about s} of Lemma~\ref{lem:various-properties}, they must be equal.
Since $\tau(x')=\tau(x'')$, we have $x'=x''$, so $\varphi(x')=\varphi(x'')$.
Then $\p \tau(\varphi(\z)) w'=\p \tau(\varphi(\z u'))$ contains the $5/4$-power $\tau(\varphi(x'y'x''))$ as a suffix.
Therefore, decreasing $n+2$ to $c$ introduces a $5/4$-power in $\p\tau(\varphi(\s))$ ending at position $i$.

Case 2.2.
Suppose that $xyx$ starts before $\tau(u')$ in $\p \tau(u')$.
In particular, $\tau(u')$ is a suffix of $xyx$.
Since $i\ge 331 040 = 31\size{\p} + 6\size{\z}$,
\[
 5\size{\p} < \frac{i-\size{\p}-6\size{\z}+1}{6} = \size{u} = \size{\tau(u')} \le \size{xyx} = 5\size{x},
 \]
which implies $\size{\p} < \size{x}$.
Therefore the first $x$ overlaps $\p$ but is not a factor of $\p$.
Suppose the overlap length is at least $5$.
Then the first $x$ contains $20003$ as a factor.
But since $20003$ is never a factor of an image under $\tau \circ \varphi$, so the overlap length of $x$ and $\p$ is at most $4$.
Then the first $x=sv$ is made of a nonempty suffix $s$ of $0003$ followed by a nonempty prefix $v$ of $\tau(u')$ such that $vyx=\tau(u')$.
Write $v=\tau(v')$, $x=\tau(x'')$, and $y=\tau(y')$ where $v'y'x''=u'$.
To get around the fact that $\p$ does not have subscripts, we use $\z$ instead to obtain a preimage of $s$ under $\tau$.
Recall that $0003$ is a common suffix of $\p$ and $\tau(\z)$, and the corresponding suffix in $\z$ is $0_0 0_1 0_2 3_3$.
So let $s'$ be the suffix of $0_0 0_1 0_2 3_3$ such that $s=\tau(s')$.
Now observe that $x'=s'v'$ is a subscript-increasing factor of $\z u'$, overlapping $\z$.
Thus $\varphi(x'y'x'')$ is a subscript-increasing suffix of $\varphi(\z u')$, overlapping $\varphi(\z)$.
By Part~\ref{stuff about s} of Lemma~\ref{lem:various-properties} again, since $\tau(x')=\tau(s'v')=sv=x=\tau(x'')$ and $\size{x}\ge 2$, the subscripts in $x'$ and $x''$ are equal.
Consequently, $x'=x''$ and $\varphi(x')=\varphi(x'')$.
Then $\p \tau(\varphi(\z)) w'=\p \tau(\varphi(\z u'))$ contains the $5/4$-power $\tau(\varphi(x'y'x''))$ as a suffix.
Therefore, decreasing $n+2$ to $c$ introduces a $5/4$-power in $\p\tau(\varphi(\s))$ ending at position $i$.
\end{proof}

Theorems~\ref{thm:p-tau-phi-s-5-4-power-free} and \ref{thm:5over4-lex-least} imply Theorem~\ref{thm:main}, which states that $\word_{5/4} = \p \tau(\varphi(\s))$.

\section{The sequence of letters in $\word_{5/4}$}\label{sec:6-regularity}

In this section we prove Corollary~\ref{cor:5over4}, which states that the sequence $w(i)_{i \geq 0}$ of letters in $\word_{5/4}$ satisfies
\begin{equation}\label{eq:recurrence-6th-column}
	w(6 i + 123061) =
	w(i + 5920) + \begin{cases}
		3	& \text{if $i \equiv 0, 2 \mod 8$} \\
		1	& \text{if $i \equiv 4, 6 \mod 8$} \\
		2	& \text{if $i \equiv 1 \mod 2$}
	\end{cases}
\end{equation}
for all $i\ge 0$.
Then we prove Theorem~\ref{thm:regularity}, which states that a sequence satisfying a certain recurrence is $k$-regular.
In particular, the sequence of letters in $\word_{5/4}$ is $6$-regular.

\begin{proof}[Proof of Corollary~\ref{cor:5over4}]
Since $\size{\p}=6764$ and $\size{\p\tau ( \varphi(\z))}=128120$, we have
\[
w(6764) w(6765) \cdots = \tau (\varphi(\s))
\]
and
\[
w(128120) w(128121) \cdots = \tau (\varphi^2(\s))
\]
by Theorem~\ref{thm:main}.
The first letter in $\varphi(\s)$ is $1_4$, so the definition of $\varphi$ implies
\[
	w(6 i + 128125) =
	w(i + 6764) + \begin{cases}
		1	& \text{if $i \equiv 0, 2 \mod 8$} \\
		3	& \text{if $i \equiv 4, 6 \mod 8$} \\
		2	& \text{if $i \equiv 1 \mod 2$}
	\end{cases}
\]
for all $i\ge 0$.
Thus Equation~\eqref{eq:recurrence-6th-column} holds for $i\ge 844$.
Note that, since $844 \equiv 4 \mod 8$, the cases $i \equiv 0, 2 \mod 8$ and $i \equiv 4, 6 \mod 8$ are switched relative to Equation~\eqref{eq:recurrence-6th-column}.
Finally, we check programmatically that Equation~\eqref{eq:recurrence-6th-column} holds for all $0\le i \le 843$.
Alternatively, this follows from the fact that the length-$844$ suffixes of $\p$ and $\tau(\z)$ are equal by Part~\ref{common-suffix-p-tau-z} of Lemma~\ref{lem:various-properties} and the fact that $\z \varphi(\z)$ is subscript-increasing by Part~\ref{stuff about s} of Lemma~\ref{lem:various-properties}.
\end{proof}

Corollary~\ref{cor:5over4} gives a recurrence for letters $w(6i+1)$ for sufficiently large $i$.
Letters in the other residue classes modulo $6$ are given by the next proposition, which follows directly from Theorem~\ref{thm:main}, the definition of $\varphi$, and the fact that $w(6 \cdot 1127 +0) = 0$.

\begin{proposition}\label{pro: background}
For all $i\ge 1127= \frac{\size{\p}-2}{6}$, the letters of the word $\word_{5/4}$ satisfy
\begin{align*}
	w(6 i + 0) &=
	\begin{cases}
		0		& \text{if $i \equiv 0, 3 \mod 4$} \\
		1		& \text{if $i \equiv 1, 2 \mod 4$}
	\end{cases} \\
	w(6 i + 2) &=
	\begin{cases}
		0		& \text{if $i \equiv 0, 1 \mod 4$} \\
		1		& \text{if $i \equiv 2, 3 \mod 4$}
	\end{cases} \\
	w(6 i + 3) &= 1 \\
	w(6 i + 4) &=
	\begin{cases}
		0		& \text{if $i \equiv 1, 2 \mod 4$} \\
		1		& \text{if $i \equiv 0, 3 \mod 4$}
	\end{cases} \\
	w(6 i + 5) &= 0.
\end{align*}
\end{proposition}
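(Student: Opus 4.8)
The plan is to read the values $w(6i+r)$ for $r \in \{0,2,3,4,5\}$ directly off the array~\eqref{eqn:array-phi}, exploiting the fact that the first five columns of each image $\tau(\varphi(n_j))$ do not depend on $n$ at all, but only on the subscript $j$. By Theorem~\ref{thm:main} we have $\word_{5/4} = \p\,\tau(\varphi(\s))$, and since $\size{\p} = 6764$, every position at least $6764$ lies in the suffix $\tau(\varphi(\s))$. Each letter of $\s$ contributes one length-$6$ block to this suffix, and the first five letters of that block are determined solely by the subscript of the corresponding letter of $\s$. Inspecting the first five columns of~\eqref{eqn:array-phi}, one sees that column $2$ is constant equal to $1$, column $4$ is constant equal to $0$, and columns $1$, $3$, and $5$ are periodic in $j$ with period $4$ (namely $0,1,1,0$; then $0,0,1,1$; then $1,0,0,1$). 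Thus only the residue $j \bmod 4$ matters for these columns.

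First I would pin down how the blocks align with the residue classes modulo $6$. Since $\size{\p} = 6764 \equiv 2 \bmod 6$, the first letter of block $k$ (the image of the $k$th letter of $\s$) sits at position $6764 + 6k \equiv 2 \bmod 6$; hence columns $1$ through $6$ of the blocks fall in residue classes $2,3,4,5,0,1$ modulo $6$, respectively. Consequently $w(6i+2), w(6i+3), w(6i+4), w(6i+5)$ are columns $1,2,3,4$ of block $i - 1127$, while $w(6i+0)$ and $w(6i+1)$ are columns $5,6$ of block $i - 1128$. The sixth column lands in class $1 \bmod 6$ and is precisely the content of Corollary~\ref{cor:5over4}, so it is rightly excluded here. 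Because $\s$ is subscript-increasing and begins with $0_2$ by Part~\ref{stuff about s} of Lemma~\ref{lem:various-properties} together with the definition of $\z$, the $k$th letter of $\s$ has subscript $(2+k) \bmod 8$, so its residue modulo $4$ is $(2+k) \bmod 4$. Substituting $k = i-1127$ and $k = i-1128$ and reducing the constants modulo $4$ then converts each column's period-$4$ pattern in $j$ into the claimed period-$4$ pattern in $i$; the constant columns $2$ and $4$ immediately give $w(6i+3)=1$ and $w(6i+5)=0$.

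This settles the five formulas for every $i \geq 1128$, where all of the relevant positions $6i+r$ with $r \in \{0,2,3,4,5\}$ exceed $6763$ and hence lie in $\tau(\varphi(\s))$. The only position not yet covered is $w(6\cdot 1127 + 0) = w(6762)$, which lies in $\p$ rather than in the morphic suffix; here I would invoke the given value $w(6\cdot 1127 + 0) = 0$ and observe that it is consistent with the formula, since $1127 \equiv 3 \bmod 4$. All remaining positions $6\cdot 1127 + r$ with $r \in \{2,3,4,5\}$ already lie in $\tau(\varphi(\s))$ and are covered by the argument above.

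I expect the main obstacle to be purely bookkeeping: correctly tracking the phase offsets introduced by $\size{\p} \equiv 2 \bmod 6$, so that the six output columns are assigned to the correct residue classes modulo $6$ and to the correct source block $i-1127$ or $i-1128$, and then reducing the constants $1125$ and $1126$ modulo $4$ to align the period-$4$ pattern in the subscript $j$ with the stated period-$4$ pattern in $i$. A sign error or off-by-one in this alignment would swap residue classes; the anchor $w(6\cdot 1127+0)=0$ serves as a convenient consistency check that the phase has been fixed correctly.
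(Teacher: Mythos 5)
Your proposal is correct and follows exactly the route the paper intends: the paper gives no detailed proof, stating only that the proposition ``follows directly from Theorem~\ref{thm:main}, the definition of $\varphi$, and the fact that $w(6\cdot 1127+0)=0$,'' and your argument is a careful expansion of precisely that --- using $\size{\p}\equiv 2\bmod 6$ to align the six columns of the blocks $\tau(\varphi(\s(k)))$ with the residue classes $2,3,4,5,0,1$ modulo $6$, reading the first five columns off the array~\eqref{eqn:array-phi} as functions of the subscript modulo $4$, and handling the single position $w(6762)$ lying in $\p$ via the given anchor value. The bookkeeping (blocks $i-1127$ versus $i-1128$, constants $1125\equiv 1$ and $1126\equiv 2$ modulo $4$) checks out against all five stated formulas.
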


Recall the definition of a $k$-regular sequence~\cite{Allouche--Shallit-1992}.

\begin{definition}
Let $k \ge 2$ be an integer.
For any sequence $s(i)_{i\ge 0}$, the set of subsequences $\{s(k^e i+ j )_{i\ge 0} \colon \text{$e \ge 0$ and $0\le j \le k^e-1$} \}$ is called the \emph{$k$-kernel} of $s(i)_{i\ge 0}$.
A sequence $s(i)_{i\ge 0}$ is \emph{$k$-regular} if the $\Q$-vector space generated by its $k$-kernel is finitely generated.
The \emph{rank} of $s(i)_{i\ge 0}$ is the dimension of this vector space.
\end{definition}

The following theorem is a generalization of~\cite[Theorem 8]{Pudwell--Rowland}, which is the special case $s=0$ and $\ell=1$.

\begin{regularitytheorem*}
Let $k\ge 2$ and $\ell\ge 1$.
Let $d(i)_{i \geq 0}$ and $u(i)_{i \geq 0}$ be periodic integer sequences with period lengths $\ell$ and $k \ell$, respectively.
Let $r,s$ be nonnegative integers such that $r - s + k- 1 \ge 0$.
Let $w(i)_{i\ge 0}$ be an integer sequence such that, for all $0\le m \le k-1$ and all $i\ge 0$,
\begin{equation}\label{eqn:recurrence-regularity}
w(ki+r+m)
=
\begin{cases}
	u(ki+m) & \text{if $0 \leq m \leq k-2$} \\
	w(i+s) + d(i) & \text{if $m = k-1$}.
\end{cases}
\end{equation}
Then $w(i)_{i\ge 0}$ is $k$-regular.
\end{regularitytheorem*}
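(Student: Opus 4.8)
The plan is to produce a single finite-dimensional $\Q$-vector space $\mathcal{W}$ of integer sequences that contains $w=w(i)_{i\ge 0}$ and is closed under the $k$ \emph{decimation operators} $\Lambda_0,\dots,\Lambda_{k-1}$ defined by $(\Lambda_j f)(i)=f(ki+j)$. Every element $w(k^e i+c)_{i\ge 0}$ of the $k$-kernel of $w$ is obtained from $w$ by composing $e$ of these operators, reading off the base-$k$ digits of $c$; hence such a $\mathcal{W}$ automatically contains the entire $k$-kernel, so the span of the kernel is finite-dimensional and $w$ is $k$-regular. I will assemble $\mathcal{W}$ from three $\Lambda_j$-closed pieces: the space $\mathcal{P}$ of all sequences of period $k\ell$, the space $\mathcal{F}_M$ of all sequences supported on $\{0,1,\dots,M\}$, and a finite family of shifted copies of $w$.

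The first two pieces are routine: if $f$ has period $k\ell$ then so does $\Lambda_j f$, since $f(k(i+\ell)+j)=f(ki+j)$; and if $f$ is supported on $\{0,\dots,M\}$ then $\Lambda_j f$ is supported on $\{0,\dots,\floor{M/k}\}$. So $\mathcal{P}$ (dimension $k\ell$) and $\mathcal{F}_M$ (dimension $M+1$) are each $\Lambda_j$-closed. The content is the interaction with $w$. For $t\in\Z$ let $w_t$ be the sequence with $w_t(i)=w(i+t)$ when $i+t\ge 0$ and $w_t(i)=0$ otherwise. Fix $j$ and $t$ and examine $\Lambda_j w_t(i)=w(ki+j+t)$ once $ki+j+t\ge r$. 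Writing the position as $k(\,\cdot\,)+r+m$ and applying the recurrence~\eqref{eqn:recurrence-regularity}, there are two outcomes according to the residue $m=(j+t-r)\bmod k$: if $m\le k-2$ the value is $u(ki+j+t-r)$, a sequence of period dividing $k\ell$ in $i$ and so a member of $\mathcal{P}$; if $m=k-1$ the value is $w(i+g(t))+d(\,\cdot\,)$, where the $d$-term is periodic in $i$ and
\[
g(t)=\ceil{\tfrac{t-r+1}{k}}-1+s .
\]
For each $t$ exactly one $j$ falls in the second case, and in all cases the finitely many small-$i$ discrepancies (from the transient region $ki+j+t<r$ and from the zero-padding of $w_{g(t)}$) are finitely supported. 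Thus $\Lambda_j w_t\in\operatorname{span}\{w_{g(t)}\}+\mathcal{P}+\mathcal{F}_M$, provided $M$ is chosen once and for all larger than these uniformly bounded thresholds.

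It remains to see that only finitely many shifts occur, i.e.\ that the forward orbit $T=\{g^n(0):n\ge 0\}$ is finite; then $\mathcal{W}=\operatorname{span}\{w_t:t\in T\}+\mathcal{P}+\mathcal{F}_M$ is finite-dimensional, contains $w_0=w$, and is $\Lambda_j$-closed by the previous paragraph, which finishes the proof. This finiteness is the step I expect to be the crux. The map $g$ is nondecreasing, and because $k\ge 2$ it contracts in the large: $g(t)-t\to-\infty$ as $t\to+\infty$ and $g(t)-t\to+\infty$ as $t\to-\infty$. A monotone integer map with these properties has a fixed point and drives every orbit monotonically to it, so $T$ is finite. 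The hypothesis $r-s+k-1\ge 0$ — equivalently $ki+r+k-1\ge i+s$ for all $i\ge 0$ — is exactly what places us in the backward-looking regime in which the recurrence naturally determines $w$, and it pins the fixed point (hence all of $T$) at or below $r+k-1$. One mild annoyance remains: $g(0)$ can be negative (for instance $r=1$, $s=0$, $k=2$), so $T$ may contain negative shifts; this is precisely why each $w_t$ is padded by zeros at negative indices and why the resulting bounded discrepancies are swept into $\mathcal{F}_M$.
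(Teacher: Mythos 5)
Your proposal is correct, and it takes a genuinely different route from the paper's proof, even though the two arguments ultimately track the same combinatorics. The paper works directly with the kernel sequences $w(k^e i + j)_{i\ge 0}$: it singles out, for each $e$, the unique residue $j_e$ modulo $k^e$ on which the $m=k-1$ branch can be iterated $e$ times (Lemma~\ref{lem:j-je-mod-powers-k}), shows every other kernel sequence eventually lands in the periodic background $u$ and is therefore eventually periodic with uniformly bounded preperiod, and shows the distinguished sequences $w(k^e i + j_e)_{i \geq 0}$ differ from one another by (shifted) periodic sequences --- the last step requiring a case split on whether $\frac{r-s}{k-1}$ is an integer. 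Your invariant-subspace formulation compresses this bookkeeping into the single self-map $g(t)=\ceil{\frac{t-r+1}{k}}-1+s$ on shift amounts; the paper's quantities $q_e$ are exactly $s-g^e(0)$, and your orbit $T$ is the paper's list of distinguished residues in disguise. Your finiteness argument is sound and absorbs the paper's Case~2.1/2.2 dichotomy uniformly: since $g$ is nondecreasing with increments in $\{0,1\}$, the displacement $g(t)-t$ is nonincreasing and tends to $-\infty$ (resp.\ $+\infty$) as $t\to+\infty$ (resp.\ $-\infty$), so $g$ has a fixed point; and the one sentence you should add is why an orbit cannot overshoot --- namely $g(t)<t$ forces $g(g(t))\le g(t)$ by monotonicity, so a descending orbit stays descending and is bounded below by the region where $g(t)>t$, hence is eventually constant. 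What your approach gives up is the explicit generating set that the paper exploits downstream: Corollary~\ref{cor: bound on rank for w54} and Theorem~\ref{thm: best bound on rank for w54} extract concrete rank bounds for $\word_{5/4}$ from the paper's explicit generators, whereas your bound $\size{T}+k\ell+M+1$ is cleaner to state but less directly tied to the kernel structure. This is a presentational trade-off, not a gap.
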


In proving Theorem~\ref{thm:regularity}, we obtain an upper bound on the rank of $w(i)_{i\ge 0}$ as a $k$-regular sequence.

Two integers $k,\ell \ge 2$ are said to be \emph{multiplicatively dependent} if there exist positive integers $\alpha$ and $\beta$ such that $k^\alpha=\ell^\beta$.
They are \emph{multiplicatively independent} if such integers do not exist.
Bell~\cite{Bell} showed that if a sequence $w(i)_{i \geq 0}$ is both $k$-regular and $\ell$-regular for multiplicatively independent integers $k$ and $\ell$ then $w(i)_{i \geq 0}$ is an eventual quasi-polynomial sequence.
Since the $i$th letter in $\word_{a/b}$ satisfies $w(i) = O(\sqrt{i})$~\cite[Theorem~9]{Pudwell--Rowland}, this implies that the value of $k$ for which the sequence of letters in $\word_{a/b}$ is $k$-regular is unique up to multiplicative dependence.

The proof of Theorem~\ref{thm:regularity} relies on the following technical lemma, which identifies kernel sequences on which Recurrence~\eqref{eqn:recurrence-regularity} can be iterated.

\begin{lemma}\label{lem:j-je-mod-powers-k}
Assume the hypotheses of Theorem~\ref{thm:regularity}.
For every $e\ge 0$, let
\[
	j_e^\star = \frac{k^e-1}{k-1} (r-s+k-1) + s.
\]
For all sufficiently large $e$, if $j \equiv j_e^\star \mod k^h$ where $0\le h \le e$, then we can iteratively apply the case $m=k-1$ of Recurrence~\eqref{eqn:recurrence-regularity} at least $h$ times to $w(k^e i + j)$ for all $i\ge \ceil{\frac{r-s}{k-1}}+1$.
\end{lemma}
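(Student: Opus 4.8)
The plan is to track how a single use of the $m = k-1$ case of Recurrence~\eqref{eqn:recurrence-regularity} transforms an index. Writing $R = r + k - 1$, that case reads $w(kn + R) = w(n + s) + d(n)$ for $n \ge 0$, so it applies to a term $w(N)$ precisely when $N \equiv R \pmod{k}$ and $N \ge R$, and it then replaces the index $N$ by $g(N) := (N - R)/k + s$ while adding the constant $d\bigl((N-R)/k\bigr)$ to the value. The first thing I would record is the arithmetic of $g$ on the distinguished indices: a direct computation gives $g(j_e^\star) = j_{e-1}^\star$; moreover $j_m^\star$ is increasing in $m$ (since $R - s \ge 0$), with $j_1^\star = R$, and since $\tfrac{k^m - 1}{k-1} \equiv 1 \pmod{k}$ for $m \ge 1$ we get $j_m^\star \equiv R \pmod{k}$ for every $m \ge 1$.

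With this in hand, the heart of the proof is an invariant established by induction on the number $t$ of applications. I would show that if $j \equiv j_e^\star \pmod{k^h}$, then after $t \le h$ applications the index has the form $N_t = k^{e-t} i + j^{(t)}$, where $j^{(0)} = j$, $j^{(t+1)} = g\bigl(j^{(t)}\bigr)$, and crucially $j^{(t)} \equiv j_{e-t}^\star \pmod{k^{h-t}}$. The congruence step is immediate from $g(j_e^\star) = j_{e-1}^\star$: writing $j^{(t)} = j_{e-t}^\star + c\,k^{h-t}$ and applying $g$ yields $j^{(t+1)} = j_{e-t-1}^\star + c\,k^{h-t-1}$. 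To see that the $(t+1)$-st application is legitimate I would verify the two hypotheses of the $m = k-1$ case. The congruence $N_t \equiv R \pmod{k}$ holds because $e - t \ge 1$ kills the first summand modulo $k$, while $h - t \ge 1$ together with $j_{e-t}^\star \equiv R \pmod{k}$ handles the second; both inequalities hold for all $t \le h - 1 \le e - 1$, which is exactly the range in which we apply the recurrence.

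The remaining, and main, point is positivity: I must verify $N_t \ge R$ for every $t \le h - 1$. Factoring $N_t = j_{e-t}^\star + k^{h-t}\bigl(k^{e-h} i + c\bigr)$ with $c = (j - j_e^\star)/k^h$, and using that $e - t \ge 1$ forces $j_{e-t}^\star \ge j_1^\star = R$, the expression is $\ge R$ as soon as $k^{e-h} i + c \ge 0$. Over all admissible $j$ with $0 \le j < k^e$ the most demanding choice is the least representative of the residue class, for which the condition becomes $i \ge \floor{j_e^\star/k^h}/k^{e-h}$, a quantity bounded above by $j_e^\star/k^e$. This is precisely where both quantifiers of the statement enter: since $j_e^\star/k^e \to \frac{r-s}{k-1} + 1$ as $e \to \infty$, for all sufficiently large $e$ the hypothesis $i \ge \ceil{\frac{r-s}{k-1}} + 1$ guarantees $k^{e-h} i + c \ge 0$ for every $h \le e$ simultaneously. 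The standing assumption $r - s + k - 1 \ge 0$ is used here to ensure $R - s \ge 0$, hence $j_m^\star \ge s \ge 0$, so that the descent never produces a negative index.

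I expect the congruence bookkeeping to be routine once the identity $g(j_e^\star) = j_{e-1}^\star$ is isolated, since it turns the whole question into a single arithmetic fact about $g$. The genuine obstacle is the positivity estimate: making the bound $i \ge \ceil{\frac{r-s}{k-1}} + 1$ work \emph{uniformly} across all $h \le e$ and all representatives $j$ at once. This uniformity is what pins down the ``sufficiently large $e$'' hypothesis, via the convergence of $j_e^\star/k^e$ to $\frac{r-s}{k-1} + 1$ and the need for the small error term $\floor{j_e^\star/k^h}/k^{e-h} - p$ (with $p = \ceil{\frac{r-s}{k-1}}$) to drop below $1$.
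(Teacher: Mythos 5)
Your proposal is correct and follows essentially the same route as the paper's proof: your map $g$ and the identity $g(j_e^\star)=j_{e-1}^\star$ are exactly the paper's auxiliary functions $f_{e,t,j}$ and the relation $f_{e,t,j}(i)+s=k\,f_{e,t+1,j}(i)+r+k-1$ in disguise (indeed $N_t=f_{e,t,j}(i)+s$), the congruence bookkeeping matches the paper's integrality check modulo $k^{h+1}$, and your positivity bound reduces to the same condition $i\ge j_e^\star/k^e$ with limit $\frac{r-s+k-1}{k-1}$. No gaps beyond the small boundary subtlety in "sufficiently large $e$" that the paper's own argument shares.
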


\begin{proof}
The proof consists of two steps.
First, we will show that the statement holds for all sufficiently large $i$.
Second, we establish the lower bound on $i$, which is independent of $e$ for sufficiently large $e$.
We would like to iterate the case $m=k-1$ of Recurrence~\eqref{eqn:recurrence-regularity}, namely
\begin{equation}\label{eqn:case k-1}
	w(ki+r+ k - 1) = w(i+s) + d(i).
\end{equation}

Define
\begin{align*}
	f_{e,t,j}(i)
	&= k^{e-t} i + \frac{j-s}{k^t} -\frac{(1+k+\cdots+k^{t-1})(r - s + k -1)}{k^t} \\
	&=k^{e-t} i + \frac{j-s}{k^t} -\frac{k^t-1}{k^t(k-1)} (r - s + k -1),
\end{align*}
which will be useful to describe iterations of Recurrence~\eqref{eqn:case k-1}.
One checks that $f_{e,t,j}(i) + s = k f_{e,t+1,j}(i) + r +k - 1$.
Observe that if $f_{e,t+1,j}(i) \ge 0$, then $f_{e,t,j}(i)\ge 0$ since $r - s + k- 1 \ge 0$ by assumption.
Recurrence~\eqref{eqn:case k-1} gives
\begin{align}
	w(f_{e,t,j}(i)+s)
	&=w(k f_{e,t+1,j}(i) + r +k - 1) \nonumber \\
	&=w(f_{e,t+1,j}(i)+s) + d(f_{e,t+1,j}(i)) \label{eqn:special case with f}
\end{align}
if $f_{e,t+1,j}(i) \geq 0$.
We proceed by induction on $h$.
If $h = 0$, then the statement of the lemma is clear.
Let $j \equiv j_e^\star \mod k^{h + 1}$ with $0\le h\le e-1$, and inductively assume that we can iteratively apply Recurrence~\eqref{eqn:case k-1} $h$ times to $w(k^e i + j)$ for all sufficiently large $i$.
By the induction hypothesis, iteratively applying Recurrence~\eqref{eqn:special case with f} to $w(k^e i + j)$ for $t=0$, \dots, $t=h-1$ yields
\begin{equation}\label{eqn:h-applications}
	w(k^e i + j)
	= w(f_{e,0,j}(i)+s)
	= w(f_{e,h,j}(i)+s) + \sum_{t=0}^{h-1} d(f_{e,t+1,j}(i))
\end{equation}
for all sufficiently large $i$.
We claim we can apply Recurrence~\eqref{eqn:case k-1} (in the form of Recurrence~\eqref{eqn:special case with f}) one more time to obtain that $w(k^e i + j)$ is equal to
\begin{align*}
	w(& f_{e,h+1,j}(i)+s) + \sum_{t=0}^{h} d(f_{e,t+1,j}(i))\\
	&= w\!\left( k^{e-h-1} i + \frac{j-s}{k^{h+1}} -\frac{(1+k+\cdots+k^h)(r - s + k -1)}{k^{h+1}} \right)
	+ \sum_{t=0}^{h} d(f_{e,t+1,j}(i))
\end{align*}
for all sufficiently large $i$.
Indeed,
\[
\frac{j-s}{k^{h+1}} -\frac{(1+k+\cdots+k^h)(r - s + k -1)}{k^{h+1}}
\]
is an integer since, by assumption,
\begin{align*}
	j
	&\equiv j_e^\star \mod k^{h + 1} \\
	&\equiv \frac{k^e - 1}{k - 1} (r-s+k-1) + s \mod k^{h + 1} \\
	&\equiv \left(1 + k + \dots + k^{e - 2} + k^{e-1}\right) (r-s+k-1) + s \mod k^{h + 1} \\
	&\equiv \left(1 + k + \dots + k^{h - 1} + k^h\right) (r-s+k-1) + s \mod k^{h + 1}.
\end{align*}

It remains to establish the bound $i\ge \ceil{\frac{r-s}{k-1}}+1$.
To apply Recurrence~\eqref{eqn:case k-1} $h$ times to $w(k^e i + j)$, we need $f_{e,t,j}(i) \ge 0$ for all $t\in\{1,\dots,h\}$.
It follows by definition of $f_{e,t,j}(i)$ that $f_{e,t,j}(i) \ge f_{e,t,0}(i)$.
If follows from $f_{e,t,0}(i) + s = k f_{e,t+1,0}(i) + r +k - 1$ that $f_{e,t,j}(i) \ge f_{e,e,0}(i)$ since $t\le h \le e$.
We have
\[
	f_{e,e,0}(i) \ge 0 \; \Longleftrightarrow \; i \ge \frac{k^e-1}{k^e(k-1)} (r - s + k -1) + \frac{s}{k^e}.
\]
As $e$ gets large, the right side of the previous inequality approaches the finite limit $\frac{r-s+k-1}{k-1}$.
Therefore, for all sufficiently large $e$ and for all integers $i \ge \frac{r-s+k-1}{k-1}$, we can apply Recurrence~\eqref{eqn:case k-1} $h$ times to $w(k^e i + j)$, as desired.
\end{proof}

We now prove Theorem~\ref{thm:regularity}.
We will define a sequence $(j_e)_{e \geq 0}$, where each $j_e$ is the unique residue modulo $k^e$ for which we can apply Recurrence~\eqref{eqn:recurrence-regularity} the maximal number of times to $w(k^e i + j_e)$.
For $\word_{5/4}$, the sequence $(j_e)_{e \geq 0}$ is
\[
	0, 1, 31, 31, 895, 7375, 38479, 38479, 318415, 1998031, \dots.
\]
We will see in the proof of Theorem~\ref{thm: best bound on rank for w54} that applying the recurrence to $w(k^e i + j_e)$ for large $e$ produces $w(k^{e - 1} i + j_{e - 1})$.

\begin{proof}[Proof of Theorem~\ref{thm:regularity}]
We show that the $\Q$-vector space generated by the $k$-kernel of $w(i)_{i\ge 0}$ is finitely generated.
We will see that subsequences $w(k^e i + j)_{i\ge 0}$ for certain values of $j$ behave differently than others.
Namely, for most sequences, iteratively applying Recurrence~\eqref{eqn:recurrence-regularity} to all but finitely many terms brings us into the periodic background $u$ (this is Case~1 below), but for certain sequences we stay in the self-similar column (Case~2).

For every $e\ge 0$, we let
\[
	j_e = j_e^\star - k^e q_e = \frac{k^e-1}{k-1} (r-s+k-1) + s -k^e q_e,
\]
where $j_e^\star$ is defined as in Lemma~\ref{lem:j-je-mod-powers-k} and $q_e$ is the unique integer such that $0\le j_e^\star - k^e q_e < k^e$.
Since $r - s + k- 1\ge 0$, $q_e$ is nonnegative.
Since $0\le \frac{j_e}{k^e} < 1$, we have $Q_e-1 < q_e \le Q_e$ with
\[
Q_e = \frac{k^e-1}{k^e(k-1)} (r-s+k-1) + \frac{s}{k^e}.
\]
In particular, $q_e=\floor{Q_e}$.
As $e$ gets large, $Q_e$ approaches the finite limit $\frac{r-s+k-1}{k-1}$, so the integers $q_e$ are the same for all $e \ge E$ for some integer $E\ge 0$.
We take $E$ to be minimal.
We show that there exists $M \geq 0$ such that, for all $e\ge 0$ and $0\le j \le k^e-1$, $w(k^e i + j)_{i\ge 0}$ belongs to the $\Q$-vector space generated by the finite set
\[
	\left\{ w(k^e i + j)_{i \geq 0} \colon 0 \leq \text{$e \leq E - 1$ and $0\le j \le k^e-1$} \right\}
	\cup
	\left\{ w(k^e i + j_e)_{i \geq 0} \colon 0 \leq e \leq M \right\}
\]
and finitely many eventually periodic sequences.

As in the proof of Lemma~\ref{lem:j-je-mod-powers-k}, define
\[
f_{e,t,j}(i)
=k^{e-t} i + \frac{j-s}{k^t} -\frac{k^t-1}{k^t(k-1)} (r - s + k -1).
\]

\textbf{Case 1.}
First, we consider subsequences $w(k^e i + j)_{i\ge 0}$ with $0\le j \leq k^e - 1$ and $j \neq j_e$.
We show that all but finitely many have tails that can be expressed in terms of $u$ by iterating Recurrence~\eqref{eqn:recurrence-regularity}.
Let $h$ be maximal such that $j \equiv j_e \mod k^h$.
By Lemma~\ref{lem:j-je-mod-powers-k}, we can iteratively apply Recurrence~\eqref{eqn:case k-1} $h$ times to $w(k^e i + j)$ for all $e\ge E$ and $i\ge q_E + 1$, giving
\[
w(k^e i + j)
= w(f_{e,h,j}(i)+s) + \sum_{t=0}^{h-1} d(f_{e,t+1,j}(i))
\]
from Equation~\eqref{eqn:h-applications}.
Since $h$ is maximal, we cannot apply Recurrence~\eqref{eqn:case k-1} an additional time.
If $i\ge q_E+1$, then
\[
f_{e,h,j}(i) + s \ge k f_{e,e,0}(i) + r +k - 1 \ge r,
\]
as in the proof of Lemma~\ref{lem:j-je-mod-powers-k}.
So we can apply the case $m\neq k-1$ of Recurrence~\eqref{eqn:recurrence-regularity} instead to $w(f_{e,h,j}(i)+s)$.
Therefore
\begin{equation}\label{eqn: one more time}
	w(k^e i + j)
	= u(f_{e,h,j}(i)+s-r) + \sum_{t=0}^{h-1} d(f_{e,t+1,j}(i)),
\end{equation}
so $w(k^e i + j)_{i\ge q_E + 1}$ is a periodic sequence with period length at most $k \ell$ since $u$ and $d$ are periodic sequences with period lengths dividing $k \ell$.
Therefore $w(k^e i + j)_{i\ge 0}$ is an eventually periodic sequence with preperiod length at most $q_E + 1$, which is independent of $e$ and $j$.
It suffices to include generators for eventually periodic sequences with preperiod length $q_E + 1$.
Let $G_{k \ell}$ be the standard basis for periodic sequences with period length $k\ell$ (that is, with periods of the form $0, \dots, 0, 1, 0, \dots, 0$).
For all $m\ge 0$, let $v_m(i)_{i\ge 0}$ be the sequence defined by $v_m(m)=1$ and $v_m(i)=0$ for all $i\neq m$.
Let $H_{q_E + 1} = \{ v_m(i)_{i\ge 0} \colon 0\le m \le q_E\}$.
Each sequence $w(k^e i + j)_{i \geq 0}$ for $e \geq 0$ and $j\neq j_e$ belongs to the $\Q$-vector space generated by
\[
	\left\{ w(k^e i + j)_{i \geq 0} \colon 0 \leq e \leq E-1 \text{ and } 0\le j \le k^e-1 \right\}
	\cup G_{k \ell} \cup H_{q_E + 1},
\]
which is finite-dimensional.

\textbf{Case 2.}
Second, we examine subsequences $w(k^e i + j_e)_{i\ge 0}$.
We show that these sequences do not depend on $u$ and that all but finitely many of them are essentially generated by one.
We defined $j_e$ in such a way that $k^e i + j_e=f_{e,0,j_e}(i)+s$.
From Equation~\eqref{eqn:h-applications}, $e$ applications of Recurrence~\eqref{eqn:recurrence-regularity} yield
\begin{align*}
	w(k^e i + j_e)
	&= w(f_{e,e,j_e}(i) + s) + \sum_{t=0}^{e-1} d(f_{e,t+1,j_e} (i)) \\
	&= w(i-q_e + s) + \sum_{t=0}^{e-1} d(f_{e,t+1,j_e} (i))
\end{align*}
for all $i\ge q_e$, after expanding $f_{e,e,j_e}(i)$.
However, we would like to get a relation of this form that holds for all $i \ge 0$.
It is possible that $e$ applications of the recurrence are too many, and in such cases we use $h$ applications and choose $h$ accordingly.
For $h\le e$, Equation~\eqref{eqn:h-applications} with $j = j_e$ gives
\begin{equation}\label{eqn: je difference 1}
w(k^e i + j_e)
=
w\!\left( f_{e,h,j_e}(i) + s \right) + \sum_{t=0}^{h-1} d(f_{e,t+1, j_e} (i)),
\end{equation}
as long as $f_{e,h,j_e}(i)\ge 0$.
We consider two cases, because if $\frac{r-s}{k-1}$ is an integer then the integers $j_e$ are the same when $e$ is sufficiently large.

Case 2.1.
Suppose $\frac{r-s}{k-1}$ is not an integer.
The sequence $(Q_e)_{e \geq 0}$ approaches its limit from either above or below; in either case $q_e = \floor{Q_e} < \frac{r-s+k-1}{k-1}$ for sufficiently large $e$.
(Note the strict inequality, since $\frac{r-s}{k-1}$ is not an integer.)
We would like $f_{e,h,j_e}(i)\geq 0$.
Since $f_{e,h,j_e}(i)$ is an increasing function of $i$, it suffices to guarantee that $f_{e,h,j_e}(0) \geq 0$.
We get
\begin{align*}
f_{e,h,j_e}(0) \geq 0 \;
&\Longleftrightarrow \; -q_e k^{e-h} + \frac{k^{e-h}-1}{k-1} (r-s+k-1) \geq 0 \\
&\Longleftrightarrow \; k^{h-e} \leq 1 - \frac{q_e \, (k-1)}{r-s+k-1} \\
&\Longleftrightarrow \; h \leq e + \log_k \left( 1 - \frac{q_e \, (k-1)}{r-s+k-1} \right).
\end{align*}
Since $q_e < \frac{r-s+k-1}{k-1}$, the argument of the $\log_k$ is in the interval $(0,1)$.
We use the largest integer $h$ satisfying the inequality, namely
\[
	h_e = e + \floor{ \log_k \left( 1 - \frac{q_e \, (k-1)}{r-s+k-1} \right) }.
\]
We conclude that, for all $e \geq E$,
\begin{equation}\label{eqn: je difference 2}
	w(k^e i + j_e)
	- \sum_{t=0}^{h_e-1} d(f_{e,t+1, j_e} (i)),
\end{equation}
is independent of $e$ (recall that $q_e=q_E$ for all $e\ge E$).
Therefore each sequence $w(k^e i + j_e)_{i \geq 0}$ for $e \geq 0$ belongs to the $\Q$-vector space generated by
\[
	\left\{ w(k^e i + j_e)_{i \geq 0} \colon 0 \leq e \leq E \right\}
	\cup
	G_{k \ell},
\]
which is finite-dimensional.
(Recall that $G_{k \ell}$ is a basis for the periodic sequences with period length $k\ell$.)

Case 2.2.
If $\frac{r-s}{k-1}$ is an integer, then, for sufficiently large $e$, we have $q_e = \floor{Q_e} = \frac{r-s+k-1}{k-1}$.
(In this case the argument of $\log_k$ in the definition of $h_e$ would be $0$ as defined in the previous case, so we need a different approach.)
Let $E'$ be the smallest such integer $e$ such that $q_e = \frac{r-s+k-1}{k-1}$ and $f_{e,0,j_e}(i)\ge 0$ for all $i\ge 1$.
For all $e \geq E'$,
\[
	j_e = \frac{k^e - 1}{k-1} (r-s+k-1) + s -k^e q_e
	= s - \frac{r-s+k-1}{k-1}
\]
is independent of $e$; let $J = j_e$.
Since
\[
f_{e,0,j_e}(i)
=f_{e,0,J}(i)
= k^e i - \frac{r-s+k-1}{k-1},
\]
a simple computation shows that $f_{e+1,0,J}(i) + s=k f_{e,0,J}(i) + r +k-1$.
If $i = 0$, $w(k^{e+1} i +J) = w(J) = w(k^e i + J)$, so the two sequences $w(k^e i +J)_{i\ge 0}$ and $w(k^{E'} i +J)_{i\ge 0}$ agree on the first term for all $e\ge E'$.
If $i \geq 1$, then $f_{e,0,J}(i) \geq 0$ by definition of $E'$, so we can apply Recurrence~\eqref{eqn:recurrence-regularity} to obtain
\begin{align*}
	w(k^{e + 1} i + J)
	&= w(f_{e+1,0,J}(i) + s) \\
	&= w\!\left( k f_{e,0,J}(i) + r +k-1 \right) \\
	&= w\!\left(f_{e,0,J}(i) + s\right) + d(f_{e,0,J}(i)) \\
	&= w(k^e i + J) + d(f_{e,0,J}(i)).
\end{align*}
Therefore each sequence $w(k^e i + j_e)_{i \geq 0}$ for $e \geq 0$ belongs to the $\Q$-vector space generated by
\[
	\left\{ w(k^e i + j_e)_{i \geq 0} \colon 0 \leq e \leq E' \right\}
	\cup
	\left\{ \sigma(g(i)_{i\ge 1}) \colon g\in G_{k \ell} \right\},
\]
where $\sigma$ is the right shift operator, which prepends a $0$ to the front of a sequence.
Again this vector space is finite-dimensional.

We have shown that the $k$-kernel is contained in the $\Q$-vector space generated by
\begin{multline*}
	\left\{ w(k^e i + j)_{i \geq 0} \colon 0 \leq e \leq E-1 \text{ and } 0\le j \le k^e-1 \right\} \\
	\cup G_{k \ell} \cup H_{q_E + 1} \cup \left\{ w(k^e i + j_e)_{i \geq 0} \colon 0 \leq e \leq E \right\}
\end{multline*}
if $\frac{r-s}{k-1}$ is not an integer and
\begin{multline*}
	\left\{ w(k^e i + j)_{i \geq 0} \colon 0 \leq e \leq E-1 \text{ and } 0\le j \le k^e-1 \right\} \\
	\cup G_{k \ell} \cup H_{q_E + 1} \cup \left\{ w(k^e i + j_e)_{i \geq 0} \colon 0 \leq e \leq E' \right\}
	\cup \{ \sigma(g(i)_{i\ge 1}) \colon g\in G_{k \ell} \}
\end{multline*}
if $\frac{r - s}{k - 1}$ is an integer.
Then
\[
	M =
	\begin{cases}
		E	& \text{if $\frac{r - s}{k - 1}$ is not an integer} \\
		E'	& \text{if $\frac{r - s}{k - 1}$ is an integer}
	\end{cases}
\]
is the constant mentioned at the beginning of the proof.
\end{proof}

\begin{corollary}\label{cor: bound on rank for w54}
The sequence of letters in $\word_{5/4}$ is a $6$-regular sequence with rank at most $79472$.
\end{corollary}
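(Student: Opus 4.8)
The plan is to recognize the recurrence governing $w(i)_{i\ge 0}$ as an instance of the hypothesis of Theorem~\ref{thm:regularity} and then to read off the rank bound from the explicit spanning set produced in that theorem's proof. I would take $k = 6$ and $\ell = 8$, and set $r = 123056$ and $s = 5920$, so that the case $m = k-1$ of Recurrence~\eqref{eqn:recurrence-regularity}, namely $w(6i + r + 5) = w(6i + 123061) = w(i + 5920) + d(i)$, is exactly the statement of Corollary~\ref{cor:5over4}; accordingly I let $d(i)_{i\ge 0}$ be the period-$8$ sequence $3,2,3,2,1,2,1,2,\dots$ appearing there. For the background I let $u(i)_{i\ge 0}$ be the period-$(k\ell)=48$ sequence determined by $u(6i+m) = w(6i+r+m)$ for $0\le m\le 4$, with the (irrelevant) values of $u$ on positions congruent to $5$ modulo $6$ set to $0$.

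The one genuinely delicate point, and the main obstacle, is to verify that Recurrence~\eqref{eqn:recurrence-regularity} holds for \emph{every} $i\ge 0$: Theorem~\ref{thm:regularity} demands the recurrence from the very first term, whereas $\word_{5/4}$ only settles into its self-similar pattern after a long transient. The case $m=k-1$ causes no trouble because Corollary~\ref{cor:5over4} is valid for all $i\ge 0$. For $0\le m\le k-2$ the relevant positions satisfy $6i+r+m \ge r = 123056 > 6762 = 6\cdot 1127$, so they already lie in the range where Proposition~\ref{pro: background} applies; writing $123056 = 6\cdot 20509 + 2$ shows that each background column $w(6i+r+m)$ is periodic in $i$ with period dividing $4$, hence is captured by a single period-$48$ sequence $u$, so $w(6i+r+m) = u(6i+m)$ indeed holds for all $i\ge 0$. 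The remaining hypotheses are immediate, since $r,s\ge 0$ and $r-s+k-1 = 117141 \ge 0$. Theorem~\ref{thm:regularity} then yields that $w(i)_{i\ge 0}$ is $6$-regular.

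It remains to bound the rank, for which I would track the cardinality of the spanning set exhibited in the proof of Theorem~\ref{thm:regularity}. Since $\frac{r-s}{k-1} = \frac{117136}{5}$ is not an integer, the proof falls into Case~2.1, so $M = E$ and the extra shifted generators $\{\sigma(g) \colon g\in G_{k\ell}\}$ do not arise. To locate $E$ and $q_E$ I would use that $Q_e$ increases to the limit $\frac{r-s+k-1}{k-1} = \frac{117141}{5} = 23428.2$ from below, with the gap governed by $\frac{r+k-1-sk}{k-1} = \frac{87541}{5}$; since $6^6 = 46656 < 87541 < 279936 = 6^7$, the value $q_e = \floor{Q_e}$ first attains its eventual value $23428$ at $e = 7$, so $E = 7$ and $q_E = 23428$. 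Summing the sizes of the four families $\{w(k^e i + j)\colon 0\le e\le E-1,\ 0\le j\le k^e-1\}$, $G_{k\ell}$, $H_{q_E+1}$, and $\{w(k^e i + j_e)\colon 0\le e\le E\}$ in the spanning set gives
\[
\frac{k^E-1}{k-1} + k\ell + (q_E+1) + (E+1) = 55987 + 48 + 23429 + 8 = 79472,
\]
which is the claimed upper bound on the rank.
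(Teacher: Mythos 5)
Your proposal is correct and follows essentially the same route as the paper: instantiate Theorem~\ref{thm:regularity} with $k=6$, $\ell=8$, $r=123056$, $s=5920$, the period-$8$ difference sequence $d$, and the period-$48$ background $u$, then count the generators $\sum_{e=0}^{E-1}k^e + k\ell + (q_E+1) + (E+1) = 79472$ with $E=7$ and $q_E=23428$. You supply somewhat more detail than the paper does at this point (explicitly checking that the recurrence holds for all $i\ge 0$ via Corollary~\ref{cor:5over4} and Proposition~\ref{pro: background}, and justifying $E=7$ from the inequality $6^6 < 87541 \le 6^7$), and these verifications are accurate.
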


\begin{proof}
Use $k=6$, $\ell=8$, $r=123056$, and $s=5920$ in Theorem~\ref{thm:regularity}.
Let $u = \tau(\varphi(0_0 0_1 0_2 0_3 0_4 0_5 0_6 0_7))^\omega$ be the word made up of the background, and let $d=(32321212)^\omega$.
We find $\frac{r-s+k-1}{k-1} = 23428 + \frac{1}{5}$, so $q_e \to 23428$ and $E = 7$.

Since $\frac{r - s}{k - 1}$ is not an integer, the proof of Theorem~\ref{thm:regularity} shows that the $6$-kernel of $\word_{5/4}$ is a subset of the $\Q$-vector space generated by
\begin{multline}\label{eqn: large set of generators}
	\left\{ w(k^e i + j)_{i \geq 0} \colon 0 \leq e \leq E-1 \text{ and } 0\le j \le k^e-1 \right\} \\
	\cup G_{k \ell} \cup H_{q_E + 1} \cup \left\{ w(k^e i + j_e)_{i \geq 0} \colon 0 \leq e \leq E \right\},
\end{multline}
which has dimension at most
\[
\sum_{e=0}^{E-1} k^e + k\ell + (q_E + 1) + (E + 1) = 79472.
\qedhere
\]
\end{proof}

In fact the rank is much smaller.

\begin{theorem}\label{thm: best bound on rank for w54}
The sequence of letters in $\word_{5/4}$ is a $6$-regular sequence with rank $188$.
\end{theorem}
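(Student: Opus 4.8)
The plan is to pin down the rank by sandwiching: exhibit $188$ explicit $6$-kernel sequences that are $\Q$-linearly independent (giving rank $\ge 188$), and show that the entire $\Q$-vector space generated by the $6$-kernel is spanned by $188$ sequences (giving rank $\le 188$). Corollary~\ref{cor: bound on rank for w54} already guarantees that $w(i)_{i\ge 0}$ is $6$-regular, so only the exact dimension remains to be determined.

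For the upper bound, I would compute a finite, explicitly stored set $V$ of sequences, each lying in the $\Q$-span of the $6$-kernel, with the two properties that (i) $w(i)_{i\ge 0} \in \langle V\rangle$, and (ii) $\langle V\rangle$ is closed under the six subsequence operations $t(i)_{i\ge 0}\mapsto t(6i+j)_{i\ge 0}$ for $0\le j\le 5$. Any such $V$ automatically has $\langle V\rangle$ containing the whole $6$-kernel, so that $\dim\langle V\rangle$ bounds the rank from above; if moreover $V$ is linearly independent, the bound is exact. To build $V$ I would start from $w(i)_{i\ge 0}$ and repeatedly split each stored sequence into its six subsequences, discarding any that already lie in the current span and recording the others, using Corollary~\ref{cor:5over4} and Proposition~\ref{pro: background} to evaluate the subsequences symbolically. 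The structure established in the proof of Theorem~\ref{thm:regularity} is what makes this process terminate: every subsequence $w(6^e i+j)_{i\ge 0}$ with $j\ne j_e$ is eventually periodic by Equation~\eqref{eqn: one more time}, and these tails contribute only a fixed finite-dimensional space of periodic and transient generators; the only genuinely self-similar subsequences are the $w(6^e i+j_e)_{i\ge 0}$, and applying Recurrence~\eqref{eqn:case k-1} sends $w(6^e i+j_e)_{i\ge 0}$ to $w(6^{e-1}i+j_{e-1})_{i\ge 0}$ plus a periodic correction, so no new members of this chain arise once $e$ exceeds $E=7$. Consequently the splitting process stabilizes, and a finite linear-algebra computation over $\Q$ returns $\dim\langle V\rangle=188$.

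The subtle point is that each relation $t(6i+j)_{i\ge 0}=\sum_k c_k\,v_k(i)_{i\ge 0}$ found computationally must be shown to hold for \emph{all} $i$, not merely for the finitely many terms inspected. I would justify every such relation by combining a finite check of initial terms with the recurrences: for $i$ large, both sides reduce, via Corollary~\ref{cor:5over4}, Proposition~\ref{pro: background}, and the periodicity of the background $u$ and of $d=(32321212)^\omega$, to the same expression in terms of a shorter subsequence of $w$, while the explicitly computed length-$331040$ prefix of $\word_{5/4}$ already used in the paper settles the remaining small-$i$ cases. Propagating these exact relations through the splitting tree confirms that $\langle V\rangle$ is genuinely closed and $188$-dimensional.

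For the lower bound I would select $188$ of the recorded $6$-kernel sequences, evaluate each at a common range of indices large enough to clear the transients, and verify over $\Q$ that the resulting matrix has rank $188$; since any $\Q$-linear dependence among the sequences would persist at every index and hence lower the matrix rank, full rank rigorously establishes independence. The main obstacle is the upper-bound half: ensuring that the stored set $V$ is exactly closed under all six subsequence maps and that each constituent relation is exact for all $i$, which requires careful bookkeeping of the transient terms coming from the double transient ($s=5920\ne 0$) and from the common suffix $0003$ of $\p$ and $\tau(\z)$. As elsewhere in the paper, these computations are carried out in \emph{Mathematica}.
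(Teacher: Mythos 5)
Your proposal is correct and follows essentially the same route as the paper: both reduce the problem to finite linear algebra by exploiting that every kernel sequence $w(6^e i + j)_{i\ge 0}$ with $j \neq j_e$ is eventually periodic with uniformly bounded preperiod (at most $4046$) and period dividing $4$, so that agreement on a fixed finite prefix (the paper uses $4050$ terms) rigorously certifies each linear relation, while the self-similar sequences $w(6^e i + j_e)_{i\ge 0}$ collapse for $e > E = 7$ onto $w(6^E i + j_E)_{i\ge 0}$ plus a constant. The paper organizes your generic closure computation explicitly as a direct sum $V \oplus W \oplus \langle w(6^{E+1} i + j_E)_{i\ge 0}\rangle$ of dimensions $179 + 8 + 1 = 188$ (note the one extra generator at level $E+1$ with offset $j_E$, not $j_{E+1}$, which your splitting process would discover automatically), and verifies independence by row reduction exactly as you propose.
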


To prove Theorem~\ref{thm: best bound on rank for w54}, we first reduce the bound from Corollary~\ref{cor: bound on rank for w54} to $4078$.

\begin{proposition}\label{pro: better bound on rank for w54}
The sequence of letters in $\word_{5/4}$ is a $6$-regular sequence with rank at most $4078$.
\end{proposition}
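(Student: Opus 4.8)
The plan is to prune the size-$79472$ spanning set exhibited in the proof of Corollary~\ref{cor: bound on rank for w54}, namely
\begin{multline*}
\left\{ w(6^e i + j)_{i \geq 0} \colon 0 \leq e \leq E-1,\ 0\le j \le 6^e-1 \right\} \cup G_{48} \\
\cup H_{q_E + 1} \cup \left\{ w(6^e i + j_e)_{i \geq 0} \colon 0 \leq e \leq E \right\},
\end{multline*}
where $E=7$ and $q_E = 23428$. Its two dominant contributions are the $\sum_{e=0}^{E-1}6^e = 55987$ low-level kernel sequences and the $q_E+1 = 23429$ preperiod indicators in $H_{q_E+1}$. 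I would eliminate the former entirely and shrink the latter by almost a factor of six, keeping only $G_{48}$ (in fact a smaller periodic basis), the $8$ self-similar generators, and a short block of indicators.

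First I would discard the low-level kernel sequences. They were retained only because the Case~1 argument in the proof of Theorem~\ref{thm:regularity} proved eventual periodicity for $e\ge E$. However, since $Q_e$ increases to its limit $\frac{r-s+k-1}{k-1}=23428.2$, we have $q_e\le q_E$ for every $e$, so the constraint $f_{e,e,0}(i)\ge 0$ already holds once $i\ge q_E+1$; hence the Case~1 reduction (iterating Recurrence~\eqref{eqn:case k-1} maximally and then applying the case $m\ne k-1$) applies verbatim at \emph{every} level with the single preperiod bound $q_E+1$. Thus each $w(6^e i+j)_{i\ge 0}$ with $j\ne j_e$ lies in the span of $G_{48}\cup H_{q_E+1}$ irrespective of $e$, and no sequence with $e\le E-1$ need be kept. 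Combined with the stabilization of the self-similar column (Case~2.1 shows that for $e\ge E$ the sequence $w(6^e i+j_e)_{i\ge 0}$ differs from $w(6^E i+j_E)_{i\ge 0}$ by a periodic sequence), this collapses the generating set to $G_{48}\cup H_{q_E+1}\cup\{w(6^e i+j_e)_{i\ge 0}\colon 0\le e\le E\}$.

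The crux is then replacing $H_{q_E+1}$, whose length $q_E+1\approx 23428$ is an artifact of choosing $r=123056$ large enough for Theorem~\ref{thm:regularity} to hold for every $i$. I would re-run the Case~1 reduction using the genuine thresholds for $\word_{5/4}$: by Corollary~\ref{cor:5over4} the self-similar recurrence $w(N)=w\!\left(\frac{N-87541}{6}\right)+(\text{periodic})$ is valid once $N\ge 123061$, while by Proposition~\ref{pro: background} the five background columns are periodic already from $i\ge 1127$, i.e.\ from index $6762$---far below $r$. Iterating the map $N\mapsto\frac{N-87541}{6}$ and tracking its accumulated constant $-87541\cdot\frac{1-6^{-h}}{5}\to -17508.2$, the worst Case~1 sequences are those agreeing with $j_e$ to one level short ($h=e-1$); for these, landing in the periodic background forces periodicity only around $i\approx\frac{6762+17508.2}{6}\approx 4046$, rather than $i\approx\frac{r-s}{k-1}\approx 23428$. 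This shortens the indicator block from $23429$ to roughly $4046$ entries and, since each surviving tail now lives on a single background column, lets $G_{48}$ be replaced by a basis for the eventually period-dividing-$24$ tails; together with the $8$ stabilized self-similar generators a careful count gives $4078$.

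The main obstacle is making the preperiod bound exact. One must determine precisely how many iterated self-similar steps a Case~1 sequence can take before its residue forces it into the background, evaluate the accumulated additive constant for the extremal finite $e$ (not just its asymptotic value), and then combine the self-similar validity threshold $123061$ with the background periodicity threshold $6762$ to pin down the exact length of the indicator block and the exact period of the surviving tails---and hence the final count---to the claimed $4078$.
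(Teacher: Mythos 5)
Your proposal is correct and follows essentially the same route as the paper: drop the low-level kernel generators by observing that $q_e \le q_E$ for every $e$ (since $Q_e$ increases to its limit), shrink the indicator block by replacing the threshold $f_{e,h,j}(i)+s \ge r$ with the background-periodicity threshold $f_{e,h,j}(i)+s \ge \size{\p}-2$ from Proposition~\ref{pro: background} (which yields the same worst case $h=e-1$ and the same bound $i \ge 4046$ via the limit $4045+\tfrac{1}{30}$), and halve $G_{48}$ to a period-$24$ basis, giving $24+4046+8=4078$. The only part left implicit is the finite verification for the small values of $e$, which the paper also disposes of by direct check.
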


\begin{proof}
We use the value of the constants $k$, $\ell$, $r$, $s$, $u$, $d$, $E$, and $q_E$ from the previous proof.
Recall from the proof of Corollary~\ref{cor: bound on rank for w54} that the $k$-kernel of $\word_{5/4}$ is a subset of the $\Q$-vector space generated by~\eqref{eqn: large set of generators}.

First, we show that we can omit the generators
\[
\left\{ w(k^e i + j)_{i \geq 0} \colon 0 \leq e \leq E-1 \text{ and } 0\le j \le k^e-1 \right\}.
\]
Let $0 \leq e \leq E-1$, let $0\le j \leq k^e - 1$ such that $j \neq j_e$, and let $h$ be maximal such that $j \equiv j_e \mod k^h$.
As stated, Lemma~\ref{lem:j-je-mod-powers-k} applies for large $e$, but we show that we can apply the end of the proof to small $e$.
For the word $\word_{5/4}$, $(Q_e)_{e \geq 0}$ approaches its limit from below:
\[
(q_e)_{0\le e\le E-1} = 5920, 20510, 22941, 23347, 23414, 23425, 23427.
\]
In particular, $q_e \le q_E = 23428$ for all $e\ge 0$.
Therefore, for all $i \ge q_E+1$, we can see from the end of the proof of Lemma~\ref{lem:j-je-mod-powers-k} that $f_{e, e, 0}(i) \geq 0$, so Equation~\eqref{eqn:h-applications} holds, namely
\[
	w(k^e i + j)
	= w(f_{e,h,j}(i)+s) + \sum_{t=0}^{h-1} d(f_{e,t+1,j}(i)).
\]
Furthermore, from Case~1 in the proof of Theorem~\ref{thm:regularity}, $i \ge q_E+1$ also implies that Equation~\eqref{eqn: one more time} holds, namely
\[
	w(k^e i + j)
	= u(f_{e,h,j}(i)+s-r) + \sum_{t=0}^{h-1} d(f_{e,t+1,j}(i)).
\]
Thus $w(k^e i + j)_{i \geq 0}$ belongs to the $\Q$-vector space generated by $G_{k \ell} \cup H_{q_E + 1}$.

Additionally, we just need half the generators in $G_{k \ell}$ since $u$ agrees with $\tau(\varphi(0_0 0_1 0_2 0_3)^2)^\omega$ except on positions congruent to $5$ modulo $6$.
Let $G_{24}$ be the standard basis for periodic sequences with period length $24$.
Since $\ell$ divides $24$, each sequence $d(f_{e,h,j} (i))_{i\ge 0}$ belongs to the $\Q$-vector space generated by $G_{24}$.

Finally, we show that we do not need all generators in $H_{q_E + 1}$.
Recall that $H_{q_E + 1}$ was constructed with $q_E + 1$ generators since, for all $0 \leq j \leq k^e - 1$ such that $j \neq j_e$, the sequence $w(k^e i + j)_{i\ge 0}$ is eventually periodic with preperiod length at most $q_E + 1$.
For all $i \geq q_E +1$ we can apply Corollary~\ref{cor:5over4} $h$ times to $w(k^e i + j)$, where $h$ is maximal such that $j \equiv j_e \mod k^h$, followed by the case $m\neq k-1$ of Theorem~\ref{thm:regularity}.
We show that we can lower the bound on $i$ by using Proposition~\ref{pro: background} instead of the case $m\neq k-1$ of Theorem~\ref{thm:regularity}; namely, for all $i \geq 4046$, we can apply Corollary~\ref{cor:5over4} $h$ times to $w(k^e i + j)$, followed by Proposition~\ref{pro: background}.
This will imply that we can replace $H_{q_E + 1}$ with $H_{4046}$.
We consider large $e$ and small $e$ separately.

Let $e \geq E$, and let $0 \leq j \leq k^e - 1$ such that $j \neq j_e$.
Let $h$ be maximal such that $j \equiv j_e \mod k^h$.
In the proof of Lemma~\ref{lem:j-je-mod-powers-k}, we were able to apply Corollary~\ref{cor:5over4} $h$ times to $w(k^e i + j)$ by choosing $i$ so that $f_{e,e,0}(i) \geq 0$.
Let $i \geq 4046$.
Then
\[
	i
	> 4045 + \frac{1}{30}
	\ge \frac{k^{e - 1} - 1}{k^e(k-1)} (r - s + k -1) + \frac{s}{k^e} + \frac{\size{\p} - 2 - s}{k}
\]
since the right side of the previous inequality approaches $4045 + \frac{1}{30}$ from below as $e$ gets large.
By definition of $f$, this implies $f_{e, e - 1, 0}(i) + s \geq \size{\p} - 2$.
Since $\size{\p} - 2-s>0$, this also implies $f_{e, e - 1, 0}(i) \geq 0$.
Since $j \neq j_e$ in our current case, we have $h \neq e$, and therefore $f_{e, e - 1, 0}(i) \geq 0$ is sufficient to apply Corollary~\ref{cor:5over4} $h$ times.
This gives
\[
	w(k^e i + j)
	= w(f_{e,h,j}(i)+s) + \sum_{t=0}^{h-1} d(f_{e,t+1,j}(i))
\]
as in Case~1 of the proof of Theorem~\ref{thm:regularity}.
Since $h$ is maximal, we cannot apply Corollary~\ref{cor:5over4} an additional time.
Instead, we apply Proposition~\ref{pro: background} to $w(f_{e,h,j}(i)+s)$, since $f_{e, h, j}(i) + s \geq \size{\p} - 2$ (as opposed to $f_{e, h, j}(i) + s \geq r$ as in Case~1).
This gives
\[
	w(k^e i + j)
	= u(f_{e,h,j}(i)+s-r) + \sum_{t=0}^{h-1} d(f_{e,t+1,j}(i)),
\]
so $w(k^e i + j)_{i\ge 4046}$ is a periodic sequence with period length at most $k \ell$.
Therefore $w(k^e i + j)_{i\ge 0}$ is an eventually periodic sequence with preperiod length at most $4046$.
Let $H_{4046} = \{ v_m(i)_{i\ge 0} \colon 0\le m \le 4045\}$, where $v_m(i)_{i\ge 0}$ is the sequence defined by $v_m(m)=1$ and $v_m(i)=0$ for all $i\neq m$.

For $0 \leq e \leq E - 1$, it is sufficient to check that $f_{e, h, j}(i) + s \geq \size{\p} - 2$, since the rest of the argument is the same as the case when $e \geq E$.
A finite check shows that this inequality holds for all $i \geq 4046$.

We have shown that the $6$-kernel of $\word_{5/4}$ is a subset of the $\Q$-vector space generated by
\[
	G_{24} \cup H_{4046} \cup \left\{ w(k^e i + j_e)_{i \geq 0} \colon 0 \leq e \leq E \right\},
\]
which has dimension at most $24 + 4046 + (E + 1) = 4078$.
\end{proof}

Finally, we prove Theorem~\ref{thm: best bound on rank for w54}.

\begin{proof}[Proof of Theorem~\ref{thm: best bound on rank for w54}]
We continue to use the constants $k = 6$, $\ell = 8$, $r = 123056$, $s = 5920$, $E= 7$, and so on.
Let $j_e$ be defined as in the proof of Theorem~\ref{thm:regularity}.
Let
\[
	V = \left\langle w(k^e i + j)_{i \geq 0} \colon 0 \leq e \leq E \text{ and } 0\le j \le k^e-1 \text{ with } j\neq j_e \right\rangle
\]
be the vector space generated by the kernel sequences $w(k^e i + j)_{i \geq 0}$ with $0 \leq e \leq E$ and $j\neq j_e$, and let
\[
	W = \left\langle w(k^e i + j_e)_{i \geq 0} \colon 0 \leq e \leq E\right\rangle.
\]
We show that $\dim V = 179$, $\dim W = 8$, and the vector space generated by the $k$-kernel of $w(i)_{i \geq 0}$ is the direct sum $V \oplus W \oplus \langle w(k^{E+1} i + j_E)_{i \geq 0}\rangle$, which has dimension $188$.

Let $e\ge 0$ and $0\le j \le k^e-1$ such that $j\neq j_e$.
We claim that the first $4050$ terms of the sequence $w(k^e i + j)_{i\ge 0}$ determine it uniquely.
From the proof of Proposition~\ref{pro: better bound on rank for w54}, $w(k^e i + j)_{i\ge 4046}$ is periodic.
Since we used Proposition~\ref{pro: background} to obtain the bound $4046$, the period length of $w(k^e i + j)_{i\ge 4046}$ is a divisor of $4$.
Therefore the first $4046+4$ terms determine it uniquely.

In particular, the first $4050$ terms of each generator of $V$ determine the sequence uniquely.
Therefore we obtain the dimension of $V$ by row-reducing the matrix containing the first $4050$ terms of each sequence $w(k^e i + j)_{i\ge 0}$ with $0 \leq e \leq E$, $0\le j \le k^e-1$, and $j\neq j_e$.
This gives dimension $179$ and took about a half hour using $8$ parallel threads.

In computing the dimension of $V$, we computed a basis of $V$ consisting of kernel sequences.
We claim that all periodic sequences with period length $4$ and the sequence $1,0,0,\ldots$ belong to $V$.
For $m\in\{0,1,2,3\}$, define $g_m(i) = 1$ if $i \equiv m \mod 4$ and $g_m(i) = 0$.
Let $G_4=\{g_m(i)_{i \geq 0}\colon 0\le m\le 3\}$, and let $H_1 = \{v_0(i)_{i \geq 0}\}$, where $v_0(i)_{i \geq 0}$ is the eventually $0$ sequence as defined in the proof of Proposition~\ref{pro: better bound on rank for w54}.
Each sequence in $G_4 \cup H_1$ is eventually periodic with preperiod length $\leq 4046$ (in fact $\leq 1$) and period length dividing $4$, so row-reducing a $184$-row matrix using the first $4050$ terms shows that $G_4\cup H_1 \subset V$.
This computation took less than a second and finds the relations
\begin{align*}
	-w(6 i) + w(36 i) + w(6 i + 2) - 2 w(6 i + 4) + 2 g_0(i) &= 0 \\
	-w(6 i) - w(36 i) + w(6 i + 2) + 2 g_1(i) &= 0 \\
	-w(6 i) + w(36 i) - w(6 i + 2) + 2 g_2(i) &= 0 \\
	w(6 i) - w(36 i) - w(6 i + 2) + 2 g_3(i) &= 0 \\
	-w(216 i + 23) - w(216 i + 29) + w(216 i + 35) + w(216 i + 41) + v_0(i) &= 0.
\end{align*}
In particular, since $G_4 \subset V$, the constant sequence $(1)_{i \geq 0}$ is an element of $V$.

We show that $w(k^e i + j_e)_{i \geq 0} \notin V$ for $0 \leq e \leq E$, that $w(k^{E+1} i + j_E)_{i \geq 0} \notin V$, and that all $188$ sequences are linearly independent of each other, by using the first $4050$ terms of the $179$ basis elements of $V$ and row-reducing the appropriate $188$-row matrix.
This computation took less than a second.
In particular, $\dim W = E + 1 = 8$.

By definition, the kernel sequences $w(k^e i + j)_{i \geq 0}$ for all $0\le e \le E$ belong to $V \oplus W $.
We have proved that this vector space has dimension $187$.
It remains to show that for $e \geq E + 1$ all the kernel sequences $w(k^e i + j)_{i \geq 0}$ belong to $V \oplus W \oplus \langle w(k^{E+1} i + j_E)_{i \geq 0}\rangle$.
First we consider $j = j_e$.
We show that
\[
w(k^e i + j_e) = w(k^E i + j_E) + 2 (e - E).
\]
for all $i\ge 0$.
From~\eqref{eqn: je difference 1} and~\eqref{eqn: je difference 2}, we know that
\begin{equation}\label{eqn: je difference 3}
w(k^e i + j_e)
=
w\!\left( f_{e,h_e,j_e}(i) + s \right)
+ \sum_{t=0}^{h_e-1} d(f_{e,t+1, j_e} (i)),
\end{equation}
for all $i\ge 0$.
We next compute $h_e$ and $f_{e,t+1, j_e} (i) \bmod{\ell}$ for $t\in\{0,1,\ldots,h_e-1\}$.
Since $q_e=q_E=23428$ for all $e\ge E$, we have
\[
h_e = e + \floor{ \log_k \left( 1 - \frac{q_E \, (k-1)}{r-s+k-1} \right) } = e - E.
\]
By definition, we have
\begin{align*}
f_{e,t+1, j_e} (i)
&= k^{e-t-1}i+ \frac{k^{e-t-1}-1}{k-1} (r-s+k-1) -q_e k^{e-t-1}.
\end{align*}
In Equation~\eqref{eqn: je difference 3}, $t+1\le h_e = e-E$, so $e-t-1 \ge E=7$.
Therefore $k^{e-t-1}\equiv 0 \mod{\ell}$, so we get
\[
f_{e,t+1, j_e} (i)
\equiv \frac{-1}{k-1} (r-s+k-1)
\equiv 7 \mod{\ell}
\]
for all $i\ge 0$.
Since $d(7)=2$, the sum on the right side of \eqref{eqn: je difference 3} is
\[
\sum_{t=0}^{h_e-1} d(f_{e,t+1, j_e} (i))
= \sum_{t=0}^{h_e-1} d(7)
= 2 (e-E).
\]
Thus Equation~\eqref{eqn: je difference 3} becomes
\begin{align*}
w(k^e i + j_e)
&= w\!\left( f_{e,h_e,j_e}(i) + s \right) + 2 (e-E) \\
&= w\!\left(k^{e-h_e}i + \frac{k^{e-h_e}-1}{k-1} (r-s+k-1) -q_E k^{e-h_e} + s \right) + 2 (e-E) \\
&= w\!\left(k^E i + \frac{k^E-1}{k-1} (r-s+k-1) + s -q_E k^{E} \right) + 2 (e-E) \\
&= w\!\left(k^E i + j_E \right) + 2 (e-E)
\end{align*}
as desired.
Therefore $w(k^ei+j_e)_{i\ge 0}$ is a linear combination of the kernel sequence $w(k^Ei+j_E)_{i\ge 0} \in W$ and the constant sequence $(1)_{i \geq 0} \in V$.

Now let $0 \leq j \leq k^e - 1$ such that $j \neq j_e$.
We use induction on $e$, so assume that $w(k^{e - 1} i + j')_{i \geq 0} \in V \oplus W \oplus \langle w(k^{E+1} i + j_E)_{i \geq 0}\rangle$ for all $0 \leq j' \leq k^{e - 1} - 1$.
If $j \nequiv 1 \mod k$, then Proposition~\ref{pro: background} implies that $w(k^e i + j)_{i\ge 1}$ is periodic with period length dividing $4$.
Therefore $w(k^e i + j)_{i\ge 0}$ is a linear combination of sequences in $G_4 \cup H_1 \subset V$.
Now assume $j \equiv 1 \mod k$.
From Equations~\eqref{eqn:special case with f} and~\eqref{eqn:h-applications}, we have
\[
w(k^ei+j)
= w(f_{e,0,j}(i)+s)
= w(f_{e,1,j}(i)+s) + d(f_{e,1,j}(i)).
\]
We have
\[
f_{e,1,j}(i) =k^{e-1} i + \frac{j - s}{k} - \frac{r - s + k -1}{k}
	= k^{e-1} i + \frac{j- r - k +1}{k}.
\]
Note that $j- r - k +1\equiv 0 \mod{k}$, so $f_{e,1,j}(i)$ is an integer.
Note also that $f_{e, 1, j}(i) \equiv \frac{j- r - k +1}{k} \mod \ell$, so $d(f_{e,1,j}(i))_{i \geq 0}$ is a constant sequence.
We consider two subcases depending on whether $j<r-ks+k-1=87541$ or not.

If $r-ks+k-1 \le j \le k^e-1$, then $\frac{j- r - k +1}{k} + s \in \{0,1,\ldots,k^{e-1}-1\}$, so the sequence $w(f_{e,1,j}(i)+s)_{i\ge 0}$ that appears in
\[
	w(k^ei+j)
	= w(f_{e,1,j}(i)+s) + d(f_{e,1,j}(i))
\]
is the kernel sequence $w(k^{e-1} i + \frac{j- r - k +1}{k}+s)_{i\ge 0}$.
Therefore $w(k^ei+j) \in V \oplus W \oplus \langle w(k^{E+1} i + j_E)_{i \geq 0} \rangle$ by the induction hypothesis.

If $0 \le j < r-ks+k-1$, then $\frac{j- r - k +1}{k} + s < 0 $, so the sequence $w(k^{e-1} i + \tfrac{j- r - k +1}{k} + s)_{i \geq 0}$ that appears in
\[
	w(k^ei+j)
	= w(k^{e-1} i + \tfrac{j- r - k +1}{k} + s) + d(f_{e,1,j}(i)).
\]
is not necessarily a kernel sequence.
We checked above that $w(k^{E+1} i + j_E)_{i \geq 0}$ is linearly independent of the sequences in $V \oplus W$.
For all other $j$, we show that $w(k^ei+j)_{i \geq 0} \in V \oplus W$.
(We do this directly without using the inductive hypothesis.)

For each $e\in\{8,9,\ldots,12\}$ and each $0 \leq j \leq k^e - 1$ such that $(e,j)\neq (E+1,j_E)$, we check that $w(k^ei+j)_{i \geq 0} \in V \oplus W \oplus \langle w(k^{E+1} i + j_E)_{i \geq 0}\rangle$ by row-reducing a $189$-row matrix using the first $4050$ terms.
This computation took about $40$ minutes using $8$ parallel threads.

Now assume $e\ge 13$.
We show that each $w(k^ei+j)_{i \geq 1}$ is a constant sequence.
Since $j_{E+1}=318415 > r-ks+k-1 > j$, $j-j_e$ is not divisible by $k^{E+1}$.
Therefore $h\le 7$.
As in Proposition~\ref{pro: better bound on rank for w54}, to apply Corollary~\ref{cor:5over4} $h$ times and Proposition~\ref{pro: background}, we need $f_{e, t, j}(i)\ge 0$ for all $t\in\{1,\ldots,h\}$ and $f_{e, h, j}(i) + s \geq \size{\p} - 2$.
We show that this happens for all $i\ge 1$.
Since $\size{\p} - 2 -s >0$, it is enough to check $f_{e, h, j}(i) \geq \size{\p} - 2 -s$ for all $i\ge 1$.
We have
\begin{align*}
i> \tfrac{6794085199}{13060694016} &= \frac{s}{k^{13}} + \frac{k^7-1}{k^{13}(k-1)} (r - s + k -1) + \frac{k^7}{k^{13}} (\size{\p} - 2 - s) \\
&\ge \frac{s-j}{k^e} + \frac{k^h-1}{k^e(k-1)} (r - s + k -1) + \frac{k^h}{k^e} (\size{\p} - 2 - s),
\end{align*}
which implies
\[
f_{e,h,j}(i)
= k^{e-h} i + \frac{j-s}{k^h} -\frac{k^h-1}{k^h(k-1)} (r - s + k -1)
\ge \size{\p} - 2 - s.
\]
So
\[
w(k^e i + j)
= w(f_{e,h,j}(i)+s) + \sum_{t=0}^{h-1} d(f_{e,t+1,j}(i))
\]
holds for all $i\ge 1$.
We show that $f_{e,t+1,j}(i) \bmod 24$ is independent of $i$ for all $t\in\{0,1,\ldots,h-1\}$; this will imply that $w(f_{e,h,j}(i)+s)$ is constant by Proposition~\ref{pro: background} and that $d(f_{e,t+1,j}(i))$ is constant since $\ell$ divides $24$.
We have
\begin{align*}
f_{e,t+1,j}(i)
&= k^{e-t-1} i + \frac{j-s}{k^{t+1}} -\frac{k^{t+1}-1}{k^h(k-1)} (r - s + k -1) \\
&\equiv \frac{j-s}{k^{t+1}} -\frac{k^{t+1}-1}{k^h(k-1)} (r - s + k -1) \mod{24}
\end{align*}
since $e-t-1\ge 6$.
Therefore $w(k^ei+j)_{i \geq 1}$ is a constant sequence, so $w(k^ei+j)_{i \geq 0}$ is a linear combination of sequences in $G_4 \cup H_1 \subset V$.
\end{proof}

\section{Open questions}\label{sec:open-questions}

We end this paper with several open questions.
Regarding finite alphabets, the structure of the lexicographically least square-free infinite word on $\{0,1,2\}$ is still unknown~\cite[Open Problem~2 in Section~1.10]{Allouche--Shallit-2003}.

The following table presents the known information about $\word_{a/b}$ for simple rational numbers $a/b$.
\[
\begin{array}{cccccccr}
a/b & k & d & r' & s & \text{rank} & \text{result} & \text{sequence} \\
\hline
a \in \N_{\ge 2} & a & 1 & 0 & 0 & 2 & \text{\cite{Guay--Shallit}} & \text{e.g.\ \seq{A007814}} \\
3/2 & 6 & 2 & 0 & 0 & 3 & \text{\cite{Rowland--Shallit}} & \text{\seq{A269518}} \\
4/3 & 56 & 1,2 & 73 & 0 & 4 & \text{\cite[Theorem~7]{Pudwell--Rowland}} & \text{\seq{A277142}} \\
5/3 & 7 & 1 & 0 & 0 &  2 & \text{\cite[Theorem~1]{Pudwell--Rowland}} & \text{\seq{A277143}} \\
5/4 & 6 & 1,2,3 & 123061 & 5920 & 188 & \text{Theorem~\ref{thm: best bound on rank for w54}} & \text{\seq{A277144}} \\
7/4 & 50847 & 2 & 0 & 0 & 2 & \text{\cite[Theorem~4]{Pudwell--Rowland}} & \text{\seq{A277145}} \\
6/5 & 1001 & 3 & 30949 & 0 & 33 & \text{\cite[Theorem~5]{Pudwell--Rowland}} & \text{\seq{A277146}} \\
7/5 &  &  &  &  &  & \text{\cite[Conjecture~6]{Pudwell--Rowland}} & \text{\seq{A277147}} \\
8/5 & 733 & 2 & 0 & 0 & 2 & \text{\cite[Theorem~3]{Pudwell--Rowland}} & \text{\seq{A277148}} \\
9/5 & 13 & 1 & 0 & 0 & 2 & \text{\cite[Theorem~2]{Pudwell--Rowland}} & \text{\seq{A277149}} \\
7/6 &  &  &  & &  & \text{Conjecture~\ref{conj: unknown words}} & \text{\seq{A277150}} \\
11/6 &  &  &  &  &  & \text{No conjecture} & \text{\seq{A277151}} \\
8/7 &  &  &  &  &  & \text{Conjecture~\ref{conj: unknown words}} & \text{\seq{A277152}} \\
9/7 &  &  &  &  &  & \text{Conjecture~\ref{conj: unknown words}} & \text{\seq{A277153}} \\
10/7 &  &  &  &  &  & \text{Conjecture~\ref{conj: unknown words}} & \text{\seq{A277154}} \\
11/7 & 27 & 1 & 0 & 0 & 2 & \text{\cite[Theorem~52]{Pudwell--Rowland}} & \text{\seq{A277155}} \\
12/7 & 17  & 1 & 0 & 0 & 2 & \text{\cite[Theorem~16]{Pudwell--Rowland}} & \text{\seq{A277156}} \\
13/7 & 19  & 1 & 0 & 0 & 2 & \text{\cite[Theorem~16]{Pudwell--Rowland}} & \text{\seq{A277157}} \\
9/8 &  &  &  &  &  & \text{No conjecture} & \text{\seq{A277158}} \\
11/8 &  &  &  &  &  & \text{No conjecture} & \text{\seq{A277159}} \\
13/8 & 33 & 1 & 0 & 0 & 2 & \text{\cite[Theorem~52]{Pudwell--Rowland}} & \text{\seq{A277160}} \\
15/8 &  &  &  &  &  & \text{Conjecture~\ref{conj: unknown words}} & \text{\seq{A277161}}
\end{array}
\]
Here $k$ is the smallest value for which the sequence of letters in $\word_{a/b}$ is $k$-regular, and we include the values of $d$ which arise in the morphism.
The values of $r'$ and $s$ are as in Recurrence~\eqref{eqn:double transient general form} and are chosen to be minimal.
The rank of each sequence for which $s=0$ can be determined from the recurrence it satisfies.
This table emphasizes the extent to which $\word_{5/4}$ is more complicated than other words.

Previous work on $\word_{a/b}$ has suggested that the structure of the word $\word_{a/b}$ is generally more complicated for even denominators $b$ than for odd $b$.
This trend is supported by the structure of $\word_{5/4}$.

An obvious question is whether the proof strategy for $\word_{5/4}$ can be applied to other words $\word_{a/b}$.
It seems likely that it can, but we leave this an open question.
The major difficulties are that identifying the structure of $\word_{a/b}$ potentially requires computing a huge number of terms and that we do not have a systematic way of guessing the structure even if we have many terms.

Ordered by denominator, the simplest words whose structure is not yet known are $\word_{7/5}$, $\word_{7/6}$, and $\word_{11/6}$.
Pudwell and the first-named author~\cite[Conjecture~6]{Pudwell--Rowland} conjectured that the letters of $\word_{7/5}$ satisfy
\[
	w(80874 i + 173978) = w(i) + 1
\]
for all $i \geq 0$.
Here we conjecture the structure of five additional words.

\begin{conjecture}\label{conj: unknown words}
For all $i \geq 0$, the letters of $\word_{7/6}$ satisfy
\[
	w(41190 i + 41201) = w(i) + 3.
\]
For all $i \geq 0$, the letters of $\word_{8/7}$ satisfy
\[
	w(340 i + 52670) = w(i) + 3.
\]
For all $i \geq 0$, the letters of $\word_{9/7}$ satisfy
\[
	w(44 i + 2701 ) = w(i) + 2.
\]
For all $i \geq 0$, the letters of $\word_{10/7}$ satisfy
\[
	w(26 i + 428)
	= w(i) +
	\begin{cases}
	0 & \text{if } i=0 \\
	1 & \text{if } i\neq 0.\\
	\end{cases} 	
\]
For all $i \geq 0$, the letters of $\word_{15/8}$ satisfy
\[
	w(22763 i + 22850) = w(i) + 2.
\]
\end{conjecture}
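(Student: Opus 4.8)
The plan is to prove each conjectured recurrence by first upgrading it to a full structural description of $\word_{a/b}$ in the style of Theorem~\ref{thm:main}, and then deriving the recurrence as a consequence, exactly as Corollary~\ref{cor:5over4} follows from Theorem~\ref{thm:main}. In each of the five cases the conjectured recurrence already has the shape $w(ki + r') = w(i) + d$ with $s = 0$, so these words do not exhibit the second transient that complicates $\word_{5/4}$; the coefficient of $i$ on the left (namely $41190, 340, 44, 26$, and $22763$ respectively) is the uniformity $k$ of the morphism we must produce, and the increment sequences are (eventually) periodic, which dictates the number of letter-types.

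First I would guess, for each word, a $k$-uniform morphism $\varphi$ on an enriched alphabet of type-decorated letters $n_j$, a coding $\tau$ collapsing subscripts, a finite prefix $\p$, and a seed $\z$ such that $\word_{a/b} = \p\,\tau(\varphi(\z)\varphi^2(\z)\cdots)$. As in Notation~\ref{not:phi}, the subscripts should increase by $1$ modulo the number of types from one position to the next and across image boundaries, the number of types should equal the period length of the increment sequence, and the last column of each image $\varphi(n_j)$ should carry the letter $n+d$, so that the self-similar column reproduces the recurrence. Because $s = 0$ here, one expects the structure to be genuinely morphic, that is, an image under a coding of a fixed point, which is simpler than the doubly-transient situation of $\word_{5/4}$.

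With a candidate structure in hand, the proof splits into the two obligations used for $\word_{5/4}$. For power-freeness I would first test whether $\varphi$ preserves $a/b$-power-freeness; if it does, the argument is shorter, and if it does not, I would reproduce the pre-$a/b$-power-freeness machinery of Sections~\ref{sec:length}--\ref{sec:5/4-power-freeness}: show that $\varphi$ locates words of some bounded length (the analogue of Lemma~\ref{lem:Phi-Locates-6-Length}), deduce that $a/b$-powers in images have length divisible by $k$, characterize subscript-increasing pre-$a/b$-powers (the analogue of Lemma~\ref{lem:pre-5-4-power}), and then prove $\z\varphi(\z)\cdots$ is pre-$a/b$-power-free by induction on the number of applications of $\varphi$ (the analogue of Theorem~\ref{thm:s-pre-5-4-power-free}), handling overlaps across the $\z/\varphi(\z)$ boundary by a finite computation. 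Lexicographic-leastness would then follow by the position-induction of Theorem~\ref{thm:5over4-lex-least}: a sufficiently long computed prefix serves as the base case, and for later positions one shows that decreasing a background letter creates a short $a/b$-power while decreasing a self-similar letter pulls a power back through $\varphi$ via the induction hypothesis. Once both obligations hold, the structural theorem yields the recurrence, and Theorem~\ref{thm:regularity} with $s = 0$ gives $k$-regularity together with a rank bound.

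The hard part is not this machinery but the discovery step that precedes it. As the authors emphasize, there is no systematic way to guess the morphism even from many terms, and the cost grows severely with $k$: for $\word_{7/6}$ and $\word_{15/8}$ the modulus is in the tens of thousands, so the set of letter-types is large, the prefix $\p$ and seed $\z$ are long, and the finite verifications dominate—the base case of the pre-power-freeness induction (which already took roughly six hours for $\word_{5/4}$) and the base case of the lexicographic-leastness induction (which required a prefix of length $331040$) would both balloon. Even locating the single non-eventually-periodic column on which the self-similarity lives may demand computing far more terms than are presently available. Thus, although the proof architecture is already in place, carrying it out for these words is primarily a matter of surmounting the experimental and computational barrier rather than introducing new theory.
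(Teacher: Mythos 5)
This statement is Conjecture~\ref{conj: unknown words}, which the paper explicitly leaves \emph{unproved}: it appears in the open-questions section, is supported only by experimental evidence, and the authors state outright that they ``do not have a systematic way of guessing the structure even if we have many terms.'' So there is no proof in the paper to compare against, and your proposal does not supply one either. What you have written is an accurate and well-informed description of the proof \emph{architecture} that the paper uses for $\word_{5/4}$ (guess $\p$, $\z$, a $k$-uniform $\varphi$ and a coding $\tau$; prove pre-power-freeness by induction with a computational base case; prove lexicographic-leastness by position induction; read off the recurrence), together with a candid admission that the decisive steps --- actually exhibiting the morphisms, seeds, and prefixes for each of the five words and carrying out the finite verifications --- have not been done. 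A plan whose every concrete ingredient is deferred is not a proof, so the gap here is total rather than local: no object is constructed and no claim is verified for any of the five words.

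Two smaller points worth noting. First, your remark that $s=0$ makes these words ``genuinely morphic'' is consistent with the paper's discussion, but the conjectured recurrence for $\word_{10/7}$ has an increment that differs at $i=0$, so its $d$ is only \emph{eventually} constant; Theorem~\ref{thm:regularity} as stated requires $d$ periodic, so that case would need a slight extension or a shift of index before the regularity machinery applies. Second, the moduli $41190$ and $22763$ in the recurrences are the putative values of $k$, but nothing in the conjecture guarantees that a $k$-uniform morphism with those uniformities exists or that the number of letter types is manageable; your plan assumes the discovery step succeeds, which is precisely the part the authors identify as the obstruction. In short, your proposal is a faithful restatement of the paper's own program for attacking these words, not a resolution of the conjecture.
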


However, we still do not have a conjecture for $\word_{11/6}$ and $\word_{11/8}$.
Based on $\word_{3/2}$ and $\word_{5/4}$, one might guess that $\word_{7/6}$ and $\word_{9/8}$ have similar structure.
In Conjecture~\ref{conj: unknown words}, the recurrence for $\word_{7/6}$ has a single value of $d$ and $s=0$, so it does not seem to be part of the same family.
For $\word_{9/8}$, we do not have a conjectural recurrence, but experiments suggest that $k=156$ is promising.

Additionally, there are other natural notions of pattern avoidance for fractional powers on $\N$.
For $a/b > 1$, we define two additional words.
Let $\word_{\geq a/b}$ be the lexicographically least infinite word on $\N$ avoiding $p/q$-powers for all $p/q \geq a/b$, and let $\word_{> a/b}$ be the lexicographically least infinite word on $\N$ avoiding $p/q$-powers for all $p/q > a/b$.

Guay-Paquet and Shallit~\cite{Guay--Shallit} asked whether $\word_{\geq 5/2}$ is in fact a word on $\{0,1,2\}$.
This question is still open.
Pudwell and Rowland~\cite[Theorem~71]{Pudwell--Rowland} proved that $\word_{27/23}$ is a word on the finite alphabet $\{0,1,2\}$, showing that there exist lexicographically least pattern-avoiding words defined on $\N$ that only use a finite alphabet.

Pudwell and Rowland~\cite[Conjecture~13]{Pudwell--Rowland} conjectured that
\[
	\word_{\geq 4/3}(336i + 1666) = \word_{4/3}(56i + 17) + 4
\]
for all $i \geq 0$.
This suggests that the structure of $\word_{\geq a/b}$ is slightly more complicated than that of $\word_{a/b}$.
However, not much is known about $\word_{\geq a/b}$.
Guay--Paquet and Shallit \cite{Guay--Shallit} showed that the overlap-free word~\citeseq{A161371}
\[
\word_{>2} = 001001100100200100110010021001002001001100\cdots
\]
is generated by a non-uniform morphism, which leads us to believe that the structure of $\word_{> a/b}$ is even more complicated than that of $\word_{a/b}$.

The biggest question remains the following.
For each $a/b$, is there an integer $k\ge 2$ such that the sequence of letters in $\word_{a/b}$ is $k$-regular?
Similarly, one could ask about $\word_{\ge a/b}$.
Finally, which words $\word_{>a/b}$ are $k$-regular for some $k$?


\begin{thebibliography}{99}

\bibitem{Allouche--Currie--Shallit}
J.-P. Allouche, J. Currie, J. Shallit, Extremal infinite overlap-free binary words, \textit{Electron. J. Combin.} \textbf{5} (1998), Research paper 27, 11 pages.

\bibitem{Allouche--Shallit-1992}
J.-P. Allouche, J. Shallit, The ring of $k$-regular sequences, \textit{Theoret. Comput. Sci.} \textbf{98} (1992) 163--197.

\bibitem{Allouche--Shallit-2003}
J.-P. Allouche, J. Shallit, \textit{Automatic Sequences: Theory, Applications, Generalizations}, Cambridge University Press, Cambridge, 2003.

\bibitem{Bell} J. P. Bell, A generalization of Cobham's theorem for regular sequences, \textit{S\'em. Lothar. Combin.} \textbf{54A} (2005/2007), Article B54Ap., 15 pages.

\bibitem{Berstel}
J. Berstel, Axel Thue's Papers on Repetitions in Words: A Translation, \textit{Publications du LaCIM} \textbf{20}, Universit\'e du Qu\'ebec \`a Montr\'eal, 1995.

\bibitem{Berstel--Perrin}
J. Berstel, D. Perrin, The origins of combinatorics on words, \textit{European J. Combin.} \textbf{28} (2007), 996--1022.

\bibitem{Dejean}
F. Dejean. Sur un th\'eor\`eme de Thue, \textit{J. Combinatorial Theory Ser. A} \textbf{13} (1972), 90--99.

\bibitem{Guay--Shallit}
M. Guay-Paquet, J. Shallit, Avoiding squares and overlaps over the natural numbers, \textit{Discrete Math.} \textbf{309} (2009), no. 21, 6245--6254.

\bibitem{Pudwell--Rowland}
L. Pudwell, E. Rowland, Avoiding fractional powers over the natural numbers, \textit{Electron. J. Combin.} \textbf{25} (2018), no. 2, Paper 2.27, 46 pages.

\bibitem{Rowland--Shallit}
E. Rowland, J. Shallit, Avoiding $3/2$-powers over the natural numbers, \textit{Discrete Math.} \textbf{312} (2012), no. 6, 1282--1288.

\bibitem{OEIS}
N. Sloane et al.,
The On-Line Encyclopedia of Integer Sequences,
\url{http://oeis.org}.

\bibitem{Thue-1906}
A. Thue, \"Uber unendliche Zeichenreihen, \textit{Norske Vid. Selsk. Skr. I Math-Nat. Kl.} \textbf{7} (1906), 1--22.

\bibitem{Thue-1912}
A. Thue, \"Uber die gegenseitige Loge gleicher Teile gewisser Zeichenreihen, \textit{Norske Vid. Selsk. Skr. I Math-Nat. Kl. Chris.} \textbf{1} (1912), 1--67.

\end{thebibliography}
\end{document}